\documentclass[11pt]{article}
\title{\textsc{Largest integral simplices with one interior integral point: Solution
of Hensley's conjecture and related results}}
\author{Gennadiy Averkov\footnote{Institute of Mathematical Optimization, Faculty of Mathematics, University of Magdeburg, Universit\"atsplatz 2, 39106 Magdeburg, Germany.
Emails: averkov@ovgu.de, jan.kruempelmann@ovgu.de} \qquad Jan Kr\"umpelmann$^*$ \qquad Benjamin Nill\footnote{Department of Mathematics, Stockholm University, 10691 Stockholm, Sweden. Email: nill@math.su.se}}
\usepackage{amsthm,amsmath,amssymb,enumerate,graphicx,graphpap,empheq}
\usepackage{srcltx}
\usepackage{paralist}
\usepackage{color}
\usepackage{soul}
\usepackage{bbm} 

\oddsidemargin=0pt
\headheight=0pt
\textheight=620pt
\marginparsep=10pt
\footskip=30pt
\hoffset=20pt
\paperwidth=614pt
\topmargin=0pt
\headsep=0pt
\textwidth=419pt
\marginparwidth=49pt
\marginparpush=5pt
\voffset=0pt
\paperheight=794pt

\setlength\parindent{0pt} % sets indent to zero
\setlength{\parskip}{10pt} % changes vertical space between paragraphs

\setuldepth{a}

\newcommand{\rcg}{\operatorname{rcg}}
\newcommand{\cF}{\mathcal{F}}
\newcommand{\xquer}{y}

\newcommand{\ORD}{\operatorname{ORD}}
\newcommand{\PS}{\operatorname{PS}}
\newcommand{\rmcmd}[1]{\mathop{\mathrm{#1}}\nolimits}

\newcommand{\R}{\mathbb{R}}
\newcommand{\N}{\mathbb{N}}
\newcommand{\Z}{\mathbb{Z}}

\newcommand{\setcond}[2]{\left\{#1 \, : \, #2 \right\}}

\newcommand{\sprod}[2]{\left< #1 \, , \, #2 \right>}

\newcommand{\vol}{\rmcmd{vol}}

\newcommand{\modulo}[1]{\, (\operatorname{mod}\, #1)}
\newcommand{\ld}[1]{\, \operatorname{ld}(#1)\,}

\newcommand{\vertset}[1]{\, \operatorname{vert}(#1)}
\newcommand{\conv}{\operatorname{conv}}
\newcommand{\intr}[1]{\operatorname{int}(#1)}
\newcommand{\relintr}[1]{\operatorname{relint}(#1)}
\newcommand{\bd}[1]{\operatorname{bd}(#1)}
\newcommand{\relbd}[1]{\operatorname{relbd}(#1)}
\newcommand{\Chi}{\mathcal{X}}
\newcommand{\Y}{\mathcal{Y}^n}

\newcommand{\IK}{\operatorname{IK}}
\newcommand{\len}{\operatorname{len}}
\newcommand{\ca}{\operatorname{ca}}
\newcommand{\Q}{\mathbb{Q}}
\newcommand{\allone}{\mathbbm{1}}
\renewcommand{\P}{\mathbb{P}}
\newcommand{\lspan}{\operatorname{lin}}
\newcommand{\thmtitle}[1]{\textup{(#1.)}}
\newcommand{\Pd}[1]{\mathcal{P}^d({#1})}
\newcommand{\Sd}[1]{\mathcal{S}^d({#1})}
\newcommand{\Plm}{\mathcal{P}^d_{0, \operatorname{lmax}}}
\newcommand{\Pim}{\mathcal{P}^d_{0, \operatorname{imax}}}

\newtheorem{nn}{}[section]
\newtheorem{theorem}[nn]{Theorem}

\newtheorem{proposition}[nn]{Proposition}
\newtheorem{lemma}[nn]{Lemma}
\newtheorem{corollary}[nn]{Corollary}

\newcommand{\rescite}[2]{\textup{\cite[{#2}]{#1}}}
\newtheorem{claim}{Claim}[nn]

\theoremstyle{definition}
\newtheorem*{acknowledgments*}{Acknowledgments}
\newtheorem{remark}[nn]{Remark}

\numberwithin{equation}{section}

\begin{document}

\maketitle

\begin{abstract}
For each dimension $d$, $d$-dimensional integral
simplices with exactly one interior integral point have bounded
volume. This was first shown by Hensley. Explicit volume bounds were
determined by Hensley, Lagarias and Ziegler, Pikhurko, and Averkov. In this
paper we determine the exact upper volume bound for such simplices
and characterize the volume-maximizing simplices. We also
determine the sharp upper bound on the coefficient of asymmetry of an integral polytope with a single interior
integral point. This result confirms a conjecture of Hensley from 1983. Moreover, for an integral simplex with precisely one interior
integral point, we give bounds on the volumes of its faces, the barycentric coordinates of the interior integral point and its number of integral points. 
Furthermore, we prove a bound on the
lattice diameter of integral polytopes with a fixed number of interior integral points.
The presented results have applications in toric geometry and in integer optimization.
\end{abstract}

\newtheoremstyle{itsemicolon}{}{}{\mdseries\rmfamily}{}{\itshape}{:}{ }{}
\newtheoremstyle{itdot}{}{}{\mdseries\rmfamily}{}{\itshape}{:}{ }{}
\theoremstyle{itdot}
\newtheorem*{msc*}{2010 Mathematics Subject Classification} 

\begin{msc*}
	Primary: 52B20, Secondary: 14M25, 90C11.
\end{msc*}

\newtheorem*{keywords*}{Keywords}

\begin{keywords*}
	barycentric coordinates; simplex; toric Fano variety; volume; integral polytope; lattice-free polytope; lattice diameter; Sylvester sequence
\end{keywords*}

\section{Introduction}\label{sec:introduction}

\subsection*{Background information}

The main objective of this manuscript is to provide sharp upper bounds on the size of integral polytopes with exactly one interior integral point.
Note that standard notation and terminology is defined at the beginning of Section~\ref{sec:results}.
For given $k \in \N \cup \{0\}$, we introduce the following two families of polytopes: 
\begin{align*}
 & \Pd{k} := \setcond{P \subseteq \R^d}{P \; \text{polytope}, \; \dim(P) = d, \; \vertset{P} \subseteq \Z^d, \; |\intr{P} \cap \Z^d| = k}\\
\text{and} \\
 & \Sd{k} := \setcond{S \in \Pd{k}}{S \; \text{is a simplex}}.
\end{align*}
Given an integer $k \ge 1$ and a dimension $d \in \N$, the set $\Pd{k}$ is finite up to affine transformations
which preserve the integer lattice $\Z^d$. In particular, $\Sd{k}$ is finite as well.
A natural way to prove finiteness of $\Pd{k}$ and $\Sd{k}$ is to bound the volume of their elements from above in terms of $k$ and $d$. 
This was first done by Hensley \cite{MR688412}.

In order to obtain a volume bound for $\Sd{1}$, Hensley proved a lower bound on
the minimal barycentric coordinate of the single interior integral point of $S \in \Sd{1}$. The minimal barycentric coordinate is in one-to-one correspondence
to the so-called coefficient of asymmetry of $S$ about its interior integral point. The coefficient of asymmetry is defined as follows: the intersection of each line through
the interior integral point with $S$ is divided into two parts by this point. For each line, consider the ratio between the lengths of these two parts.
Then the coefficient of asymmetry is the maximum of these ratios. It is easy to observe that for $S \in \Sd{1}$,
the coefficient of asymmetry about its interior integral point is equal to $\frac{1}{\beta} - 1$,
where $\beta$ is the minimal barycentric coordinate of this point; see also \rescite{MR1996360}{(3)}.
Hensley's results led him to conjecture the following:
The simplex $\conv(\{o, s_1 e_1, \ldots, s_d e_d\}) \in \Sd{1}$ has maximal coefficient of asymmetry among all elements of $\Sd{1}$, where
$(s_i)_{i \in \N}$ denotes the \emph{Sylvester sequence}, which is given by 
\begin{align*}
 & s_1 := 2, \\
 & s_i := 1 + \prod_{j=1}^{i-1} s_j \qquad \text{for} \;  i \ge 2.
\end{align*}

\subsection*{Overview of new results and related open questions}

In this work, we confirm the above conjecture of Hensley by proving sharp lower bounds on the barycentric coordinates. Another conjecture of Hensley was that
the simplex
\[
 S^d_1 := \conv(\{o, s_1 e_1, \ldots, s_{d-1} e_{d-1}, 2(s_d-1)e_d\}) \in \Sd{1}
\]
contains the maximal number of integral points among all elements of $\Sd{1}$.
This simplex appeared in \cite{MR651251}. Hensley derived a bound on $|S \cap \Z^d|$ for $S \in \Sd{1}$ by 
applying Blichfeldt's theorem (see Theorem~\ref{thm_blichfeldt}), which bounds $|S \cap \Z^d|$ in terms of $\vol(S)$. 
It is therefore natural to modify Hensley's conjecture to the following question: Does $S^d_1$ have maximal volume among all elements of $\Sd{1}$?
Following Hensley's results, Lagarias and Ziegler
\cite{MR1138580}, Pikhurko \cite{MR1996360} and Averkov \cite{MR2967480}  made improvements to the upper bound on the volume of elements in $\Sd{1}$, but 
the bounds they obtained were still different from the largest known examples, the simplices $S^d_1$. For large $d$, the best previously known bound
is due to Pikhurko: he proved that for $S \in \Sd{1}$, one has $\vol(S) \le \frac{1}{d!}2^{3d-2} \cdot 15^{(d-1)2^{d+1}}$. For comparison, one has
$\vol(S^d_1) = \frac{1}{d!}2(s_d - 1)^2 \le \frac{1}{d!} 2^{2^d + 1}$.

We confirm that $S^d_1$ has indeed maximal volume in $\Sd{1}$. We also prove several further results on the size of the elements of $\Sd{1}$. 
In this work, the notions of size we will use are the maximal volume, the maximal and minimal volume of $i$-dimensional
faces for given $i$ and the lattice diameter. We will also give a sharp lower bound on the barycentric coordinates of 
the single interior integral point in an element of $\Sd{1}$, which gives us a bound on the coefficient of asymmetry for elements of $\Sd{1}$. 
By Mahler's theorem, one can bound the volume of a simplex in $\Sd{1}$ in terms of the coefficient of asymmetry. The number of integral points of a
simplex can then be bounded in terms of its volume. This is basically the line of argumentation used in \cite{MR688412}, \cite{MR1138580}, \cite{MR1996360}
and \cite{MR2967480}.
For $S \in \Sd{1}$ and given $l \in \{1, \ldots, d\}$, we give sharp upper bounds on the volume
of every $l$-dimensional face of $S$, in particular for $l=d$ on the simplex $S$ itself. For applications in the geometry
of toric varieties, it is also of interest to analyze the dual
of a simplex in $\Sd{1}$. To this end, for $l \in \{1, \ldots, d\}$ we also bound the volume of $l$-dimensional faces of the dual $S^{\ast}$ of $S \in \Sd{1}$
as well as the Mahler volume $\vol(S)\vol(S^{\ast})$.
We then show that the bound on the coefficient of asymmetry for elements of $\Sd{1}$ remains valid for the more general class $\Pd{1}$. 
This also translates into a volume bound for $\Pd{1}$,
improving the one given in \cite{MR1996360}. For general $k \ge 1$, we determine the sharp upper bound on the lattice diameter of polytopes in $\Pd{k}$.
Finally, we consider lattice-free polyhedra. Such polyhedra have applications in mixed-integer optimization. We extend our result on the lattice diameter
to integral polytopes which are maximal with respect to being lattice-free.

In \cite{MR688412}, \cite{MR1138580} and \cite{MR1996360}, not only elements
of $\Pd{1}$ and $\Sd{1}$ were analyzed, but of $\Pd{k}$ and $\Sd{k}$, respectively, for every $k \ge 1$. 
The special case $k=1$ addressed in this manuscript is of interest from a 
number of perspectives. The relevance of the class $\Sd{1}$ for algebraic geometry is explained in more detail in Section~\ref{sec:alg-geo}, where we present two results 
concerning this topic. In particular, our results have been used to classify all $4$-dimensional weighted projective spaces with canonical singularities
\rescite{fake2}{Theorem 3.3}.
Another topic where elements of $\Sd{1}$ are of interest is mixed-integer optimization. We explain more about
possible applications of our results in this area in Section~\ref{sec:opt_appl}.
Nonetheless, one can of course ask whether our results can be extended to the case $k \ge 1$. First, one can extend the question
about the simplices with maximal volume by considering
the following simplices, which were first introduced in \cite{MR651251}.
For $d \ge 2$ and $k \ge 0$, the $d$-dimensional integral simplex
\begin{align}
S^d_{k} := \conv(\{o, s_1 e_1, \ldots, s_{d-1} e_{d-1}, (k+1)(s_d - 1)e_d\}) \label{eq:zpw_simplex}
\end{align}
contains exactly $k$ interior integral points and has volume $\vol(S^d_k) = \frac{(k+1)}{d!}(s_d - 1)^2$. 
It would be interesting to know whether, analogously to the case $k=1$ discussed in this paper, $S^d_k$ has maximal volume among all elements of $\Sd{k}$ 
also for general $k \ge 2$. 
A straightforward generalization of this question to the case $k=0$ is not possible, as it is easy to see
that there exist integral polytopes without interior integral points and arbitrarily large volume. Indeed, there exist
integral simplices of dimension $d$ with arbitrarily large volume in the slab $[0,1] \times \R^{d-1}$.
One can, however, show that there exist only finitely many integral polytopes without interior integral points under certain assumptions of maximality; 
see \cite{MR2855866}, \cite{MR2832401} and \cite{MR1116368}. Again, this is done by bounding the volume. The known bounds, however,
are far from the largest known examples and hence probably far from the truth.

\subsection*{Organization of the paper and proof ideas}\label{ssc:organization}

The new results are formulated in Section~\ref{sec:results}. In Section~\ref{sec:background}, we collect tools from other sources which we use to prove our results. 
The common approach of the mentioned previous publications on volume bounds for $\Sd{k}$ is to bound the barycentric coordinates
$\beta_1, \ldots, \beta_{d+1}$
of an integral point contained in the interior of a simplex $S \in \Sd{k}$ from below.
For $k=1$, we generalize and improve these bounds.
Let $S \in \Sd{1}$ and let $\beta_1 \ge \ldots \ge \beta_{d+1} > 0$ be the barycentric coordinates of the interior integral point of $S$.
Results from \cite{MR2967480} show that, on the one hand, the inequalities
\[
 \beta_1 \cdots \beta_j \le \beta_{j+1} + \ldots + \beta_{d+1}
\]
hold for every $j \in \{1, \ldots, d\}$ and, on the other hand, the volumes of faces of $S$ can be bounded from above in terms of
values $\frac{1}{\beta_a \cdots \beta_b}$ with $a,b \in \{1, \ldots, d+1\}$ and $a < b$. Thus, if one views $\beta_1, \ldots, \beta_{d+1}$
as arbitrary non-negative variables satisfying $\beta_1 + \ldots + \beta_{d+1} = 1$ and $\beta_1 \cdots \beta_j \le \beta_{j+1} + \ldots + \beta_{d+1}$
for every $j \in \{1, \ldots, d\}$, the determination of the minimum of $\beta_a \cdots \beta_b$ (for given $1 \le a \le b \le d$) allows to find upper
bounds on the volumes of faces of $S$. This purely analytical problem is treated in the first 
% Section~\ref{sec:auxiliary}.
% We assume the barycentric coordinates to be ordered decreasingly, i.e., $\beta_1 \ge \ldots \ge \beta_{d+1}$. Then, 
% a result from \cite{MR2967480} links the different size parameters of simplices to products $\beta_a \cdots \beta_b$ with $a,b \in \{1, \ldots, d+1\}$ and $a \le b$.
% In particular, we are interested in lower bounds on $\beta_a \cdots \beta_b$. This is done analytically in the following way:
% all $(d+1)$-tuples of barycentric coordinates fulfil the inequalities $\beta_1 \cdots \beta_j \le \beta_{j+1} + \ldots + \beta_{d+1}$ for every $j \in \{1, \ldots, d\}$.
% We relax the set of $(d+1)$-tuples of barycentric coordinates to the set $\Chi^{d+1} \subseteq \R^{d+1}$ containing
% all $(d+1)$-tuples $(x_1, \ldots, x_{d+1})$ of real numbers such that , $x_1 + \ldots + x_{d+1} = 1$ which fulfil $x_1 \ge \ldots \ge x_{d+1} \ge 0$ and
% $x_1 \cdots x_j \le x_{j+1} + \ldots + x_{d+1}$ for every $j \in \{1, \ldots, d\}$. Over all elements $x \in \Chi^{d+1}$, we 
% minimize the product $x_a \cdots x_b$. 
% We then show that the minimum is attained for elements of a finite subset $\mathcal{Y}^{d+1} \subseteq \Chi^{d+1}$
% which is closely connected to the Sylvester sequence. Within this set, we determine the elements for which the minimal values are attained and thus
% derive lower bounds on $\beta_a \cdots \beta_b$.
% This is the first 
part of Section~\ref{sec:auxiliary}. The second part of Section~\ref{sec:auxiliary} is concerned with uniqueness in the special case of minimizing a single barycentric coordinate.
We link this problem to the problem of unit partitions from number theory and modify a result on unit partitions to derive uniqueness. 
In Sections~\ref{sec:simplices_proofs}--\ref{sec:ag}, we then make use of the information about barycentric coordinates obtained in Section~\ref{sec:auxiliary}
and translate it into the size bounds for integral polytopes given in the results from Section~\ref{sec:results}. Furthermore, we characterize the equality cases where possible.
These characterizations are carried out directly using number-theoretical properties of the Sylvester sequence.

\begin{acknowledgments*}\emph{
The authors thank the anonymous referee for numerous very helpful comments and suggestions. 
The second author was supported by a scholarship of the state of Sachsen-Anhalt, Germany.
The third author was supported by the US NSF grant DMS 1203162.} 
\end{acknowledgments*}

\section{Main results}\label{sec:results}

\subsection*{Basic notation and terminology}

For background knowledge from geometry of numbers and convex geometry,
we refer to \cite{MR1940576}, \cite{MR893813}, \cite{MR1434478}, \cite{MR1451876} and \cite{MR1216521}.

Throughout the text, $d \in \N$ is the dimension of the ambient space $\R^d$, which is equipped with the standard scalar product,
denoted by $\sprod{\cdot}{\cdot}$. By $o$ we denote the zero vector, $\allone$ denotes the all-one vector and $e_i$ denotes the $i$-th unit vector. In what follows the dimension of $o$, $\mathbbm{1}$ and $e_i$ is given by the context.
For two points $x,y \in \R^d$, we denote the closed line segment
connecting those points by $[x,y]$ and its Euclidean length by $\len([x,y])$.
For a subset $X$ of $\R^d$, we use $\conv(X)$ and $\lspan(X)$ to denote the \emph{convex hull} and
the \emph{linear hull} of $X$, respectively. The cardinality
of a set $X$ is denoted by $|X|$.

A \emph{polytope} $P \subseteq \R^d$ is the intersection of finitely many closed halfspaces such that $P$ is bounded.
Throughout this paper, we will make use of the following notation: $\relbd{P}$, $\bd{P}$, $\relintr{P}$  and $\intr{P}$ denote the \emph{relative boundary},
\emph{boundary}, \emph{relative interior} and \emph{interior} of a polytope $P$, 
respectively. The set of vertices of $P$ is denoted by $\vertset{P}$. 
We use the notation $\cF_i(P)$ for the set of all $i$-dimensional faces of a $d$-dimensional
polytope $P$, where $i \in \{1, \ldots, d\}$.
For a compact convex set $K \subseteq \R^d$ which contains the origin in its interior, $K^{\ast}$ denotes the {\em polar} (or {\em dual}) body of $K$, i.e.
$K^{\ast} = \{y \in \R^d: \langle x, y \rangle \le 1 \, \forall \, x \in K \}$.
For a polytope $P$ with $o \in \intr{P}$, the polar body $P^{\ast}$ is again a polytope containing $o$ in its interior.
For a compact convex set $K$ with nonempty interior and $u \in \R^d$, we denote by $\rho(K,u)$ the \emph{radius function}  
$\rho(K,u) := \max\setcond{\rho \ge 0}{\rho u \in K}$. Furthermore, $h(K,u)$ denotes the \emph{support function}
$h(K,u) := \operatorname{sup}\setcond{\sprod{u}{x}}{x \in K}$. 
For a $d$-dimensional convex set $K$ and
an integral point $x$ in the interior of $K$, the \emph{coefficient of asymmetry} of $K$ with respect to $x$ is defined 
as
\[
 \ca(K,x) := \max\setcond{\frac{\len([x,a])}{\len([x,b])}}{a,b \in \bd{K}, x \in [a,b]}.
\]

The two different normalizations of volume used in this paper are the standard $i$-dimensional 
\emph{volume} of an $i$-dimensional polytope $P$ ($i \in \{1, \ldots, d\}$),
denoted by $\vol(P)$, 
and for rational polytopes, the \emph{normalized volume}, denoted by $\vol_{\Z}(P)$, which is defined as follows: for a rational polytope $P$ of dimension $ i \in \{1, \ldots, d\}$,
denote by $\Lambda$ the rank $i$ lattice given as the intersection of $\Z^d$ with the linear hull of all vectors $x-y$ such that $x,y \in P$. 
Then $\vol_{\Z}(P)$ is defined as
$\vol(P)/\det(\Lambda)$, where $\det(\Lambda)$ denotes the determinant of $\Lambda$. Clearly, if $P$ is $d$-dimensional,
$\vol_{\Z}(P) = \vol(P)$. Note that in the literature the notion {\em normalized volume} is sometimes used to denote $\dim(P)!\; \vol_{\Z}(P)$.

We say that two integral polytopes $P,Q$ are {\em unimodularly equivalent}, denoted by $P \cong Q$,  
if there exists an affine bijection $\phi$ on $\R^d$ satisfying $\phi(\Z^d) = \Z^d$ and $\phi(P)=Q$.
We call such a bijection a \emph{unimodular transformation}. 
We say that a convex set $C \subseteq \R^d$ is \emph{lattice-free} if it contains
no points of $\Z^d$ in its interior. 
The lattice-free $d$-dimensional integral polytopes are exactly the elements of $\Pd{0}$.
For integral lattice-free polytopes, we define the following two kinds of maximality:
by $\Plm$ we denote the family of all $P \in \Pd{0}$ such that $P+g$ is not lattice-free for every rational line $g$ in $\R^d$ passing 
through the origin (where ``lmax'' is short for ``lineality space maximal''\footnote{The lineality space of a convex body $K \subseteq \R^d$ is the
set of all $u \in \R^d$ such that $K + \lspan(\{u\}) = K$; see \rescite{MR1451876}{p. 65}}) and
by $\Pim$ we denote the family of all $P \in \Pd{0}$ such that $P$ is not properly contained in $Q$ for every $Q \in \Pd{0}$ (where ``imax'' is short for ``inclusion maximal'').
By $\ld{P}$ we denote the \emph{lattice diameter} of a polyhedron $P$, which is defined as the maximum of
$|P \cap \Z^d \cap g| - 1$ over all rational lines $g$. 
To extend this definition to the empty set, we set $\ld{\emptyset} = -1$ in accordance with $|\emptyset \cap \Z^d \cap g| - 1 = -1$.

A $d$-dimensional polytope with $d+1$ vertices is called \emph{simplex} of dimension $d$.
Given a $d$-dimensional simplex $S \subseteq \R^d$ with vertices $v_1, \ldots, v_{d+1}$ and a point $x \in \R^d$, the \emph{barycentric coordinates} of 
$x$ with respect to $S$ are uniquely determined real numbers $\beta_1, \ldots, \beta_{d+1}$ satisfying $\sum_{i=1}^{d+1} \beta_i =1$ 
such that $x = \sum_{i=1}^{d+1} \beta_i v_i$. In this case, for $i \in \{1, \ldots, d+1\}$, we say that $\beta_i$ is the barycentric coordinate associated with $v_i$.
Note that $x \in \intr{S}$ if and only if $\beta_i > 0$ for all $i \in \{1, \ldots, d+1\}$.

\subsection*{Bounds for $\Sd{1}$}\label{ssc:simplices_results}

Our first main result gives sharp lower bounds for the sorted sequence of the barycentric coordinates.
We introduce the following simplices for which those bounds are attained.
For $j \in \{1, \ldots, d+1\}$, we define the simplex $T^d_{1,j} \in \Sd{1}$ by
\[
 T^d_{1,j} := \conv(\{o, s_1 e_1, \ldots, s_{j-1} e_{j-1}, (d-j+2)(s_j-1) e_j, \ldots, (d-j+2)(s_j-1) e_d\}).
\]
Note that in the degenerate cases $j \in \{1, d+1\}$, this definition should be interpreted as
$T^d_{1,1} := \conv(\{o, (d+1) e_1, \ldots, (d+1) e_d\})$ and 
$T^d_{1,d+1} := \conv(\{o, s_1 e_1, \ldots, s_d e_d\})$, respectively. Also, note that $T_{1,d}^d = S_1^d$.
A brief argument why the simplices $T^d_{1,j}$ are in $\Sd{1}$ is given in Remark~\ref{rem_sylv_sim}.

\begin{theorem}\label{main_bc_bound}
Let $S \in \Sd{1}$, where $d \in \N$, and let $i \in \{1, \ldots, d+1\}$. 
Let $\beta_1, \ldots, \beta_{d+1}$ be the barycentric coordinates of the integral point in the interior of $S$ such that 
$\beta_1 \ge \ldots \ge \beta_{d+1}$. Then
\begin{align}
 \beta_i \ge \frac{1}{(d-i+2) (s_i - 1)}. \label{eq:bc_bound_formula}
\end{align}
Furthermore, the following statements hold:
\begin{itemize}
\item[(a)] For $S \cong T^d_{1,i}$, the inequality \eqref{eq:bc_bound_formula} is attained with equality.
 \item[(b)] Inequality \eqref{eq:bc_bound_formula} holds with equality if and only if 
\begin{align*}
 (\beta_1, \ldots, \beta_{d+1}) = \left(\frac{1}{s_1}, \ldots, \frac{1}{s_{i-1}}, \frac{1}{(d-i+2) (s_i - 1)}, \ldots, \frac{1}{(d-i+2) (s_i - 1)}\right).
\end{align*}
\item[(c)] For $i=d+1$, equality holds in \eqref{eq:bc_bound_formula} if and only if $S \cong T^d_{1,i}$.
\end{itemize}
\end{theorem}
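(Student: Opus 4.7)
The plan is to follow the roadmap outlined in the organizational sketch: reduce the geometric statement to an extremal problem over $\Chi^{d+1}$, solve that problem analytically, and then handle the equality cases (a)--(c), with (c) requiring an additional number-theoretic input.

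First, I would establish that the sorted barycentric coordinates $\beta_1 \ge \ldots \ge \beta_{d+1} > 0$ of the unique interior integral point of $S \in \Sd{1}$ satisfy the Hensley-type inequalities
\[
\beta_1 \cdots \beta_j \;\le\; \beta_{j+1} + \ldots + \beta_{d+1} \qquad (j = 1, \ldots, d),
\]
so that $(\beta_1, \ldots, \beta_{d+1}) \in \Chi^{d+1}$; these follow from a pigeonhole/Blichfeldt argument applied to lattice translates of the subsimplices spanned by the interior integral point together with the facets of $S$, where the absence of a second interior integral point forces each such inequality. The bound \eqref{eq:bc_bound_formula} then reduces to computing the minimum of $x_i$ over the compact set $\Chi^{d+1}$.

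For that minimization I would argue, by an exchange/perturbation argument, that at any optimizer the last $d - i + 2$ entries must coincide and the first $i - 1$ product inequalities must be tight, i.e.\ $x_1 \cdots x_j = x_{j+1} + \ldots + x_{d+1}$ for $j = 1, \ldots, i-1$ and $x_i = x_{i+1} = \ldots = x_{d+1}$. If one of these failed, one could perturb the tuple inside $\Chi^{d+1}$ --- moving weight between the tail and a strategically chosen head coordinate --- so as to strictly decrease $x_i$. Solving the resulting triangular system by induction, using the Sylvester identities $\prod_{k<i} s_k = s_i - 1$ and $\sum_{k<i} \tfrac{1}{s_k} = 1 - \tfrac{1}{s_i - 1}$, yields the unique minimizer
\[
(x_1, \ldots, x_{d+1}) \;=\; \Bigl(\tfrac{1}{s_1}, \ldots, \tfrac{1}{s_{i-1}}, \underbrace{\tfrac{1}{(d-i+2)(s_i-1)}, \ldots, \tfrac{1}{(d-i+2)(s_i-1)}}_{d - i + 2 \text{ entries}}\Bigr),
\]
which simultaneously establishes the inequality \eqref{eq:bc_bound_formula} and the ``only if'' direction of (b). Part (a) is then a direct verification: the Sylvester identity implies $(1, \ldots, 1) \in \intr{T^d_{1,i}}$, its barycentric coordinates are precisely the tuple above, and since $T^d_{1,i} \in \Sd{1}$ this is the unique interior integral point. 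The ``if'' direction of (b) follows at once from (a).

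The main obstacle is part (c). Equality in \eqref{eq:bc_bound_formula} for $i = d+1$ fixes the barycentric coordinates of the interior integral point as the Sylvester partition $\bigl(\tfrac{1}{s_1}, \ldots, \tfrac{1}{s_d}, \tfrac{1}{s_{d+1} - 1}\bigr)$, but this by itself does not pin down $S$ up to unimodular equivalence. To bridge this gap I would invoke the unit-fraction uniqueness result flagged in the organizational sketch and developed in Section~\ref{sec:auxiliary} (a Curtiss-type theorem on representations of $1$ as a sum of reciprocals): the denominators $s_1, \ldots, s_d$ are forced, via integrality of $S$ and vertex--facet incidence, to coincide with the normalized volumes of the $d$ facets of $S$ meeting at the vertex of minimal barycentric coordinate, and the Sylvester tuple is the only such integer partition compatible with the product-sum constraints. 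A short induction on $d$, combined at each step with a unimodular change of coordinates aligning the edge of smallest denominator with a coordinate axis, then places the vertices of $S$ at $o, s_1 e_1, \ldots, s_d e_d$, completing the proof of $S \cong T^d_{1, d+1}$.
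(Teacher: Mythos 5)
Your overall plan is in the right spirit: reduce to an extremal problem over $\Chi^{d+1}$, exploit the Sylvester identities, and handle (c) with a separate lattice argument. However, there are two substantive gaps, the first of which is the most serious.

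The perturbation argument you propose does not suffice to prove the ``only if'' direction of (b). You claim that if $x_i = \cdots = x_{d+1}$ fails or some $\PS(j)$ with $j < i$ is strict, one can ``move weight between the tail and a strategically chosen head coordinate'' to decrease $x_i$. But such a perturbation can easily leave $\Chi^{d+1}$: increasing a head coordinate $x_j$ while decreasing the tail simultaneously increases the product $x_1 \cdots x_k$ and decreases the sum $x_{k+1} + \ldots + x_{d+1}$ for every $k \in \{j, \ldots, i-1\}$, so if any one of those $\PS(k)$ is already tight the perturbed point is infeasible, and nothing in your sketch rules this out. The paper is explicit about exactly this obstruction: because the objective $x_i$ has zero exponents outside position $i$, the perturbation machinery (Lemma~\ref{i_and_k}) only yields that \emph{some} optimizer lies in $\Y$, never that \emph{all} optimizers do, and the passage after Theorem~\ref{thm_location} states this in so many words. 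The paper instead proves Theorem~\ref{thm_single_low} via the chain
\[
x_i \;\ge\; \frac{x_i + \cdots + x_n}{n-i+1} \;=\; \frac{1 - \sum_{j<i} x_j}{n-i+1} \;\ge\; \frac{1}{(n-i+1)(s_i - 1)},
\]
where the first step has an obvious equality condition (constant tail) and the second step is the Soundararajan/Curtiss-type Lemma~\ref{prop_sound_rev}, whose equality case forces $x_j = 1/s_j$ for $j < i$. That Muirhead-based lemma is where the unit-fraction uniqueness actually enters, and it is indispensable for (b); your proposal uses it only in (c), where it is not what is needed.

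For (c), once (b) is established the barycentric coordinates are $\bigl(\tfrac{1}{s_1}, \ldots, \tfrac{1}{s_d}, \tfrac{1}{s_{d+1}-1}\bigr)$, so the unit-fraction result is already spent; the remaining issue is purely lattice-theoretic. Your sketch via ``normalized volumes of the facets meeting at the vertex of minimal barycentric coordinate'' and an induction aligning one axis at a time is vague (barycentric coordinates are not equal to facet volumes) and does not explain why such a unimodular change of coordinates exists at each step. The paper's argument is cleaner and you should follow it: pairwise coprimality of the $s_i$ together with integrality of the interior point gives, via Proposition~\ref{div_property_sylv}, that each $p_i/s_i \in \Z^d$; the sublattice generated by these points has determinant $1$ because its half-open fundamental parallelepiped sits inside $\intr{S}$ and therefore contains only the single interior integral point; hence that sublattice is $\Z^d$ and the map $p_i/s_i \mapsto e_i$ is a unimodular transformation identifying $S$ with $T^d_{1,d+1}$.
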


Note that assertion (c) cannot be extended to the case $i < d+1$; see Remark~\ref{rem_not_unique_bc}.

This result confirms Hensley's conjecture regarding the coefficient of asymmetry. To see this, observe that 
if for some $S \in \Sd{1}$, we have
$\intr{S} \cap \Z^d = \{x\}$ and $\beta_1 \ge \ldots \ge \beta_{d+1}$ are the barycentric coordinates of $x$ with respect to $S$, the
coefficient of asymmetry can be written as
\begin{align}
 \ca(S,x) = \max_{i \in \{1, \ldots, d+1\}} \frac{1 - \beta_i}{\beta_i} = \frac{1 - \beta_{d+1}}{\beta_{d+1}}. \label{eq:proving_hensley}
\end{align}
This simple fact can be found in \rescite{MR1996360}{(3)}.
Therefore, Theorem~\ref{main_bc_bound}(c) confirms Hensley's conjecture, as $\ca(S,x)$ is maximal when
$\beta_{d+1}$ is minimal.

Next, we
present a sharp upper bound on the face volumes of simplices in $\Sd{1}$, including the $d$-dimensional face, i.e., the simplex itself. 

\begin{theorem}\label{main_volume}
 Let $S \in \Sd{1}$, where $d \ge 3$, and let $l \in \{1, \ldots, d\}$. Then 
\begin{align}
 \max_{F \in \cF_l(S)} \vol_{\Z}(F) \le \frac{2(s_d-1)^2}{l!(s_{d-l+1}-1)}. \label{eq:face_vol_bound}
\end{align}
Furthermore, the following statements hold:
\begin{itemize}
 \item [(a)] For $S \cong S^d_1$, inequality \eqref{eq:face_vol_bound} is attained with equality.
\item [(b)] For $d \ge 4$ and $l \in \{1,d\}$, the equality in \eqref{eq:face_vol_bound} is attained if and only if $S \cong S^d_1$.
\end{itemize}
\end{theorem}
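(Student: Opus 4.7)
The plan is to translate information on the barycentric coordinates of the interior integral point into face-volume bounds, following the strategy outlined in Section~\ref{ssc:organization}. Let $S \in \Sd{1}$ have interior integral point $x$ with barycentric coordinates ordered decreasingly as $\beta_1 \ge \ldots \ge \beta_{d+1}$, and fix $l \in \{1, \ldots, d\}$.

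The first step is to invoke the face-volume inequality from \cite{MR2967480}, which bounds $\max_{F \in \cF_l(S)} \vol_{\Z}(F)$ from above in terms of a specific product of the sorted $\beta_i$'s. The relevant product corresponds to keeping the $l+1$ vertices of $S$ with the smallest barycentric coordinates when forming the maximum-volume $l$-face; inspection of $S^d_1$ makes this choice natural, since the extremal $l$-face there is $\conv\{o, s_{d-l+1}e_{d-l+1}, \ldots, s_{d-1}e_{d-1}, 2(s_d-1)e_d\}$. The second step is to apply the sharp lower bound on this product developed in Section~\ref{sec:auxiliary}; this bound is attained at the Sylvester-type tuple $(1/s_1, \ldots, 1/s_{d-1}, 1/(2(s_d-1)), 1/(2(s_d-1)))$, and after invoking the Sylvester identity $s_1 \cdots s_{j-1} = s_j - 1$ the resulting estimate simplifies to exactly the right-hand side of \eqref{eq:face_vol_bound}. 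Part (a) is then an immediate verification: the highlighted $l$-face of $S^d_1$ has normalized volume $\frac{1}{l!}\, s_{d-l+1} s_{d-l+2} \cdots s_{d-1} \cdot 2(s_d-1)$, which collapses to the bound via the same identity.

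For part (b), I expect equality in \eqref{eq:face_vol_bound} to force every intermediate inequality to be tight, which in turn pins down the barycentric coordinates as $\beta_i = 1/s_i$ for $i \le d-1$ and $\beta_d = \beta_{d+1} = 1/(2(s_d-1))$, precisely those of the interior integral point of $S^d_1$. The main obstacle is then to promote this rigidity of the $\beta_i$ to the unimodular-equivalence statement $S \cong S^d_1$. For $l = 1$ the additional geometric datum is a lattice edge of maximal lattice length $2(s_d-1)$, while for $l = d$ the full normalized volume of $S$ is fixed to $2(s_d-1)^2/d!$; in both cases, I expect to exploit the number-theoretic rigidity of the Sylvester sequence, along the lines indicated at the end of Section~\ref{ssc:organization}, to reconstruct $S$. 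The restriction $l \in \{1, d\}$ and $d \ge 4$ reflects that for intermediate $l$ or small $d$ the extremal barycentric coordinates alone do not pin down $S$ up to unimodular equivalence, so extra coordinate-free information coming from the extremal face is needed, and this is available only at the extreme values of $l$.
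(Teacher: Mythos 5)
Your treatment of the inequality and of part (a) matches the paper's proof: apply the face-volume estimate $\max_{F\in\cF_l(S)}\vol_{\Z}(F)\le 1/(l!\,\beta_{d-l+1}\cdots\beta_d)$ coming from Theorem~\ref{thm_faces_bc}, then minimize the product $\beta_{d-l+1}\cdots\beta_d$ over $\Chi^{d+1}$ using Theorem~\ref{thm_location}(e) (the case $b=n-1$), and observe that the minimizing tuple $\xquer(d)$ reproduces the right-hand side of \eqref{eq:face_vol_bound} via the Sylvester identity. The verification for $S^d_1$ in (a) is exactly the computation you describe.

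Part (b) is where your proposal has a genuine gap. You correctly identify that equality forces the barycentric coordinates to be $\bigl(1/s_1,\ldots,1/s_{d-1},1/(2(s_d-1)),1/(2(s_d-1))\bigr)$, and you correctly identify the additional datum available in each case (a lattice edge of maximal length $2(s_d-1)$ when $l=1$, the full normalized volume $2(s_d-1)^2/d!$ when $l=d$). But the step you label ``exploit the number-theoretic rigidity of the Sylvester sequence to reconstruct $S$'' is the substantive part of the proof and cannot be left implicit: fixing the barycentric coordinates of the interior point does \emph{not} by itself determine $S$ up to unimodular equivalence, as Remark~\ref{rem_not_unique_bc} makes explicit. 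The paper closes the gap with two lemmas that are absent from your outline. First, Lemma~\ref{lem_hyperplane_intersection} uses the pairwise coprimality of the $s_i$'s (Proposition~\ref{div_property_sylv}) to show that any $S$ with these barycentric coordinates must be of the form $\conv\bigl((T^{d-1}_{1,d}\times\{0\})\cup\{\pm(a,h)\}\bigr)$ for some $h\in\Z$ and $a\in\{0,\ldots,h-1\}^{d-1}$, reducing the problem to two unknown parameters. The volume (for $l=d$) or edge-length (for $l=1$) condition then forces $h=s_d-1$, but $a$ is still free; to show $a\equiv o\pmod{h}$ the paper passes to the slice $S\cap(\R^{d-1}\times\{1\})$ and invokes Lemma~\ref{lem_cov_min_simplex}, a covering-type statement that $\frac{h-1}{h}T^{d-1}_{1,d}+\frac{1}{h}a$ is lattice-free only when $a/h\in\Z^{d-1}$. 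Without this machinery the conclusion $S\cong S^d_1$ simply does not follow from the data you have assembled, since a nontrivial shear by $a$ would preserve both the barycentric coordinates and the normalized volume. You should either supply an argument of this kind or cite the relevant structure lemmas explicitly.
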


In Theorem~\ref{main_volume}, for $l=d$ we have
\begin{align*}
\vol(S) \le \frac{2}{d!}(s_d-1)^2. 
\end{align*}
This gives us the sharp bound on the volume of the simplices in $\Sd{1}$.
Theorem~\ref{main_volume}
is a generalization of Theorems A and B in \cite{ben}, which are concerned with reflexive simplices, i.e., integral simplices containing the origin and
such that their duals are again integral. For reflexive simplices, Theorems A and B in \cite{ben} cover Theorem~\ref{main_volume}(a) with $l=d$ and 
Theorem~\ref{main_volume}(b). 
Note that while Theorem~\ref{main_volume}(b) guarantees uniqueness for $l \in \{1,d\}$, there is nothing said about $l \in \{2, \dots, d-1\}$.
We ask about a possible characterization of the simplices $S \in \Sd{1}$ for which \eqref{eq:face_vol_bound} is attained with equality in the case 
$l \in \{2, \ldots, d-1\}$.
It has been conjectured that for $d \ge 4$ the inequalities in Theorem~\ref{main_volume}(a) remain valid for arbitrary polytopes in
$\Pd{1}$; see \cite[Conjecture~1.7]{ben}. 
Moreover, one can ask whether in this case equality in all of the inequalities of Theorem~\ref{main_volume}(a) for $l \in \{1, \ldots, d\}$ 
is only satisfied for $S^d_1$, up to unimodular transformation.

As a consequence of Theorem~\ref{main_volume} and a well-known theorem by Blichfeldt (see Theorem~\ref{thm_blichfeldt} below), we derive an upper bound on the number of integral points of
a simplex in $\Sd{1}$ and each of its $l$-dimensional facets.

\begin{corollary}\label{applying_blichfeldt}
  Let $S \in \Sd{1}$, where $d \ge 3$, and let $l \in \{1, \ldots, d\}$. 
Then 
\[
 \max_{F \in \cF_l(S)} |F \cap \Z^d| \le l + l! \vol_{\Z}(F) \le l + \frac{2(s_d-1)^2}{(s_{d-l+1}-1)}.
\]
\end{corollary}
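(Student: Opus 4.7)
The plan is to derive this corollary as an immediate consequence of Theorem~\ref{main_volume} combined with a standard form of Blichfeldt's theorem (referenced as Theorem~\ref{thm_blichfeldt} in the previous section). The statement contains two inequalities; each corresponds to one of these two ingredients.

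For the first inequality, I would fix an $l$-dimensional face $F$ of $S$. Since $F$ is an integral polytope of dimension $l$ in its affine hull, I would apply Blichfeldt's theorem in the lattice $\Lambda := \Z^d \cap \lspan(F-F)$ of rank $l$. In this setting the classical bound reads
\[
 |F \cap \Z^d| = |F \cap \Lambda| \le l + l!\, \vol_\Lambda(F) = l + l!\, \vol_{\Z}(F),
\]
where the last equality uses the definition of normalized volume given in Section~\ref{sec:results}. Taking the maximum over $F \in \cF_l(S)$ yields the first inequality.

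For the second inequality, I would directly invoke the face-volume bound \eqref{eq:face_vol_bound} from Theorem~\ref{main_volume}, namely
\[
 \vol_{\Z}(F) \le \frac{2(s_d-1)^2}{l!\,(s_{d-l+1}-1)} \qquad \text{for every } F \in \cF_l(S).
\]
Multiplying by $l!$ and adding $l$ to both sides gives
\[
 l + l!\, \vol_{\Z}(F) \le l + \frac{2(s_d-1)^2}{s_{d-l+1}-1},
\]
and taking the maximum over $F \in \cF_l(S)$ finishes the chain.

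Since both inequalities are direct invocations of already-available statements, there is no real obstacle; the only point one has to be careful with is the normalization of volume (Blichfeldt's bound must be applied with respect to the induced lattice $\Lambda$ so that it matches the definition of $\vol_{\Z}(F)$ used in Theorem~\ref{main_volume}). The whole proof is therefore about two lines, requiring no additional ideas beyond the two results being combined.
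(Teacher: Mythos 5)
Your proposal matches the paper's proof exactly: the paper simply states that the corollary ``follows immediately from Theorem~\ref{main_volume} and Blichfeldt's Theorem~\ref{thm_blichfeldt},'' and your two-step argument (Blichfeldt in the induced rank-$l$ lattice for the first inequality, the face-volume bound \eqref{eq:face_vol_bound} for the second) is precisely the intended deduction. Your explicit care with the normalization $\vol_{\Z}$ via the induced lattice $\Lambda$ is correct and fills in the detail the paper leaves implicit.
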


Note that this corollary yields an improvement on the previously known bounds on $|S \cap \Z^d|$. It is, however, not fulfilled
with equality for $S^d_1$. Thus, Hensley's conjecture that $S^d_1$ maximizes $|S \cap \Z^d|$ among all elements in $\Sd{1}$ remains open.

\subsection*{Bounds for $\Sd{1}$ involving dualization}

In the following results, we consider $S \in \Sd{1}$ satisfying $\intr{S} \cap \Z^d = \{o\}$ and provide volume bounds for $S^{\ast}$.
Note that the first of these results is again an extension of a result in \cite{ben}, namely, assertion 2 of Corollary 6.1. It is a sharp estimate on the so-called Mahler volume.

\begin{theorem}\label{mahler} 
Let $d \in \N$ and $S \in \Sd{1}$ and $o \in \intr{S}$. Then 
\[(d+1)^{d+1} \leq (d!)^2\vol(S) \, \vol(S^{\ast}) \leq (s_{d+1} - 1)^2.\]
Furthermore, the lower bound is attained with equality if and only if the unique interior integral point of $S$ equals its centroid, while 
the upper bound is attained with equality if and only if $S \cong T^d_{1,d+1}$.
\end{theorem}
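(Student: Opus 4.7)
The idea is to turn the Mahler product into the reciprocal of the product of barycentric coordinates of $o$ in $S$, after which AM--GM delivers the lower bound and the auxiliary results of Section~\ref{sec:auxiliary} deliver the upper bound.

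Let $v_1, \ldots, v_{d+1}$ denote the vertices of $S$, and write $o = \sum_{i=1}^{d+1} \beta_i v_i$ with $\sum_i \beta_i = 1$ and $\beta_i > 0$. Let $n_1, \ldots, n_{d+1}$ be the corresponding vertices of $S^{\ast}$, characterized by $\sprod{n_i}{v_j} = 1$ for all $j \neq i$. Using $\sprod{n_i}{o} = 0$ together with $o = \sum_j \beta_j v_j$ one finds $\sprod{n_i}{v_i} = 1 - 1/\beta_i$. Forming the augmented $(d+1) \times (d+1)$ matrices
\[
\tilde V = \begin{pmatrix} v_1 & \cdots & v_{d+1} \\ 1 & \cdots & 1 \end{pmatrix}, \qquad \tilde N = \begin{pmatrix} n_1 & \cdots & n_{d+1} \\ -1 & \cdots & -1 \end{pmatrix},
\]
one has $|\det \tilde V| = d!\, \vol(S)$, $|\det \tilde N| = d!\, \vol(S^{\ast})$, and $\tilde V^{\top} \tilde N = -\mathrm{diag}(1/\beta_1, \ldots, 1/\beta_{d+1})$. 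Taking determinants yields the pivotal identity
\[
(d!)^2 \vol(S) \vol(S^{\ast}) \;=\; \frac{1}{\beta_1 \cdots \beta_{d+1}}.
\]

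The lower bound is then immediate from AM--GM applied to $\sum_i \beta_i = 1$: one has $\beta_1 \cdots \beta_{d+1} \le (d+1)^{-(d+1)}$, with equality if and only if each $\beta_i = 1/(d+1)$, i.e.\ if and only if $o = \tfrac{1}{d+1}\sum_i v_i$ is the centroid of $S$. For the upper bound I would invoke the $a=1$, $b=d+1$ case of the minimization results in Section~\ref{sec:auxiliary}: on the relaxed domain $\Chi^{d+1}$ the product $x_1 \cdots x_{d+1}$ attains its minimum on the finite subset $\Yzahl{d+1}$, and within $\Yzahl{d+1}$ the unique minimizer is the Sylvester tuple $(1/s_1, \ldots, 1/s_d, 1/(s_{d+1}-1))$ with value $1/(s_{d+1}-1)^2$. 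Substituting into the identity yields $(d!)^2 \vol(S) \vol(S^{\ast}) \le (s_{d+1}-1)^2$.

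For the equality case of the upper bound, the uniqueness just mentioned forces the decreasingly ordered tuple of barycentric coordinates of $o$ to coincide with the Sylvester tuple; in particular $\beta_{d+1} = 1/(s_{d+1}-1)$, so Theorem~\ref{main_bc_bound}(c) applied with $i = d+1$ identifies $S \cong T^d_{1,d+1}$. The main obstacle is the clean algebraic reduction in the very first step: without the identity $(d!)^2 \vol(S) \vol(S^{\ast}) = 1/\prod_i \beta_i$ there is no obvious common parameter relating $\vol(S)$ and $\vol(S^{\ast})$, but once it is established both bounds and their equality characterizations follow painlessly from AM--GM and the product estimates on $\Chi^{d+1}$ and $\Yzahl{d+1}$ developed in Section~\ref{sec:auxiliary}.
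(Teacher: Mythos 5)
Your proof is correct and follows the same overall route as the paper's: reduce the Mahler product to $1/\bigl((d!)^2 \beta_1 \cdots \beta_{d+1}\bigr)$, then obtain the lower bound from AM--GM and the upper bound and its equality case from the minimization of $x_1 \cdots x_{d+1}$ over $\Chi^{d+1}$ (Theorem~\ref{thm_location}(c)) combined with Theorem~\ref{main_bc_bound}(c). The one genuine difference is in the first step: the paper states the pivotal identity as Proposition~\ref{mahler_bary} and justifies it by citing Proposition~3.6 of \cite{ben}, whereas you give a short self-contained proof via the $(d+1)\times(d+1)$ augmented matrices $\tilde V$, $\tilde N$ and the observation that $\tilde V^{\top}\tilde N = -\mathrm{diag}(1/\beta_1,\ldots,1/\beta_{d+1})$. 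I checked this computation: from $\sprod{n_i}{o}=0$ and $o=\sum_j \beta_j v_j$ one indeed gets $\sprod{n_i}{v_i}=1-1/\beta_i$, the off-diagonal entries of $\tilde V^{\top}\tilde N$ vanish, $|\det\tilde V|=d!\,\vol(S)$ and $|\det\tilde N|=d!\,\vol(S^{\ast})$, and taking absolute determinants gives the identity. This makes your version of the argument self-contained, which is a pleasant improvement over the paper's reliance on an external reference, while the logical core is identical.

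One small caveat, shared with the paper's own proof: Theorem~\ref{thm_location} is stated only for $n=d+1\ge 4$, so strictly speaking the upper bound and its equality case for $d\in\{1,2\}$ require a separate (easy) verification; you inherit this gap from the source you invoke, so it is not a defect specific to your proposal.
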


The following result is about face sizes of the dual of a simplex $S \in \Sd{1}$.

\begin{theorem}\label{thm_dual_face_volumes}
 Let $S \in \Sd{1}$, where $d \ge 4$. Let $o \in \intr{S}$ and $l \in \{1, \ldots, d\}$. Then
\begin{align}
 \max_{F \in \cF_l(S^{\ast})} \vol_{\Z}(F) \le \frac{2(s_d-1)^2}{l!(s_{d-l+1}-1)}. \label{eq:dual_vol}
\end{align}
Furthermore, with $T := T^{d-1}_{1,d}$, the following statements hold:
\begin{itemize}
 \item[(a)] Equality holds in \eqref{eq:dual_vol} if $S \cong \conv ((T \times \{0\}) \cup \{\pm e_d\})$.
 \item[(b)] If $l \in \{1,d\}$, then equality holds in \eqref{eq:dual_vol} if and only if $S \cong \conv ((T \times \{0\}) \cup \{\pm e_d\})$.
\end{itemize}
\end{theorem}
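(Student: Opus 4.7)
The plan is to parallel the proof of Theorem~\ref{main_volume} on the dual side, exploiting that $o$ has the same barycentric coordinates in $S$ and in $S^*$. Writing $v_1, \ldots, v_{d+1}$ for the vertices of $S$ and $w_1, \ldots, w_{d+1}$ for those of $S^*$ (uniquely determined by $\langle w_j, v_i\rangle = 1$ for $i \ne j$, whence $\langle w_j, v_j\rangle = -(1-\beta_j)/\beta_j$), one verifies directly that $\sum_j \beta_j w_j = 0$: pair with each $v_i$ and use $\sum_i \beta_i = 1$. Consequently the Section~\ref{sec:auxiliary} bounds on products $\beta_a \cdots \beta_b$ of the decreasingly ordered barycentric coordinates apply verbatim to $S^*$.

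The technical core would be a formula for $\vol_\Z(F^*_J)$, where $F^*_J := \conv(\{w_j : j \in J\})$ is an $l$-face of $S^*$ with $|J| = l+1$, in terms of the $\beta_j$'s and the normalized volume of an associated face of $S$. To derive it, I would translate $F^*_J$ by a suitable $y_0 \in \aff(F^*_J)$ into the $l$-dimensional linear subspace $L := (\lspan(\{v_i : i \in J^c\}))^\perp$ and identify the translate with the polar in $L$ of a simplex $T_J \subset L$ whose vertices are the scaled projections $\pi_L(v_j)/c_j$ with $c_j := 1 - \langle y_0, v_j\rangle$. A short computation using $\sum_i \beta_i v_i = 0$ shows that $o$ lies in $\relintr{T_J}$ with barycentric coordinates $\mu_j = \beta_j c_j$ and $\sum_{j \in J} \mu_j = 1$. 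The $l$-dimensional Mahler identity
\[
(l!)^2 \vol_l(T_J)\, \vol_l(T_J^\circ) \prod_{j \in J} \mu_j = 1,
\]
combined with the reciprocity between the sublattices $\Z^d \cap L$ and $\Z^d \cap L^\perp$, yields an explicit expression whose only dependence on $S$ is through $\prod_{j \in J} \beta_j$ and the normalized volume of a specific face of $S$ lying in $L^\perp$.

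With this formula in hand, inequality \eqref{eq:dual_vol} follows by maximizing over $J$: the product $\prod_{j \in J} \beta_j$ is minimized for $J$ selecting the $l+1$ smallest barycentric coordinates, and the Section~\ref{sec:auxiliary} product bounds together with Theorem~\ref{main_volume} applied to the accompanying face of $S$ deliver the right-hand side $\frac{2(s_d-1)^2}{l!(s_{d-l+1}-1)}$. Part (a) is then immediate from polar involutivity: $S \cong (S^d_1 - e)^*$ yields $S^* \cong S^d_1 - e$, whose $l$-face volumes attain the bound by Theorem~\ref{main_volume}(a). For part (b) with $l \in \{1, d\}$, equality in the derived formula forces equality in both the barycentric-product bound and the face-volume bound for $S$; combining the uniqueness in Theorem~\ref{main_bc_bound}(c) (which pins down the full barycentric tuple) and Theorem~\ref{main_volume}(b) (which identifies $S^* \cong S^d_1$ up to the translation sending $o$ into the interior) yields $S \cong (S^d_1 - e)^*$.

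The principal obstacle is the lattice-normalization step in the face formula: while continuous $l$-dimensional volumes dualize cleanly via the Mahler identity, the lattice determinants $\det(\Z^d \cap L)$ do not, and pinning down the sublattice reciprocity that produces the clean factor involving the face of $S$ — and consequently the exact constant $2/(l!(s_{d-l+1}-1))$ — is where the bulk of the technical work lies. A secondary subtlety in part (b) is that $S^*$ need not be integral \emph{a priori}, so that identifying $S^*$ with $S^d_1$ must proceed through the extremality conditions on the $\beta_j$'s and on the face volume of $S$ rather than by applying Theorem~\ref{main_volume}(b) to $S^*$ as an element of $\Sd{1}$.
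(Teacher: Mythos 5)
Your opening observation (that $o$ has the same barycentric coordinates with respect to $S^{\ast}$ as with respect to $S$) is correct and is exactly the paper's Proposition~\ref{dual_same_bc}, but from there your plan has a genuine gap precisely where you locate "the bulk of the technical work". No dual-face-volume formula via an $l$-dimensional Mahler identity and sublattice reciprocity is needed, and it is doubtful that the lattice determinants of $\Z^d\cap L$ and $\Z^d\cap L^{\perp}$ combine into the clean product you posit; moreover your claim that the bound is then obtained by combining the product bound on $\beta_{d-l+1}\cdots\beta_d$ \emph{with} Theorem~\ref{main_volume} applied to a face of $S$ cannot be right, since the right-hand side of \eqref{eq:dual_vol} is exactly $\frac{1}{l!}$ times the reciprocal of the minimal value of $\beta_{d-l+1}\cdots\beta_d$ (Theorem~\ref{thm_location}(e) with $n=d+1$), with no extra face-volume factor. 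The missing idea is that Theorem~\ref{thm_faces_bc} is stated for simplices with \emph{rational} vertices and a single interior integral point, so it applies verbatim to the rational simplex $S^{\ast}$ (which has $\intr{S^{\ast}}\cap\Z^d=\{o\}$ by Proposition~\ref{dual_one_ilp}); together with \eqref{eq:face_vs_bc_max} and Theorem~\ref{thm_location}(e) this gives \eqref{eq:dual_vol} in two lines, and (a) follows since $S^{\ast}\cong S^d_1-e$ has faces attaining the bound.

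Part (b) of your proposal is also incomplete, and internally inconsistent: you first invoke Theorem~\ref{main_volume}(b) to identify $S^{\ast}\cong S^d_1$, then correctly note in your last paragraph that this is illegitimate because $S^{\ast}$ need not be integral, but you never supply the substitute argument. (Also, the extremal tuple here is the one minimizing $\beta_d$, so the relevant uniqueness statement is Theorem~\ref{main_bc_bound}(b), not (c); by Remark~\ref{rem_not_unique_bc} the minimizing \emph{simplices} are not unique for $i<d+1$, so knowing the tuple does not pin down $S$ by itself.) The paper's equality analysis works on $S$: the extremal barycentric tuple plus Lemma~\ref{lem_hyperplane_intersection} show $S\cong\conv\bigl((T^{d-1}_{1,d}\times\{0\})\cup\{(a,h),-(a,h)\}\bigr)$; for $l=d$ the identity of Proposition~\ref{mahler_bary} gives $\vol(S^{\ast})=\frac{2(s_d-1)^2}{d!\,h}$, so equality forces $h=1$; for $l=1$ a separate argument (the longest edge of $S^{\ast}$ has lattice length $2(s_d-1)$, and the radius/support-function relation $\rho(S^{\ast},b)h(S,b)=1$ forces $S\subseteq\R^{d-1}\times[-1,1]$) again forces $h=1$; finally the explicit unimodular computation of Lemma~\ref{lem_dual_to_s1d} shows that the resulting $S$ satisfies $S^{\ast}\cong S^d_1$, i.e.\ $S\cong(S^d_1-e)^{\ast}$. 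None of these steps appear in your proposal, so both the inequality and the equality characterization remain unproved as written.
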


We remark that for a simplex $S \in \Sd{1}$, where $d \ge 4$, which satisfies $o \in \intr{S}$ 
and $S \cong \conv ((T \times \{0\}) \cup \{\pm e_d\})$, we have $S^{\ast} \cong S_1^d$.

\subsection*{Results on $\Pd{1}$}\label{ssc:ac_results}

Using Theorem~\ref{main_bc_bound}, we can give a sharp bound on the coefficient of asymmetry for polytopes containing exactly one interior integral point.

\begin{theorem}\label{thm_ca}
 Let $P \in \Pd{1}$, where $d \in \N$, and $o \in \intr{P}$. Then
\[
 \ca(P,o) \le s_{d+1} - 2.
\]
Furthermore, equality holds if and only if $P \cong T^d_{1,d+1}$.
\end{theorem}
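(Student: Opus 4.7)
The plan is to reduce Theorem~\ref{thm_ca} to the simplex case already handled by Theorem~\ref{main_bc_bound}(c). Recall from \eqref{eq:proving_hensley} that for any $S \in \Sd{1}$ with interior integral point $o$ one has $\ca(S, o) = 1/\beta_{d+1} - 1$, where $\beta_{d+1}$ is the smallest barycentric coordinate of $o$ in $S$. Combining this with Theorem~\ref{main_bc_bound}(c) (the case $i = d+1$) gives $\ca(S, o) \le s_{d+1} - 2$, with equality iff $S \cong T^d_{1,d+1}$. It therefore suffices to inscribe a suitable $S \in \Sd{1}$ in $P$ with $o \in \intr{S}$ and $\ca(S, o) \ge \ca(P, o)$.

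To construct $S$, I would first pick a unit vector $u^* \in \R^d$ realizing $\ca(P, o) = \rho(P, u^*)/\rho(P, -u^*)$ together with the boundary points $a = \rho(P, u^*) u^*$ and $b = -\rho(P, -u^*) u^*$ of $P$ on the line through $o$ in this direction. Applying Carath\'eodory's theorem within the smallest faces of $P$ containing $a$ and $b$ represents both $a$ and $b$ as strict convex combinations of affinely independent subsets of $\vertset{P}$; a further Carath\'eodory reduction on the combined combination $o = \lambda a + (1-\lambda) b$ then produces $d+1$ affinely independent vertices $v_0, \ldots, v_d \in \vertset{P}$ whose convex hull $S$ is a $d$-simplex satisfying $a, b \in S$ and $o \in \intr{S}$. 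Because $a \in S \subseteq P$ we have $\rho(S, u^*) = \rho(P, u^*)$, and similarly $\rho(S, -u^*) = \rho(P, -u^*)$; hence $\ca(S, o) \ge \ca(P, o)$.

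The next step is to ensure $S \in \Sd{1}$. The vertices are integral by construction, and since $S \subseteq P$ with $\intr{P} \cap \Z^d = \{o\}$, every integral point of $\intr{S}$ must either equal $o$ or lie on $\bd{P}$. If an extraneous integral point $q \in \bd{P} \cap \intr{S}$ occurs, I would use an exchange argument: replacing one of the vertices $v_i$ by $q$ yields a strictly smaller inscribed simplex that still contains $o$ in its interior and still maintains $\ca(S, o) \ge \ca(P, o)$; minimizing the number of interior integral points over all inscribed simplices with $a, b \in S$ and $o \in \intr S$ then forces this set to equal $\{o\}$, producing an $S \in \Sd{1}$.

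Invoking Theorem~\ref{main_bc_bound}(c) yields $\ca(P, o) \le \ca(S, o) \le s_{d+1} - 2$. For the equality case, $\ca(P, o) = s_{d+1} - 2$ forces $\ca(S, o) = s_{d+1} - 2$ and hence $S \cong T^d_{1,d+1}$ by Theorem~\ref{main_bc_bound}(c); to upgrade this to $P \cong T^d_{1,d+1}$, I would show by a direct lattice-point count, using the Sylvester identity $1/s_1 + \cdots + 1/s_d = 1 - 1/(s_{d+1}-1)$, that any polytope in $\Pd{1}$ strictly containing $T^d_{1,d+1}$ must acquire an additional interior integral point, a contradiction. The main obstacle will be the $\Sd{1}$-reduction: simultaneously enforcing exactly one interior integral point while retaining $a$ and $b$ (hence the full ratio) in $S$ is the delicate combinatorial point, and turning the informal exchange/minimality idea into a rigorous argument is where the real work lies.
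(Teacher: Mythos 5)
Your first paragraph is fine and matches the paper's use of \eqref{eq:proving_hensley} together with Theorem~\ref{main_bc_bound}, but the reduction to an inscribed simplex of $\Sd{1}$ containing both $a$ and $b$ — which you yourself identify as ``where the real work lies'' — is a genuine gap, and not a routine one. The ``further Carath\'eodory reduction'' applied to the combined representation of $o$ discards points, so the resulting simplex need not contain $a$ or $b$ anymore (nor need the union of the two vertex sets be full-dimensional, since the affine hull of the minimal faces containing $a$ and $b$ can be a proper subspace). The exchange step suffers from the same defect: the cells of the star subdivision of $S$ at an extraneous point $q \in \bd{P} \cap \intr{S}$ all contain $q$, but no single cell need contain both $a$ and $b$, let alone $o$ in its interior, so there is in general no vertex whose replacement by $q$ preserves $a,b \in S$ and $o \in \intr{S}$; hence your minimization over inscribed simplices is not known to terminate at an element of $\Sd{1}$ with the required properties. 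The paper avoids this entirely by a different construction: by Proposition~\ref{prop_alt_ca} the asymmetry is attained along a segment from a \emph{vertex} $v$ of $P$ through $o$ to a point $u \in \bd{P}$; Carath\'eodory's theorem inside a proper face of $P$ containing $u$ gives an integral simplex $T \subseteq \bd{P}$ of dimension at most $d-1$ with $u \in \relintr{T}$, and for $S := \conv(T \cup \{v\})$ one automatically has $\relintr{S} \subseteq \intr{P}$ (a supporting hyperplane of $P$ meeting $\relintr{S}$ would contain all of $S$, hence $o$), so $o$ is its only relative-interior lattice point with no cleanup needed; moreover $\ca(P,o) = (1-\beta)/\beta$ where $\beta$ is the barycentric coordinate of $o$ attached to $v$, and Theorem~\ref{main_bc_bound} applies in dimension $\dim S \le d$, the bound being monotone in the dimension through the Sylvester sequence.

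The equality case in your outline rests on a false statement: it is not true that every polytope in $\Pd{1}$ strictly containing a copy of $T^d_{1,d+1}$ acquires a second interior integral point. Already for $d=2$, the quadrilateral $\conv(\{(0,0),(2,0),(2,1),(0,3)\})$ strictly contains $T^2_{1,3} = \conv(\{(0,0),(2,0),(0,3)\})$ and still has $(1,1)$ as its only interior integral point; analogous examples arise by adding a vertex just beyond the unique facet of $T^d_{1,d+1}$ whose relative interior is lattice-free (the facet opposite the vertex $o$), so no lattice-point count alone can close this step. What makes the paper's equality argument work is the additional geometric information that the exit point $u$ of the maximizing segment lies in $\bd{P}$ and in the relative interior of precisely that lattice-free facet $T$ of $S \cong T^d_{1,d+1}$: every other facet of $S$ contains an integral point in its relative interior, so any $y \in P \setminus S$ would force the relative interior of such a facet into $\intr{P}$, contradicting $\intr{P} \cap \Z^d = \{o\}$. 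To repair your proof you would have to keep track of where the segment realizing $\ca(P,o)$ meets $\bd{S}$ and use it in exactly this way.
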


This shows that the bound on the coefficient of asymmetry conjectured by Hensley for $\Sd{1}$ remains valid for $\Pd{1}$.
From Theorem~\ref{thm_ca}, we derive a bound on the volume of an arbitrary integral polytope $P$ which has exactly one integral point in its interior.

\begin{theorem}\label{thm_vol_by_ca}
 Let $P \in \Pd{1}$. Then
\[
 \vol(P) \le (s_{d+1} - 1)^d.
\]
\end{theorem}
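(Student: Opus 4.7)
The plan is to combine Theorem~\ref{thm_ca} with Minkowski's first theorem and the Brunn--Minkowski inequality, both applied to the difference body $P - P$. I would first translate $P$ by a suitable integer vector so that $\intr{P} \cap \Z^d = \{o\}$; since integer translation preserves both integrality of the vertices and the volume, this is harmless for the inequality to be proved. Theorem~\ref{thm_ca} then yields $\ca(P, o) \le s_{d+1} - 2 =: c$.

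The next step would be to convert the asymmetry bound into a set inclusion. Directly from the definition of $\ca$, one has $\rho(P, -u) \le c \, \rho(P, u)$ for every direction $u$, which is the same as $-P \subseteq cP$. Using convexity of $P$ and $o \in P$, I would then deduce that $P - P = P + (-P) \subseteq P + cP = (c+1)P$, or equivalently $Q := \frac{1}{c+1}(P - P) \subseteq P$. The set $Q$ is a centrally symmetric convex body containing $o$ in its interior, and from $Q \subseteq P$ it follows that $\intr{Q} \cap \Z^d = \{o\}$. Minkowski's first theorem then gives $\vol(Q) \le 2^d$, so $\vol(P - P) \le 2^d (c+1)^d$. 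Finally, the Brunn--Minkowski inequality yields $\vol(P - P)^{1/d} \ge \vol(P)^{1/d} + \vol(-P)^{1/d} = 2 \vol(P)^{1/d}$, hence $\vol(P - P) \ge 2^d \vol(P)$. Combining the two bounds on $\vol(P - P)$ produces the desired inequality $\vol(P) \le (c+1)^d = (s_{d+1} - 1)^d$.

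Once Theorem~\ref{thm_ca} is in hand there is essentially no obstacle, since Minkowski's first theorem and the Brunn--Minkowski inequality are entirely standard tools from the geometry of numbers and convex geometry. The only point deserving explicit care is the equivalence between $\ca(P, o) \le c$ and the inclusion $-P \subseteq cP$, but this is immediate from the definitions of $\ca$ and of the radial function $\rho$ recorded in the notation subsection. Note also that the bound obtained in this way is not tight in general (for $d \ge 2$), which is consistent with the fact that the extremal configurations for $\ca$ and for $\vol$ in $\Pd{1}$ need not coincide.
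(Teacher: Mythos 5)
Your proof is correct, and it reaches the same key inequality $\vol(P) \le (1 + \ca(P,o))^d$ that the paper uses -- but by a genuinely different route at the final step. The paper simply invokes Mahler's theorem (Theorem~\ref{thm_mahler}), which is already stated in the background section precisely for this purpose, and then applies Theorem~\ref{thm_ca}. You instead re-derive that bound from first principles: the inclusion $-P \subseteq cP$ gives $\frac{1}{c+1}(P-P) \subseteq P$, Minkowski's first theorem applied to this symmetric body gives $\vol(P-P) \le 2^d(c+1)^d$, and Brunn--Minkowski gives $\vol(P-P) \ge 2^d\vol(P)$. What you have written is in effect a self-contained proof of Mahler's theorem itself (restricted to polytopes). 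Both arguments are fine; the paper's version is a one-liner because the heavy lifting is deferred to a cited background result, while yours makes the mechanism transparent at the cost of a few extra lines. One small point: as stated in the paper, Theorem~\ref{thm_minkowski} requires $C \cap \Z^d = \{o\}$, whereas you deduce only $\intr{Q} \cap \Z^d = \{o\}$; this gap is harmless (shrink $Q$ slightly and take a limit, or use the standard form of Minkowski's theorem for interior lattice points), but if you want to quote the paper's Theorem~\ref{thm_minkowski} verbatim you should say a word about it.
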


Asymptotically, Theorem~\ref{thm_vol_by_ca} is an improvement of the known best bound $\vol(P) = 2^{O(d4^d)}$ of Pikhurko \cite{MR1996360}
to the bound $\vol(P) = 2^{O(d2^d)}$.

\subsection*{Bounds on the lattice diameter of integral polytopes}\label{ssc:diameter_results}

We now turn our attention to $\Pd{k}$ for $k \ge 1$ and deduce upper bounds on the lattice diameter $\ld{P}$ for $P \in \Pd{k}$.

\begin{theorem}\label{general_ld_bound}
Let $d \in \N$ and let $P \subseteq \R^d$ be a $d$-dimensional integral polytope such that 
$P' := \conv \bigl(\intr{P} \cap \Z^d \bigr) \neq \emptyset$. Let $m := \ld{P'}$. Then
\[
 \ld{P} \le (m + 2) (s_d - 1).
\]
Furthermore, equality holds if and only if $P \cong S^d_{m+1}$.
\end{theorem}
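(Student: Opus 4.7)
Fix a rational line $g$ with $|P\cap g\cap\Z^d|=\ld{P}+1$ and, by applying a unimodular transformation, assume $g=\R e_d$ with the lattice points of $g\cap P$ given by $p_i=i\,e_d$ for $i=0,\dots,N$, where $N=\ld{P}$. Since $P'\neq\emptyset$, pick an interior integer point $q\in\intr{P}\cap\Z^d$, let $g_q:=q+\R e_d$ be the line through $q$ parallel to $g$, and write $M+1$ for the number of lattice points of $P$ on $g_q$. By convexity the integer points of $P$ on $g_q\cap\intr{P}$ are collinear elements of $P'$, while at most two lattice points on $g_q\cap P$ can lie on $\bd{P}$; hence $M\le\ld{P'}+2$, and it suffices to prove the sharper estimate $N\le M(s_d-1)$.

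I would then split into two cases according to whether $g$ itself meets $\intr{P}\cap\Z^d$. In the easy case $g\cap\intr{P}\cap\Z^d\neq\emptyset$, let $p_a,\dots,p_b$ be the maximal run of consecutive interior integer points of $P$ on $g$; these form a collinear subset of $P'$, so $b-a\le\ld{P'}$, while the boundary stretches $p_0,\dots,p_{a-1}$ and $p_{b+1},\dots,p_N$ each contain at most $s_d-1$ lattice points by the coefficient-of-asymmetry estimate of Theorem~\ref{thm_ca} (or the sharper barycentric bound of Theorem~\ref{main_bc_bound}) applied at $p_a$ and $p_b$; summing yields $N\le(\ld{P'}+2)(s_d-1)$. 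In the harder case $g\cap\intr{P}\cap\Z^d=\emptyset$, realized by $S^d_k$ when $g$ lies on a facet, I inscribe into $P$ a $d$-simplex $T=\conv(\{p_0,p_N,v_1,\dots,v_{d-1}\})$ whose vertices $v_1,\dots,v_{d-1}$ are chosen among the lattice points of $P$ so that $q\in\intr{T}$ and every interior integer point of $T$ lies on $g_q\cap T$. The edge $[p_0,p_N]$ of $T$ has lattice length $N$, so applying the face-volume bound of Theorem~\ref{main_volume} in its natural $\Sd{k}$-analogue---namely, that every simplex in $\Sd{k}$ has longest edge of lattice length at most $(k+1)(s_d-1)$, with $k$ the number of interior integer points of $T$---gives $N\le(k+1)(s_d-1)\le M(s_d-1)\le(\ld{P'}+2)(s_d-1)$.

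For the equality characterization, every inequality above must be sharp. The uniqueness clauses of Theorem~\ref{main_bc_bound}(c) and Theorem~\ref{main_volume}(b) force the inscribed simplex $T$ to be unimodularly equivalent to $S^d_k$ with $k=\ld{P'}+1$; a rigidity argument based on $T\subseteq P$, the common $\ld$-extremal line $g$, and the shared set of interior integer points then upgrades this to $P\cong S^d_k$. The principal obstacle is the $\Sd{k}$-generalization of Theorem~\ref{main_volume}: proving that every $d$-simplex in $\Sd{k}$ has longest edge of lattice length at most $(k+1)(s_d-1)$, together with the construction of the inscribed simplex $T$ with interior integer count bounded by $M-1$. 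The telescoping identity $s_d-1=s_1s_2\cdots s_{d-1}$ arising from the Sylvester recursion enters here as the precise amplification factor by which the length of $g\cap P$ may exceed that of $g_q\cap P$ in the extremal configuration, and this is ultimately what pins down the Sylvester sequence in the final bound.
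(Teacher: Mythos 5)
Your proposal takes a genuinely different route from the paper, but the route has a fatal gap in exactly the place you flag as "the principal obstacle."

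The paper does not work directly with lines in $\R^d$. Instead it projects $P$ along the extremal line $l$ onto a $(d-1)$-dimensional polytope $Q = \pi(P)$, shows that $o := \pi(l)$ must lie on $\bd{Q}$ whenever $\ld{P} > \ld{P'}+2$ (otherwise the fiber over $o$ produces too many collinear interior integral points), invokes Lemma~\ref{lem_capturing_simplex} to obtain a simplex $S \subseteq Q$ of dimension $h \le d-1$ with $o$ as a vertex and \emph{exactly one} interior $\pi(\Z^d)$-point $q$, and then runs Jensen's inequality on the concave fiber-length function $f(x) = \len(\pi^{-1}(x) \cap P)$ to get $\ld{P} \cdot \lambda_0 \le f(q) \le \ld{P'}+2$. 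Since $\lambda_0$ is a barycentric coordinate of the single interior point of a simplex in $\mathcal{S}^h(1)$ with $h \le d-1$, Theorem~\ref{main_bc_bound} applies \emph{in its $k=1$ form}, giving $\lambda_0 \ge 1/(s_d-1)$. The dimension drop is precisely what lets the authors get by with the $k=1$ barycentric bound.

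Your proposal has two problems. First, the easy case is vacuous in the non-trivial regime: the paper's boundary argument shows that when $\ld{P} > \ld{P'}+2$ the extremal line $g$ cannot pass through $\intr{P} \cap \Z^d$ at all (if it did, $\pi(g)$ would be interior to $Q$ and the fiber over it would contain $\ge \ld{P'}+2$ collinear interior integral points), so your Case 1 never occurs when there is something to prove. Moreover, the justification offered there—applying Theorem~\ref{thm_ca} or \ref{main_bc_bound} "at $p_a$"—is not valid anyway, since those theorems require $|\intr{P}\cap\Z^d|=1$, and the coefficient of asymmetry of, say, $S^d_k$ about its near-vertex interior point is unbounded in $k$. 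Second, and more seriously, Case 2 rests on the asserted "natural $\mathcal{S}^d(k)$-analogue of Theorem~\ref{main_volume}," namely that every $T \in \mathcal{S}^d(k)$ (with collinear interior integral points) has longest edge of lattice length at most $(k+1)(s_d-1)$. This is not an available result: the machinery of Theorems~\ref{thm_sum_prod_bc} and~\ref{thm_faces_bc} and the whole $\Chi^n$ analysis is carried out only for $k=1$, the paper explicitly leaves the maximal-volume question in $\mathcal{S}^d(k)$ for $k\ge 2$ open, and—worse—the edge-length bound you need for $T$ is exactly Theorem~\ref{general_ld_bound} specialized to $T$ (since collinear interior points give $\ld{T'}=k-1$). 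So the argument is circular. The paper's projection-plus-Jensen step is the idea that removes this obstacle, and your proposal is missing it.
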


For $P$ as in Theorem~\ref{general_ld_bound}, consider $G(P):= |P \cap \Z^d|$, the so-called \emph{lattice-point enumerator} of $P$. 
This can be bounded in terms of the lattice diameter, as $\ld{P} + 1 \le G(P) \le (\ld{P}+1)^d$\label{lpe_inequalities}. The lower bound follows from the definitions of $\ld{P}$
and $G(P)$, while the upper bound can be proved using the so-called parity argument; see \rescite{MR1030772}{Theorem 1}.
Thus, in view of Theorem~\ref{general_ld_bound}, the following inequalities hold:
\begin{align}
G(P)^{1/d} -1 \le \ld{P} \le (\ld{P'} + 2) (s_d-1) \le (G(P') + 1) (s_d-1), \label{eq:chain_of_ineq}
\end{align}
where $P'$ is defined as in Theorem~\ref{general_ld_bound}. Thus, each of the two values $G(P)$ and $\ld{P}$ can be bounded in terms
of each of the two values $G(P')$, $\ld{P'}$.
Note that for $m = \ld{P'}$, the interior points of $S^d_{m+1}$ are collinear, that is, for $P=S^d_{m+1}$ one has $\ld{P'}+1 = G(P')$. 
Hence, the equality $\ld{P} = (G(P') + 1) (s_d-1)$ is attained if and only if $P \cong S^d_{m+1}$, as follows from the characterization of the equality case 
in Theorem~\ref{general_ld_bound}.
In contrast to this, the inequalities linking $G(P)$ to $G(P')$ and $\ld{P'}$, respectively, are most likely not tight.

Theorem~\ref{general_ld_bound} can be carried over to polytopes in $\Plm$. The bound we obtain is similar to the one obtained
in Theorem~\ref{general_ld_bound} in the sense that $\ld{P'} + 2 = 1$
if $P$ does not contain any interior integral points. First, observe that if, for some rational line $g$ passing through the origin, 
the polyhedron $P + g$ is lattice-free, then no finite upper bound on the lattice diameter of $P$ can given, because the lattice diameter of 
$P+g$ is infinite and $P$ can be extended to a lattice-free polytope of an arbitrarily large lattice diameter by `adding' sufficiently many integral points from 
$P+g$. On the other hand, the following theorem shows that in all other cases there exists a bound on $\ld P$, which depends only on $d$.

\begin{theorem}\label{lpf_ld_bound}
Let $P \in \Plm$. 
Then the lattice diameter of $P$ is at most $s_d - 1$.
Furthermore, the equality $\ld{P} = s_d-1$ holds if and only if $P \cong S^d_0$.
\end{theorem}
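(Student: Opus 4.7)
Since the paper adopts the convention $\ld{\emptyset} = -1$, the bound $s_d - 1$ equals $(\ld{\emptyset} + 2)(s_d - 1)$, matching the formula in Theorem~\ref{general_ld_bound} under the formal substitution $P' = \emptyset$. This strongly suggests that Theorem~\ref{lpf_ld_bound} should be viewed as an extension of Theorem~\ref{general_ld_bound} to the lmax case, with the lmax property playing the role of ``existence of an interior integer point''; my plan is accordingly to mirror the proof of Theorem~\ref{general_ld_bound}, using a virtual interior integer point supplied by the lmax condition.

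After a unimodular transformation, I may assume that $\ell := \ld{P}$ is attained along the $e_d$-axis, so that the consecutive lattice points $o, e_d, \ldots, \ell e_d$ all lie in $P$, and since $P$ is lattice-free, on $\partial P$. Applying the lmax condition to the line $\R e_d$ produces an integer point $y \in \intr{\pi(P)} \cap \Z^{d-1}$, where $\pi\colon \R^d \to \R^{d-1}$ is the coordinate projection onto the first $d - 1$ coordinates. The fiber $\pi^{-1}(y) \cap P = \{y\} \times [a,b]$ satisfies $a < b$ (because $y \in \intr{\pi(P)}$ and $P$ is $d$-dimensional) and $b - a \le 1$ (because $P$ is lattice-free, so $(a,b)$ contains no integer). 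Any point $(y, t^\ast)$ with $t^\ast \in (a,b)$ serves as a virtual interior integer point. Inside a simplex $T \subseteq P$ whose vertices include $o$, $\ell e_d$ and $d-1$ further vertices of $P$, the barycentric coordinates of $(y, t^\ast)$ with respect to $T$ satisfy the Sylvester-type inequalities from Section~\ref{sec:auxiliary}, from which the bound $\ell \le s_d - 1$ follows. The main obstacle is that a naive augmentation $\widetilde{P} := \conv(P \cup \{(y, z_d)\}) \in \Pd{1}$ followed by Theorem~\ref{general_ld_bound} yields only $\ld{P} \le \ld{\widetilde{P}} \le 2(s_d - 1)$, off by a factor of $2$; eliminating this factor demands a direct adaptation of the barycentric argument to the lmax setting rather than passing through the generic $\Pd{1}$ setting of $\widetilde{P}$.

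For the uniqueness statement, suppose $\ld{P} = s_d - 1$. Then all intermediate inequalities in the above chain must be tight. The equality characterization of Theorem~\ref{main_bc_bound}(c), combined with the number-theoretic rigidity of the Sylvester sequence, forces the barycentric coordinates of the virtual interior point into the unique extremal pattern in $\Yzahl{d+1}$, which in turn pins down the vertex structure of $P$ and yields $P \cong S^d_0$.
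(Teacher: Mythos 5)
Your proposal correctly identifies the first two ideas in the paper's argument: after a unimodular change of coordinates making the lattice-diameter direction $e_d$, the lmax hypothesis applied to the line $\R e_d$ yields an integer point $y \in \intr{\pi(P)} \cap \Z^{d-1}$, and the lattice-freeness of $P$ forces the fiber $\pi^{-1}(y) \cap P$ to have length at most~$1$. You also correctly diagnose that the naive lifting to a polytope in $\Pd{1}$ only gives the bound $2(s_d - 1)$. However, the alternative you sketch does not close that gap, and this is where your proof breaks.

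The decisive issue is the ``virtual interior integer point''. You propose to apply the Sylvester-type product--sum inequalities to the barycentric coordinates of a non-integral point $(y, t^\ast)$ inside a $d$-simplex $T \subseteq P$. But Theorem~\ref{thm_sum_prod_bc} (and hence Theorem~\ref{main_bc_bound}, which is built on it) is a statement about the barycentric coordinates of an actual \emph{integer} point of an integral simplex having exactly one interior integer point; it has no content for a fabricated non-lattice point, and there is no reason for the inequalities $\beta_1 \cdots \beta_j \le \beta_{j+1}+\cdots+\beta_{d+1}$ to hold for $(y,t^\ast)$. Moreover, even if some bound of that form did hold for a $d$-dimensional $T$, it would produce a constant of type $s_{d+1}-1$ rather than $s_d - 1$; the dimension-reduction to $d-1$ is essential for the sharp constant, and your outline never performs it.

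The paper's proof supplies exactly the two missing ingredients. First, it works in the projection: applying Lemma~\ref{lem_capturing_simplex} to the $(d-1)$-dimensional integral polytope $Q := \pi(P)$ (whose interior contains $y$) produces a simplex $S \subseteq Q$ of some dimension $h \le d-1$ with $o \in \vertset{S}$ and with a \emph{genuine} integer point $q$ as its unique relative-interior lattice point. Since $q$ is integral and $\dim S \le d-1$, Theorem~\ref{main_bc_bound} applies and gives $\lambda_0 \ge 1/(s_d - 1)$ for the barycentric coordinate $\lambda_0$ of $q$ at the vertex $o$. Second, it uses concavity of the fiber-length function $f(x) := \len(\pi^{-1}(x)\cap P)$ on $Q$: by Jensen's inequality,
\begin{align*}
1 \ \ge\ f(q) \ = \ f\left( \sum_{i=0}^h \lambda_i v_i \right) \ \ge\ \sum_{i=0}^h \lambda_i f(v_i) \ \ge\ \lambda_0\, f(o) \ \ge\ \lambda_0\, \ld{P},
\end{align*}
so $\ld{P} \le 1/\lambda_0 \le s_d - 1$. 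Your proposal contains neither the projection-level simplex nor the concavity/Jensen step. As a consequence the equality analysis is also unsupported: without this concrete chain of inequalities one cannot identify when each step is tight, so the reduction to the rigid pattern in $\Yzahl{d}$ and the subsequent reconstruction $P \cong S^d_0$ do not follow from what you have written.
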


Similarly to Theorem~\ref{general_ld_bound}, also for a polytope $P$ as defined in Theorem~\ref{lpf_ld_bound} we deduce a bound on the lattice point enumerator $G(P)$. 
One has $G(P) \le (s_d)^d \le 2^{d 2^{d-1}}$.
As a consequence, one can deduce a bound on the volume of polytopes in $\Plm$.

\begin{corollary}\label{cor_vol_lpf}
 Let $P \in \Plm$, where $d \in \N$. Then
\[
 \vol(P) \le 2^{2^{2 d +o(d)}}.
\]
\end{corollary}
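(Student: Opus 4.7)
The plan is to deduce the volume bound from Theorem~\ref{lpf_ld_bound} together with a passage from the number of integer points to the Euclidean volume. By Theorem~\ref{lpf_ld_bound}, every $P\in\Plm$ satisfies $\ld{P}\le s_d-1$, and the parity argument recalled in the remark immediately preceding the corollary gives
\[
G(P):=|P\cap\Z^d|\le(\ld{P}+1)^d\le s_d^d\le 2^{d\cdot 2^{d-1}},
\]
where the last step uses the Sylvester-sequence estimate $s_d\le 2^{2^{d-1}}$, a consequence of the recursion defining $s_d$.

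The remaining step is to pass from this count of integer points to a bound on $\vol(P)$. Since $P$ is integral, every vertex of $P$ lies in $P\cap\Z^d$, so there are at most $G(P)$ of them. I would place one vertex at the origin by a unimodular transformation and then bound the coordinates of the other vertices: the lattice-diameter estimate controls the number of lattice points on each integer segment inside $P$, while the lineality-maximality of $P$ prevents $P$ from being ``thin'' in any rational direction (otherwise $P$ could be extended along such a direction without creating an interior integer point). Together these facts allow one to enclose $P$, after a suitable unimodular change of coordinates, in a box whose side length $N$ is bounded by a function of $s_d$ and $d$ satisfying $\log_2 N = 2^{2d+o(d)}$. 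The trivial estimate $\vol(P)\le(2N)^d$ then gives the claimed bound $\vol(P)\le 2^{2^{2d+o(d)}}$.

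The principal obstacle is that an edge of $P$ can be Euclidean-long while containing only its two integer endpoints, so the lattice-diameter bound does not \emph{a priori} control the Euclidean position of the vertices of $P$. The hypothesis $P\in\Plm$ is precisely what excludes such skew configurations; quantifying this exclusion in a way compatible with the doubly exponential asymptotics of $s_d$ is the technical heart of the argument.
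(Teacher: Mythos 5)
Your first two steps coincide with the paper: Theorem~\ref{lpf_ld_bound} gives $\ld{P}\le s_d-1$, and the parity argument plus the estimate $s_d\le 2^{2^{d-1}}$ give $G(P)\le s_d^d\le 2^{d\cdot 2^{d-1}}$. The difficulty begins where you try to convert the count $G(P)$ into a Euclidean volume bound, and this is exactly the step your proposal does not actually carry out. You state that lineality-maximality of $P$ ``prevents $P$ from being thin'' and that one can therefore fit $P$ in a box of side length $N$ with $\log_2 N=2^{2d+o(d)}$, but you give no argument for this quantitative claim, and you yourself flag it as ``the technical heart of the argument.'' As it stands, that is a genuine gap: lattice-diameter and lattice-point-count bounds do not, by themselves, control Euclidean diameter (you correctly note the long-skew-edge obstruction), and the bare fact that $P+g$ is not lattice-free for rational lines $g$ is not obviously converted into a box bound without essentially re-deriving the known volume estimates for integral polytopes.

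The paper avoids this entirely by a different route, borrowed from \cite{MR2855866}: one shows that $P$ is contained in an integral polytope $Q$ whose number of interior integral points $k$ satisfies $1\le k\le G(P)$, and then invokes the already-known volume bound for $d$-dimensional integral polytopes with $k$ interior integral points, namely $\vol(Q)\le k\cdot 2^{2^{2d+o(d)}}$ from Pikhurko \cite{MR1996360}. Combining this with $G(P)\le 2^{d\cdot 2^{d-1}}$ (which is absorbed by the $o(d)$ in the exponent) yields $\vol(P)\le\vol(Q)\le 2^{2^{2d+o(d)}}$. So the key idea you are missing is the passage from $P$ (lattice-free, hence no direct volume control) to a slightly larger integral polytope $Q$ that \emph{does} have interior integral points, after which the machinery for $\Pd{k}$ takes over. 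If you want to salvage your own sketch, you would need to supply a concrete argument that lineality-maximality forces the claimed box enclosure, which is not easier than simply using the containment in $Q$.
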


For $\Pim$ the above bound was shown in \rescite{MR2855866}{Remark 3.10}. Note that $\Pim$ is a proper subset of $\Plm$ for every $d \ge 3$, as for instance 
the $d$-dimensional integral simplex $\conv(\{o, de_1, \ldots, de_{d-1}, (d-1)e_d\})$ is in $\Plm \setminus \Pim$. 
One can see, however, that the corresponding proof in \cite{MR2855866} can be applied for elements of $\Plm$ without any changes. Thus,
Corollary~\ref{cor_vol_lpf} is basically a result from \cite{MR2855866}. A bound of the same asymptotical order 
for all elements of $\Plm$ was also given in \rescite{MR2832401}{Theorem 2.1}; in their proof Nill and Ziegler rely on the results of Kannan and Lov\'{a}sz
\cite{MR970611}. 

\subsection*{Applications of results on $\Sd{1}$ in toric geometry}\label{sec:alg-geo}

This short excursion is intended for readers who are already familiar with algebraic and 
in particular toric geometry. We refer to the book \cite{toricbook} for a general background in toric geometry, and to \cite{gorst, ben} for more on toric Fano varieties and their singularities.

Fano varieties are objects of intense study in algebraic geometry. It is known that $d$-dimensional toric Fano varieties $X$ with 
$\varepsilon$-log terminal singularities form a bounded family; see \cite{MR1166957}. Therefore, it would be very interesting to find sharp upper bounds on their anticanonical degrees $(-K_X)^d$. In \cite{ben} $\Q$-factorial Gorenstein toric Fano varieties with Picard number $1$ were studied in detail. 
They correspond to {\em reflexive simplices} $S$, i.e., integral
simplices $S \in \Sd{1}$ with $S^\ast \in \Sd{1}$.
Recall that Gorenstein toric Fano varieties have canonical singularities; see \cite{MR1269718}. In this paper we extend the main results in \cite{ben} to $\Q$-factorial toric Fano varieties with Picard number $1$ and at most canonical singularities. Such varieties correspond one-to-one (up to 
isomorphisms) to elements of $\Sd{1}$, hence, the combinatorial results of this paper can be applied.

Let us note that $\Q$-factorial toric Fano varieties with Picard number $1$ are also called \textit{fake weighted projective spaces}; see  \cite{fake, MR2549542}. 
They are quotients of weighted projective spaces $\P(q_0, \ldots, q_d)$ by the action of a finite abelian group. 
In particular, the following two theorems hold for weighted projective spaces with canonical singularities.

\begin{theorem}
Let $X$ be a $d$-dimensional $\Q$-factorial toric Fano variety with Picard number one and at most canonical singularities.
\begin{itemize}
\item[(a)] If $d=2$, then
\[(-K_X)^2 \leq 9,\]
with equality if and only if $X \cong \P^2$. 
\item[(b)] If $d=3$, then
\[(-K_X)^3 \leq 72,\]
with equality if and only if $X \cong \P(3,1,1,1)$ or $X \cong \P(6,4,1,1)$.
\item[(c)] If $d \geq 4$, then
\[(-K_X)^d \leq 2 (s_d-1)^2,\]
with equality if and only if $X \cong \P\left(\frac{2 (s_d-1)}{s_1}, \ldots, \frac{2 (s_d-1)}{s_{d-1}}, 1, 1\right)$.
\end{itemize}
\label{ag1}
\end{theorem}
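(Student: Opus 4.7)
The proof rests on the standard dictionary between $d$-dimensional $\Q$-factorial toric Fano varieties $X$ of Picard number one with at most canonical singularities and integral simplices $S \in \Sd{1}$ with $o \in \intr{S}$: $S$ is the convex hull of the primitive ray generators of the fan of $X$, and the anticanonical degree is
\[
(-K_X)^d \;=\; d!\,\vol(S^{\ast}),
\]
where $S^{\ast}$ is rational in general and integral exactly when $X$ is Gorenstein (see \cite{ben,fake}). Each statement of Theorem~\ref{ag1} thus reduces to a sharp upper bound on $\vol(S^{\ast})$ for $S \in \Sd{1}$ with $o \in \intr{S}$, together with the identification of the extremal simplices among (fake) weighted projective spaces.

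For $d \ge 4$ the proof of (c) is essentially immediate: Theorem~\ref{thm_dual_face_volumes} with $l = d$ gives $\vol(S^{\ast}) = \vol_\Z(S^{\ast}) \le 2(s_d-1)^2/d!$ (since $S^{\ast}$ is full-dimensional and $s_{d-l+1}-1=1$), hence $(-K_X)^d \le 2(s_d-1)^2$. The equality clause of Theorem~\ref{thm_dual_face_volumes}(b) applies because $l = d \in \{1,d\}$ and forces $S \cong (S^d_1 - e)^{\ast}$, so after translation $S^{\ast} \cong S^d_1$. The unique interior integral point of $S^d_1$ is $e = e_1+\cdots+e_d$, and its barycentric coordinates work out, via the Sylvester identity $\sum_{i=1}^{d-1}\tfrac{1}{s_i}+\tfrac{1}{s_d-1}=1$, to
\[
\Bigl(\tfrac{1}{2(s_d-1)},\,\tfrac{1}{s_1},\ldots,\tfrac{1}{s_{d-1}},\,\tfrac{1}{2(s_d-1)}\Bigr).
\]
Clearing the common denominator $2(s_d-1)$ yields the weight vector in (c); each $s_i$ divides $2(s_d-1)$, so the fan lattice equals $\Z^d$ and $X$ is the honest (Gorenstein) weighted projective space claimed.

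For $d = 3$, Theorem~\ref{thm_dual_face_volumes} is not available. In the Gorenstein subcase $S^{\ast}\in \mathcal{S}^3(1)$ is itself integral and Theorem~\ref{main_volume} with $l = d$ applies to $S^{\ast}$, giving $\vol(S^{\ast})\le 12$ and hence $(-K_X)^3\le 72$; the non-Gorenstein case reduces to the Gorenstein one by refining the fan of $X$ to a Gorenstein fan, which produces a Fano $X'$ with $(-K_{X'})^3\ge(-K_X)^3$. Uniqueness at equality is not supplied by Theorem~\ref{main_volume}(b), which needs $d\ge 4$, so one enumerates by hand the sorted tuples $(\beta_1,\beta_2,\beta_3,\beta_4)$ of barycentric coordinates of the interior integer point of $S^{\ast}$ satisfying the Sylvester-type inequalities from Section~\ref{sec:auxiliary} together with the integrality constraint $72\beta_i\in\N$ forced by $\vol(S^{\ast})=12$. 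This finite search, combined with the existence check $(\sum q_i)^3/(\prod q_i \cdot m) = 72$ for some integer $m \ge 1$, leaves precisely $(\tfrac12,\tfrac13,\tfrac{1}{12},\tfrac{1}{12})$ with weights $(6,4,1,1)$ and $(\tfrac12,\tfrac16,\tfrac16,\tfrac16)$ with weights $(3,1,1,1)$, matching $\P(6,4,1,1)$ and $\P(3,1,1,1)$ respectively. Part (a) is the classical $d = 2$ case: the short enumeration of elements of $\mathcal{S}^2(1)$ with $o$ interior shows $\vol(S^{\ast})\le 9/2$ with equality only for $\P^2$.

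The main technical point is the $d=3$ case of (b): Theorem~\ref{thm_dual_face_volumes} is out of range, so one must handle the non-Gorenstein case via a Gorenstein refinement, and the uniqueness classification at $(-K_X)^3=72$ must be carried out by a finite enumeration of barycentric tuples combined with an existence check for the corresponding weighted projective space. Once this is in place, reading off weights from denominators and matching with the two named varieties is routine.
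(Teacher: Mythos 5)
For part (c) ($d\ge 4$), your argument coincides with the paper's: the toric dictionary $(-K_X)^d = d!\,\vol_\Z(S^\ast)$ and Theorem~\ref{thm_dual_face_volumes} with $l=d$ give the bound, and Theorem~\ref{thm_dual_face_volumes}(b) gives $S^\ast\cong S^d_1$ at equality. Where the paper simply cites \cite{ben} (Theorem~3.3 and Proposition~4.4) to pass from $S^\ast\cong S^d_1$ to the named weighted projective space, you work out the barycentric computation directly; this is fine, though the final claim that ``each $s_i$ divides $2(s_d-1)$, so the fan lattice equals $\Z^d$'' is not by itself a proof that $X$ is an honest (rather than fake) weighted projective space — the relevant condition is that the vertices of $S$ generate $\Z^d$, which is exactly what the cited results in \cite{ben} establish.

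The genuine divergence, and the genuine gap, is in $d\le 3$. The paper does not prove (a) and (b) at all; it disposes of $d\le 3$ by citing Kasprzyk's classification of canonical toric Fano threefolds \cite{MR2760660}. You instead attempt a direct argument. For $d=3$, your route is: handle the Gorenstein case via Theorem~\ref{main_volume}, then ``reduce the non-Gorenstein case to the Gorenstein one by refining the fan of $X$ to a Gorenstein fan, which produces a Fano $X'$ with $(-K_{X'})^3\ge(-K_X)^3$.'' This step is unjustified and in fact goes the wrong way: a fan refinement gives a proper birational morphism $f\colon X'\to X$ with $K_{X'}=f^\ast K_X+\sum a_iE_i$, $a_i\ge 0$ since $X$ has canonical singularities, so exceptional divisors can only \emph{decrease} the self-intersection of $-K$; moreover $X'$ typically fails to be Fano and certainly does not retain Picard number one, so neither Theorem~\ref{main_volume} nor the identification of extremal simplices applies to it. As written, the proof of (b) is broken.

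What is worth noting, and is not in the paper, is that a correct direct argument for the $d=3$ bound is available without any Gorenstein detour: the inequality in Theorem~\ref{thm_dual_face_volumes} with $l=d$ is derived from \eqref{eq:face_vs_bc_max} together with Theorem~\ref{thm_location}(e), and Theorem~\ref{thm_location}(e) holds for $n=d+1=4$. That gives $\vol_\Z(S^\ast)\le 1/(3!\,\beta_1\beta_2\beta_3)\le 12$, hence $(-K_X)^3\le 72$, with no integrality assumption on $S^\ast$. (The equality part of Theorem~\ref{thm_dual_face_volumes}(b) does require $d\ge 4$, so your finite enumeration of barycentric tuples would still be needed to recover the two extremal varieties.) If you replace the Gorenstein-refinement step with this observation and spell out the enumeration for the equality case, your (b) would become a self-contained proof that the paper does not give.
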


This extends Theorem~A in \cite{ben} to the non-Gorenstein situation. We expect these results to hold for Fano varieties with canonical singularities quite in general. 
We can also generalize Theorem~B in \cite{ben}. 

\begin{theorem}
Let $X$ be a $d$-dimensional $\Q$-factorial canonical toric Fano variety with Picard number one and at most canonical singularities.
Let $\mathcal{C}$ be the set of all torus-invariant integral curves on $X$. Then 
\[\max_{C \in \mathcal{C}} \; (-K_X).C \leq 2 (s_d-1),\]
with equality if and only if $X \cong \P\left(\frac{2 (s_d-1)}{s_1}, \ldots, \frac{2 (s_d-1)}{s_{d-1}}, 1, 1\right)$.
\label{ag2}
\end{theorem}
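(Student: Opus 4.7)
The argument translates the statement to a geometric one about the simplex via the correspondence underlying Theorem~\ref{ag1}: $X$ arises from a simplex $S \in \Sd{1}$ whose (primitive) vertices $v_1,\ldots,v_{d+1}$ are the ray generators of the fan of $X$. Translate so that the unique interior lattice point of $S$ is the origin and order the barycentric coordinates decreasingly as $\beta_1 \ge \ldots \ge \beta_{d+1}$; then $\sum_k \beta_k v_k = 0$ and $\sum_k \beta_k = 1$. Let $q_1,\ldots,q_{d+1}$ be the positive integers proportional to $\beta_1,\ldots,\beta_{d+1}$ with $\gcd(q_k)=1$. If $v_1,\ldots,v_{d+1}$ generate the ambient lattice $N$ then $X \cong \mathbb{P}(q_1,\ldots,q_{d+1})$; otherwise $X$ is a quotient of this weighted projective space by a nontrivial finite abelian group $G = N/\langle v_1,\ldots,v_{d+1}\rangle_{\mathbb{Z}}$.

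Every torus-invariant integral curve $C$ on $X$ corresponds to a wall $\tau_{ij}$ of the fan, i.e., to an unordered pair $\{i,j\} \subseteq \{1,\ldots,d+1\}$. The standard intersection formula for simplicial toric Fano varieties gives, in the weighted-projective case,
\[
    (-K_{\mathbb{P}(q)}) \cdot C_{ij} \;=\; \frac{\sum_k q_k}{q_i q_j},
\]
and in the fake weighted-projective case an additional factor $1/|G| \le 1$ arises (the map $\pi \colon \mathbb{P}(q) \to X$ is étale in codimension one, so $\pi^*(-K_X) = -K_{\mathbb{P}(q)}$, and the projection formula produces the reduction). In all cases,
\[
    (-K_X) \cdot C_{ij} \;\le\; \frac{\sum_k q_k}{q_i q_j}.
\]

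Since the $q_k$ inherit the decreasing ordering from the $\beta_k$, the product $q_i q_j$ is minimized over unordered pairs at $(i,j) = (d,d+1)$, and the integrality of the weights gives $q_{d+1}\ge 1$. Combining these observations yields, for every torus-invariant integral curve $C$,
\[
    (-K_X) \cdot C \;\le\; \frac{\sum_k q_k}{q_d q_{d+1}} \;\le\; \frac{\sum_k q_k}{q_d} \;=\; \frac{1}{\beta_d} \;\le\; 2(s_d-1),
\]
where the last inequality is the case $i = d$ of Theorem~\ref{main_bc_bound}.

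For the equality case, all three inequalities displayed above must be tight. From $1/\beta_d = 2(s_d-1)$ together with Theorem~\ref{main_bc_bound}(b) applied with $i = d$, the barycentric coordinates are pinned down to $(1/s_1,\ldots,1/s_{d-1},1/(2(s_d-1)),1/(2(s_d-1)))$; tightness of the middle inequality forces $q_{d+1}=1$; and tightness of the first inequality forces $|G| = 1$, so $X$ is a genuine weighted projective space. These conditions together determine the weights as $q_k = 2(s_d-1)/s_k$ for $k \le d-1$ and $q_d=q_{d+1}=1$, yielding $X \cong \mathbb{P}\bigl(2(s_d-1)/s_1,\ldots,2(s_d-1)/s_{d-1},1,1\bigr)$. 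The technical crux is the careful justification of the intersection formula in the (possibly fake) weighted-projective setting; once it is in place, sharpness of the bound comes from exploiting the integrality $q_{d+1}\ge 1$ rather than the much weaker $\beta_{d+1}\ge 1/(s_{d+1}-1)$ supplied by Theorem~\ref{main_bc_bound}.
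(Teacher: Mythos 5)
Your proposal takes a genuinely different route from the paper. The paper's proof is a two-liner: it cites Laterveer's result that $\max_C (-K_X)\cdot C = \max_{F\in\cF_1(S^*)}\vol_\Z(F)$, then applies Theorem~\ref{thm_dual_face_volumes} with $l=1$. You instead derive an explicit intersection formula for (fake) weighted projective spaces and bound it directly with Theorem~\ref{main_bc_bound}$(i=d)$ plus integrality of the weights. Both routes reduce the \emph{inequality} to the same fact $1/\beta_d \le 2(s_d-1)$, and your chain $(-K_X)\cdot C_{ij} \le \sum q_k/(q_i q_j) \le \sum q_k/(q_dq_{d+1}) \le \sum q_k/q_d = 1/\beta_d$ is a nice way to see it without invoking Theorem~\ref{thm_faces_bc} on the dual simplex. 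So for the inequality itself, your argument is fine modulo a proper justification of the intersection formula.

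The equality characterization is where you have a genuine gap. Your correction factor for fake weighted projective spaces is not literally $1/|G|$: the relevant factor for the wall $\tau_{ij}$ is the index of the image of the sublattice $N' = \mathbb{Z}\langle v_1,\dots,v_{d+1}\rangle$ in the quotient lattice $N/(\lspan(\tau_{ij})\cap N)$. This is a divisor of $|G|$ and can be strictly smaller. Consequently, tightness of your first inequality at the optimizing wall $\tau_{d,d+1}$ only forces that single wall index to equal $1$, \emph{not} that $N'=N$ (i.e., not that $|G|=1$). The weights $q$ together with the $\beta$-tuple do not determine the integral simplex $S$ uniquely (this is exactly the ambiguity parametrized by $(a,h)$ in Lemma~\ref{lem_hyperplane_intersection}), so concluding ``$X$ is a genuine weighted projective space'' is not justified. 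This is precisely the nontrivial content of Theorem~\ref{thm_dual_face_volumes}(b) for $l=1$, whose proof in the paper goes through Lemma~\ref{lem_hyperplane_intersection} and then a radius/support-function argument to force $h=1$ and hence $S\cong (S^d_1-e)^*$. You would need to either supply an argument that the local index at $\tau_{d,d+1}$ being $1$ together with the pinned barycentric coordinates forces $h=1$, or simply invoke Theorem~\ref{thm_dual_face_volumes}(b) directly, which is what the paper does.
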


\subsection*{Applications of $\Pim$ and $\Plm$ to integer optimization}\label{sec:opt_appl}

One application of integral polytopes with a fixed number of interior integral points concerns the so-called reverse Chv\'atal-Gomory rank $\rcg(P)$ of 
integral polytopes $P$ recently introduced in \cite{arxiv:math/1211.0388}. 
The value $\rcg(P)$ is the least upper bound on the Chv\'atal-Gomory rank of rational polyhedra $Q$ satisfying $Q \cap \Z^d = P \cap \Z^d$
(for the definition of and background information on the Chv\'atal-Gomory rank see, 
for example, \rescite{MR874114}{Chapter 23}). 
In \cite{arxiv:math/1211.0388} it is shown that $\rcg(P) < \infty$ if and only if $\relintr{P} \cap \Z^d \neq \emptyset$ 
or $P \in \Plm$. In the case of $\relintr{P} \cap \Z^d \neq \emptyset$, it would be interesting to investigate the relation of $\rcg(P)$ to $|\relintr{P} \cap \Z^d|$
and $\dim(P)$. In this respect, for the simplest possible case $|\relintr{P} \cap \Z^d| = 1$, our theorems about $\Sd{1}$ and $\Pd{1}$ may be interesting.
Regarding $\rcg(P)$ for $P \in \Plm$, our Theorem~\ref{lpf_ld_bound} may be useful.
% 
% As indicated in \cite{arxiv:math/1211.0388} in the case
% $\relintr{P} \cap \Z^d \neq \emptyset$, the value $\rcg(P)$ cannot be bounded in terms of the dimension of $P$ only but depends on $d$ as well. Thus, for every pair $l,d \in \N$ with $l \le d$,
% one can ask about upper bounds on $\rcg(P)$ for all integral polytopes $P$ of dimension $l$ such that $\relintr{P} \cap \Z^d \neq \emptyset$.
% This provides a link to our Theorems~\ref{thm_ca} and \ref{general_ld_bound}. The second case for which $\rcg(P) < \infty$, i.e. $P \in \Plm$, yields
% the question about bounds on $\rcg(P)$
% for elements of $\Plm$ for every $d \in \N$, providing a link to our Theorem~\ref{lpf_ld_bound}.

Another topic in optimization related to our studies concerns the use of lattice-free sets for generation of cutting planes for integer and mixed-integer linear programs. The problem of mixed-integer programming consists,
roughly speaking, of optimizing a linear function $f$ over $Q \cap (\Z^d \times \R^n)$, where $Q$ is a rational polyhedron given by a system of linear inequalities, $d \in \N$ and $n \in \N \cup \{0\}$. 
Here, $d$ is the number of integral variables and $n$ the number of real variables. 
The theory of cutting planes arises from the observation that we can make the polyhedron $Q$ smaller
without changing the optimum of $f$ on $Q \cap (\Z^d \times \R^n)$ by replacing $Q$ with $Q \cap H$ as soon as $H$ is a closed halfspace satisfying 
\begin{align} 	
	\label{cut:preserves:miproblem}
	Q \cap H \cap (\Z^d \times \R^n) 
		= 
	Q \cap (\Z^d \times \R^n).
\end{align}
 The idea of cutting-plane methods is that by iteratively reducing $Q$ with appropriately chosen hyperplanes $H$, the optimum of the objective function $f$ on $Q$ will become close enough (or even equal) 
to the optimum of $f$ on $Q \cap (\Z^d \times \R^n)$. 

From the algorithmic perspective, two aspects play a crucial role: on the one hand one needs to be able to construct $H$ efficiently and, 
on the other hand, the effect of application of $H$ for reduction of $Q$ should be significant enough. It is known since the work of Balas 
\cite{Balas71} that halfspaces $H$ as above (the so-called cuts) can be generated using lattice-free sets. If $C$ is a $d$-dimensional convex 
lattice-free set in $\R^d$, then every $H$ satisfying 
\begin{align}
	\label{C-cut condition}
	Q \setminus (\intr{C} \times \R^n) \subseteq H	
\end{align}
 also satisfies \eqref{cut:preserves:miproblem}. The advantage of using \eqref{C-cut condition} instead of \eqref{cut:preserves:miproblem}
for determination of $H$ is that for many choices of $C$ the task of choosing a sufficiently good cut $H$ satisfying \eqref{C-cut condition} can be performed efficiently. 
Note that classical cutting-plane methods rely on very simple lattice-free sets, which are slabs bounded by two parallel hyperplanes (the so-called split sets). However,
recently, it has become clear that the cuts arising from the lattice-free sets in $\Pim$ play an important role in the mixed-integer linear programming. For example, all these cuts are needed for having a finite convergence of the underlying cutting-plane methods; for further details see \cite{MR2855866} and \cite{MR2968262}. This shows that our Theorem~\ref{lpf_ld_bound} might have applications in 
the cutting-plane theory of mixed-integer programs.

\section{Background results}\label{sec:background}

We will make use of the following well-known theorems.

\begin{theorem}\label{thm_blichfeldt}\thmtitle{Blichfeldt's theorem; \cite{MR1500976}, \rescite{MR1434478}{p. 69}}
Let $C \subseteq \R^d$ be a $d$-dimensional convex set. Then
\[
 |C \cap \Z^d| \le d + d!\vol(C).
\]
\end{theorem}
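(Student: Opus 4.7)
The plan is to reduce Blichfeldt's bound to the classical statement that every $d$-dimensional lattice polytope $P$ with $|P \cap \Z^d| = n$ satisfies $\vol(P) \ge (n-d)/d!$. To obtain the convex-set version from this, I set $P := \conv(C \cap \Z^d) \subseteq C$: by convexity $P \cap \Z^d = C \cap \Z^d$, and hence $\vol(P) \le \vol(C)$ together with $|P \cap \Z^d| = |C \cap \Z^d|$. Rearranging the lattice-polytope inequality then yields $|C \cap \Z^d| \le d + d!\,\vol(C)$. The case $|C \cap \Z^d| \le d$ is trivial, so the interesting regime is $|C \cap \Z^d| \ge d + 1$ with $P$ full-dimensional.

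The lattice-polytope inequality I would prove by induction on $n$. For the base case $n = d + 1$, the polytope is a simplex $\conv(v_0, \ldots, v_d)$ with lattice vertices, whose volume equals $\tfrac{1}{d!}|\det(v_1 - v_0, \ldots, v_d - v_0)| \ge 1/d!$ since the determinant is a nonzero integer. For the inductive step $n \ge d + 2$, I would pick a vertex $v$ of $P$ and set $P' := \conv\bigl((P \cap \Z^d) \setminus \{v\}\bigr)$. In the main case $\dim P' = d$, the inductive hypothesis yields $\vol(P') \ge (n - 1 - d)/d!$; next, a facet $F$ of $P'$ visible from $v$ together with $d$ affinely independent lattice points $u_1, \ldots, u_d \in F$ furnishes a lattice simplex $\conv(v, u_1, \ldots, u_d) \subseteq P$ of volume at least $1/d!$ whose interior is disjoint from $P'$. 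Adding the two contributions gives $\vol(P) \ge (n - d)/d!$. When $\dim P' < d$, the polytope $P$ is a pyramid with apex $v$ over the $(d-1)$-dimensional $P'$, and the inequality follows by applying the inductive hypothesis in dimension $d - 1$ to $P'$ and using that the lattice distance from $v$ to $\aff(P')$ is at least $1$.

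The step I expect to be the main obstacle is certifying cleanly that the interior of $\conv(v, u_1, \ldots, u_d)$ is disjoint from $P'$, so that its volume genuinely adds on top of $\vol(P')$. This relies on choosing a facet of $P'$ visible from $v$ and invoking the corresponding separating hyperplane, together with the fact that $v$ has lattice distance at least $1$ from $\aff(F)$ so that the enclosed simplex attains the normalized volume lower bound.
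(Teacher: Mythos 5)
The paper does not prove this theorem; it is quoted as a background result with references to Blichfeldt's 1914 paper and Cassels's book, so there is no proof in the paper to compare against. Your argument is the standard one: reduce to the inequality $\vol(P) \ge (n-d)/d!$ for a $d$-dimensional lattice polytope $P$ with $n$ lattice points, and establish that by induction, peeling off one lattice vertex $v$ at a time. The base case $n=d+1$, the visibility/separating-hyperplane argument for splitting off a lattice simplex of volume at least $1/d!$ whose interior is disjoint from $P' = \conv\bigl((P\cap\Z^d)\setminus\{v\}\bigr)$, and the pyramid case with the lattice-distance observation are all correct; the disjointness step you flag as the potential obstacle is indeed handled by the supporting hyperplane of the visible facet, exactly as you describe, since the interior of the split-off simplex lies in the open halfspace containing $v$ while $P'$ lies in the opposite closed halfspace.

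The one place you should be more careful is the reduction from $C$ to $P := \conv(C\cap\Z^d)$: you slide from ``$|C\cap\Z^d|\ge d+1$'' to ``$P$ full-dimensional,'' but the second is not a consequence of the first. A long thin slab such as $[0,N]\times[-\varepsilon,\varepsilon]^{d-1}$ is a $d$-dimensional convex set containing $N+1$ collinear lattice points while its volume can be made arbitrarily small, and it is in fact a counterexample to the inequality as literally stated. The correct hypothesis in Blichfeldt's theorem is that $C\cap\Z^d$ affinely spans $\R^d$ (equivalently, $\dim\conv(C\cap\Z^d)=d$); the paper's formulation omits this, which does not affect Corollary~\ref{applying_blichfeldt}, where the result is applied only to integral polytopes whose lattice points obviously span. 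So your proof proves the correctly stated theorem, and your degenerate-case handling is fine once you get into the lattice-polytope induction; just make the spanning hypothesis explicit in the reduction step rather than presenting it as an automatic consequence of $|C\cap\Z^d|\ge d+1$.
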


\begin{theorem}\label{thm_minkowski}\thmtitle{Minkowski's first theorem; \rescite{MR1940576}{VII. 3.2}}
 Let $C \subseteq \R^d$ be a $d$-dimensional compact convex set, symmetric up to reflection in $o$ and satisfying $\intr{C} \cap \Z^d = \{o\}$. Then $\vol(C) \le 2^d$.
\end{theorem}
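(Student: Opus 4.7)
The plan is to derive Minkowski's first theorem via a Blichfeldt-style pigeonhole applied to the scaled body $\tfrac{1}{2}C$. I would argue by contradiction, supposing that $\vol(C) > 2^d$, so that $\vol(\tfrac{1}{2}C) > 1$, and then producing a nonzero lattice point inside $C$.

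The first main step is to exhibit two distinct points $x,y \in \tfrac{1}{2}C$ whose difference lies in $\Z^d$. To this end I would tile $\R^d$ by the half-open unit cubes $z + [0,1)^d$ for $z \in \Z^d$, intersect $\tfrac{1}{2}C$ with each cube, and translate the resulting piece back into $[0,1)^d$ by subtracting $z$. The translated pieces all lie in the unit cube $[0,1)^d$ (whose volume is $1$), while countable additivity of Lebesgue measure ensures that the sum of their $d$-dimensional volumes equals $\vol(\tfrac{1}{2}C) > 1$. Therefore two of these translates must overlap in a set of positive measure, and in particular there exist distinct integer vectors $z_1, z_2$ and a common point in $(\tfrac{1}{2}C - z_1) \cap (\tfrac{1}{2}C - z_2)$; equivalently, there exist $x, y \in \tfrac{1}{2}C$ such that $x - y = z_1 - z_2 \in \Z^d \setminus \{o\}$.

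The second step is to turn this lattice difference into an actual lattice point of $C$ via symmetry and convexity. Since $x, y \in \tfrac{1}{2}C$, one has $2x \in C$ and $2y \in C$; central symmetry of $C$ in $o$ yields $-2y \in C$, and convexity then implies
\[
 x - y \;=\; \tfrac{1}{2}(2x) + \tfrac{1}{2}(-2y) \;\in\; C.
\]
Thus $x - y$ is a nonzero element of $C \cap \Z^d$, contradicting the assumption $C \cap \Z^d = \{o\}$, and the inequality $\vol(C) \le 2^d$ follows.

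The main obstacle is the pigeonhole step itself: the inference from $\vol(\tfrac{1}{2}C) > \vol([0,1)^d)$ to overlap of two translated pieces requires some care with measurability and with sets of measure zero. However, compactness of $C$ guarantees that $\tfrac{1}{2}C$ is Lebesgue measurable and bounded, so only finitely many cubes contribute, and the argument reduces to finite additivity together with the fact that a finite family of measurable subsets of $[0,1)^d$ whose total measure exceeds $1$ cannot be pairwise disjoint. The rest of the proof is a direct exploitation of the hypotheses on $C$.
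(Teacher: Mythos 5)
The paper does not prove Minkowski's first theorem; it cites it as a background result from Barvinok's \emph{A Course in Convexity} (the \rescite{MR1940576}{VII. 3.2} attribution in the theorem header). Your argument is correct and is precisely the standard Blichfeldt-style pigeonhole proof used in that reference: the only point worth remarking is that you prove the strict-inequality form (\(\vol(C) > 2^d\) forces a nonzero lattice point), which suffices for the contrapositive stated here, and you correctly sidestep the more delicate compactness-plus-\(\vol(C) = 2^d\) boundary case that would require a limiting argument.
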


We will also use the following generalization to non-symmetric convex sets, which is due to Mahler.

\begin{theorem}\label{thm_mahler}\thmtitle{Mahler's theorem; \cite{MR0001242}, \rescite{MR893813}{\S 7, Theorem 2}}
Let $C \subseteq \R^d$ be a $d$-dimensional compact convex set and let $\intr{C} \cap \Z^d = \{o\}$. Then $\vol(C) \le (1 + \ca(C,o))^d$.
\end{theorem}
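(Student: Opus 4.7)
The plan is to reduce Mahler's inequality to Minkowski's first theorem (Theorem~\ref{thm_minkowski}) by constructing an origin-symmetric sub-body of $C$ whose volume is comparable to that of $C$, with the Brunn--Minkowski inequality supplying the comparison.

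Write $\mu := \ca(C,o)$. The first step is to translate the coefficient of asymmetry into the inclusion $-C \subseteq \mu C$. Given a nonzero $x \in C$, the line $\R x$ meets $C$ in a segment $[-\beta x,\alpha x]$ with $\alpha \ge 1$, $\beta > 0$, and both endpoints on $\bd{C}$; the definition of $\ca(C,o)$ forces $\alpha/\beta \le \mu$, hence $\beta \ge 1/\mu$, and therefore $-x/\mu \in C$.

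Next, consider the centrally symmetric body
\[
K \;:=\; \frac{1}{1+\mu}\,(C - C),
\]
which is full-dimensional with $o \in \intr{K}$. I claim $K \subseteq C$: for any $x,y \in C$ the point $z := -y/\mu$ lies in $C$ by the previous paragraph, so
\[
\frac{x-y}{1+\mu} \;=\; \frac{1}{1+\mu}\,x \;+\; \frac{\mu}{1+\mu}\,z
\]
is a convex combination of points of $C$ and hence belongs to $C$. Therefore $\intr{K} \cap \Z^d \subseteq \intr{C} \cap \Z^d = \{o\}$, and Minkowski's first theorem yields $\vol(K) \le 2^d$.

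The final ingredient is the Brunn--Minkowski inequality applied to $C$ and $-C$, which gives
\[
\vol(C-C)^{1/d} \;\ge\; \vol(C)^{1/d} + \vol(-C)^{1/d} \;=\; 2\,\vol(C)^{1/d}.
\]
Consequently $\vol(K) = (1+\mu)^{-d}\vol(C-C) \ge 2^d(1+\mu)^{-d}\vol(C)$, and combining with $\vol(K) \le 2^d$ delivers $\vol(C) \le (1+\mu)^d$.

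The step I expect to require the most care is the translation of the coefficient of asymmetry into the inclusion $-C \subseteq \mu C$ from the one-dimensional definition of $\ca(C,o)$; once this is in place, the rest of the argument is a routine assembly of classical tools from convex geometry and the geometry of numbers.
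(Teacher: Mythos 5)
The paper states this theorem as a background result with citations to Mahler's original paper and to Gruber--Lekkerkerker and supplies no proof of its own, so there is no paper argument to compare against; your proof is correct and is essentially the classical one via the symmetric difference body. Two minor remarks. First, the paper's version of Minkowski's first theorem (Theorem~\ref{thm_minkowski}) assumes $K \cap \Z^d = \{o\}$, whereas your $K$ only satisfies the weaker condition $\intr{K} \cap \Z^d = \{o\}$, because $\bd{K}$ might meet $\bd{C}$ at a lattice point. This is harmless: for every $\lambda \in (0,1)$ one has $\lambda K \subseteq \intr{K}$ (since $o \in \intr{K}$), so $\lambda K \cap \Z^d = \{o\}$, and Theorem~\ref{thm_minkowski} gives $\lambda^d \vol(K) \le 2^d$; letting $\lambda \to 1$ yields $\vol(K) \le 2^d$. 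Alternatively one can simply invoke the more common formulation of Minkowski's theorem, which applies under the interior hypothesis. Second, a slightly quicker route to $K \subseteq C$: your first step shows $-\frac{1}{\mu}C \subseteq C$, hence $-C \subseteq \mu C$, so $C - C = C + (-C) \subseteq C + \mu C = (1+\mu)C$ by convexity of $C$, and dividing by $1+\mu$ gives the inclusion without the explicit convex-combination computation.
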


We will frequently use the Sylvester sequence, as defined in the introduction. 
For this sequence the following properties are easy to see.
 
\begin{proposition}\label{lem_syl_seq} \thmtitle{\cite[Lemma 2.1]{MR1138580}, \cite[p. 44]{MR651251}}
For the Sylvester sequence $(s_i)_{i \in \N}$ and every $i \in \N$, the following statements hold:
\begin{compactitem}
\item [(a)] \[s_{i+1} = s_{i}^2 - s_{i} + 1.\] 
\item [(b)] \begin{equation*}\frac{1}{s_{1}} + \ldots + \frac{1}{s_{i}} + \frac{1}{s_{i+1}-1} = 1. \end{equation*}
\item [(c)]
\[
 2^{2^{i-2}} \le s_i \le 2^{2^{i-1}}
\]
and, furthermore, for $i \ge 2$,
\[
 2^{2^{i-2}} + 1\le s_i.
\]
\end{compactitem}
\end{proposition}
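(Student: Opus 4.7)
The proposition consists of three purely number-theoretic statements about the Sylvester sequence, and all of them will follow from the recursion $s_{i+1} = \prod_{j=1}^{i} s_j + 1$. The plan is to derive (a) directly and then use it as the main tool for both (b) and (c), each of which proceeds by an induction on $i$.

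For part (a), the idea is to rewrite $s_{i+1}$ by splitting off the last factor of the product: since $s_{i+1} = (s_1 \cdots s_{i-1}) \cdot s_i + 1$ and the definition of $s_i$ gives $s_1 \cdots s_{i-1} = s_i - 1$, one obtains $s_{i+1} = (s_i - 1) s_i + 1 = s_i^2 - s_i + 1$. For part (b), I will proceed by induction on $i$. The base case $i=1$ is $\frac{1}{2} + \frac{1}{2} = 1$. For the inductive step, the assertion for $i$ differs from the assertion for $i-1$ only in the last two terms, so it suffices to establish the identity $\frac{1}{s_i} + \frac{1}{s_{i+1}-1} = \frac{1}{s_i - 1}$. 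Using (a), we have $s_{i+1} - 1 = s_i(s_i-1)$, and the partial-fraction decomposition $\frac{1}{s_i(s_i-1)} = \frac{1}{s_i - 1} - \frac{1}{s_i}$ finishes the step.

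For part (c), I will again use induction, based on the relation $s_{i+1} = s_i^2 - s_i + 1$ from (a). The upper bound $s_i \le 2^{2^{i-1}}$ is easy: if $s_i \le 2^{2^{i-1}}$, then $s_{i+1} \le s_i^2 \le 2^{2^{i}}$, and the base case $s_1 = 2 = 2^{2^0}$ is immediate. For the lower bound I will prove the strengthened inequality $s_i \ge 2^{2^{i-2}} + 1$ for $i \ge 2$ directly by induction, as this implication is what makes the recursion propagate cleanly. Assuming $s_i \ge 2^{2^{i-2}} + 1$, expanding gives
\[
s_{i+1} = s_i^2 - s_i + 1 \ge \bigl(2^{2^{i-2}} + 1\bigr)^2 - \bigl(2^{2^{i-2}} + 1\bigr) + 1 = 2^{2^{i-1}} + 2^{2^{i-2}} + 1 \ge 2^{2^{i-1}} + 1,
\]
which is the desired bound for index $i+1$. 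The base case $i=2$ is $s_2 = 3 = 2^{2^0} + 1$, and the remaining case $i=1$ of the weaker bound reduces to $s_1 = 2 \ge \sqrt{2} = 2^{2^{-1}}$.

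There is no genuine obstacle here; the only point that requires a small amount of care is choosing the right inductive hypothesis in (c). If one attempts to propagate only the weaker bound $s_i \ge 2^{2^{i-2}}$, the subtracted term $-s_i$ in the recursion spoils the estimate; propagating the $+1$ in the hypothesis from the start fixes this and also yields the strengthened statement for $i \ge 2$ in the same induction.
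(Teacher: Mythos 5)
The paper does not actually prove this proposition; it cites \cite{MR1138580} and \cite{MR651251} and remarks that the properties are easy to see, so there is no internal proof to compare against. Your argument is correct: part (a) follows from splitting the defining product and using $s_1 \cdots s_{i-1} = s_i - 1$ (which also holds for $i=1$ with the empty product, since $s_1 - 1 = 1$); part (b) reduces by induction to the partial-fraction identity $\frac{1}{s_i(s_i-1)} = \frac{1}{s_i - 1} - \frac{1}{s_i}$, available because (a) gives $s_{i+1}-1 = s_i(s_i-1)$; and in part (c) you correctly identify that the weak lower bound $s_i \ge 2^{2^{i-2}}$ does not propagate through $s_{i+1} = s_i^2 - s_i + 1$ on its own, so one must carry the strengthened hypothesis $s_i \ge 2^{2^{i-2}}+1$ from the base case $s_2 = 3$; the computation $(a+1)^2 - (a+1) + 1 = a^2 + a + 1$ with $a = 2^{2^{i-2}}$ then closes the induction. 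This is the standard proof, and it is complete.
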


\begin{remark}\label{rem_sylv_sim}
In view of Proposition~\ref{lem_syl_seq}, one can now verify in a straightforward way that the simplices $T^d_{1,j}$ are indeed in $\Sd{1}$ for $j \in \{1, \ldots, d+1\}$.
The set $\intr{T^d_{1,j}} \cap \Z^d$ is the set of points $(n_1, \ldots, n_d) \in \N^d$ fulfilling
\begin{align}
\frac{n_1}{s_1} + \ldots + \frac{n_{j-1}}{s_{j-1}} + \frac{n_j + \ldots + n_d}{(d-j+2)(s_j - 1)} < 1. \label{eq:ver_of_t-simplices}
\end{align}
If for $(n_1, \ldots, n_d) \in \N^d$ one has $n_1 = \ldots = n_d = 1$, then \eqref{eq:ver_of_t-simplices}
holds in view of Proposition~\ref{lem_syl_seq}(b). Otherwise, one of the components $n_1, \ldots, n_d$ is at least $2$. Taking into account that
$s_1, \ldots, s_{j-1},(d-j+2)(s_j - 1)$ is an increasing sequence, one sees that for $(n_1, \ldots, n_d) \in \N^d \setminus \{\allone\}$
the smallest value of the left hand side of \eqref{eq:ver_of_t-simplices} is attained for $n_1 = \ldots = n_{d-1} = 1$ and $n_d = 2$. With this choice, the left hand side
of \eqref{eq:ver_of_t-simplices} is 
\[
 \frac{1}{s_1} + \ldots + \frac{1}{s_{j-1}} + \frac{d - j + 2}{(d - j + 2)(s_j - 1)} = 1.
\]
Hence, \eqref{eq:ver_of_t-simplices} is not fulfilled for $(n_1, \ldots, n_d) \in \N^d \setminus \{\allone\}$.
\end{remark}
% 
% 
% 
% 
% The fact that $(1, \ldots, 1)$ fulfils this inequality can be verified directly using Proposition~\ref{lem_syl_seq}.
% If for some $k \in \{1, \ldots, j-1\}$ we have $n_k \ge 2$, then
% \[
%  \frac{n_1}{s_1} + \ldots + \frac{n_{k-1}}{s_{k-1}} + \frac{n_k}{s_k} \ge 1 - \frac{1}{s_k - 1} + \frac{2}{s_k} \ge 1 + \frac{1}{s_k} > 1.
% \]
% In particular, this already shows $T^d_{1,d+1} \in \Sd{1}$.
% If for some $k \in \{j, \ldots, d\}$ we have $n_k \ge 2$, then
% \[
%  \frac{n_j + \ldots + n_d}{(d-j+2)(s_j - 1)} \ge \frac{d-j+2}{(d-j+2)(s_j - 1)} = \frac{1}{s_j - 1}.
% \]
% This shows $T^d_{1,1} \in \Sd{1}$ and together with Proposition~\ref{lem_syl_seq}, we also get $T^d_{1,j} \in \Sd{1}$ for $j \in \{2, \ldots, d\}$.

The following two theorems from \cite{MR2967480} deal with simplices $S \in \Sd{1}$. Let $p$ be the unique interior integral point of $S$ and let 
$\beta_1, \ldots, \beta_{d+1}$ be the barycentric coordinates of $p$ with respect to $S$.
It is clear that $\beta_i > 0$ for every $i \in \{1, \ldots, d+1\}$ because $p$ is in the interior of $S$.
Our analysis of $S$ relies mainly on two different results from \cite{MR2967480}.
The first one is a general statement
establishing a system of inequalities for $\beta_1, \ldots, \beta_{d+1}$.

\begin{theorem}\label{thm_sum_prod_bc} \thmtitle{\cite[Theorem 1.1]{MR2967480}}
Let $S \in \Sd{1}$. Let $\beta_1 \ge \ldots \ge \beta_{d+1} > 0$ be the 
barycentric coordinates of the unique interior integral point of $S$. 
Then, for every $j \in \{1, \ldots, d\}$,
\begin{align}
 \beta_1 \cdots \beta_j \le \beta_{j+1} + \ldots + \beta_{d+1}. \label{eq_bc_ineq}
\end{align}
\end{theorem}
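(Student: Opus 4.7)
My plan is to argue by contradiction, leveraging that the unique interior lattice point of $S$ is unique. By translating I may assume this point is the origin $o$, so that $\sum_{i=1}^{d+1} \beta_i v_i = o$ and every $v_i \in \Z^d$. The key computational tool is the barycentric-coordinate formula for arbitrary integer combinations of the vertices: for any $(m_1, \ldots, m_j) \in \Z^j$, the integer point $x := \sum_{i=1}^j m_i v_i$ has barycentric coordinates $m_i + \mu \beta_i$ for $i \le j$ and $\mu \beta_i$ for $i > j$, where $\mu := 1 - \sum_{i=1}^j m_i$; this follows by using $\mu o = \mu \sum_k \beta_k v_k$ to convert $x$ into a true affine combination. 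Hence $x \in \intr{S}$ iff $\mu > 0$ and $m_i + \mu\beta_i > 0$ for all $i \le j$, and $x \in \intr{S} \cap \Z^d$ forces $(m_1, \ldots, m_j) = 0$ by the affine independence of $v_1, \ldots, v_{d+1}$.

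Supposing toward contradiction that $\beta_1 \cdots \beta_j > 1 - (\beta_1 + \cdots + \beta_j)$, the task becomes to exhibit $(m_1, \ldots, m_j) \in \Z^j \setminus \{0\}$ with $\mu > 0$ and $m_i > -\mu\beta_i$ for every $i \le j$. In the base case $j = 1$, the choice $m_1 = -1$ does the job: if $\beta_1 > 1/2$, then $\mu = 2$ and $2\beta_1 - 1 > 0$, placing $-v_1 \in \intr{S} \cap \Z^d \setminus \{o\}$, a contradiction, so $\beta_1 \le 1/2$. For general $j$, I would attempt a greedy construction with all $m_i \le 0$: setting $a_i := -m_i \ge 0$, the constraints become $a_i < (1 + \sum_{k \ne i} a_k)\beta_i/(1-\beta_i)$, and I would select the $a_i$ iteratively in the order dictated by the monotonicity $\beta_1 \ge \cdots \ge \beta_j$. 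The strict inequality $\beta_1 \cdots \beta_j > 1 - (\beta_1 + \cdots + \beta_j)$ should translate, via direct manipulation, into the statement that the cumulative "slack" of these constraints is positive, so a nonzero integer tuple must exist.

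The main obstacle is the non-local coupling of the constraints --- each $a_i$ depends on all the other $a_k$'s through $\mu$ --- which complicates any naive greedy construction. I expect this to require strengthening the inductive hypothesis to carry along the cumulative products $\beta_1 \cdots \beta_k$ as auxiliary bookkeeping quantities, exploiting the monotonicity of the $\beta_i$. A cleaner alternative I would try in parallel is a volumetric argument: restrict to the $j$-dimensional subspace $L_j := \lspan(v_1 - o, \ldots, v_j - o)$, symmetrize the slice $S \cap L_j$ around $o$ to obtain a centrally symmetric convex body $C$, and apply Minkowski's first theorem (Theorem~\ref{thm_minkowski}) to $C$ with respect to the sublattice $\Z^d \cap L_j$. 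An explicit determinant calculation, analogous to the one showing $\vol(T) = \beta_{d+1}\vol(S)$ for $T = \conv(o, v_1, \ldots, v_d)$, should exhibit $\vol(C)$ as proportional to $\beta_1 \cdots \beta_j$ divided by $\sum_{i>j}\beta_i$ times the covolume of the sublattice; then absence of nontrivial lattice points in $C$ yields $\beta_1 \cdots \beta_j \le \sum_{i > j}\beta_i$ directly. I suspect this second route is cleaner, while the first has the virtue of producing an explicit violating lattice point.
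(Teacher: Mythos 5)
The paper does not actually prove Theorem~\ref{thm_sum_prod_bc}; it is quoted from \cite{MR2967480}, where it appears in a differently phrased but equivalent form, so there is no in-text argument to compare against. Taken on its own, your first route is set up correctly --- the barycentric-coordinate formula for $\sum_{i\le j}m_iv_i$ and the case $j=1$ are both right --- but the general step is not carried out, and the greedy with all $m_i\le 0$ is not obviously enough: writing $a_i:=-m_i$ and $\mu:=1+\sum a_i$, a nonzero admissible tuple exists if and only if some integer $\mu\ge 2$ satisfies $\sum_{i\le j}\lceil\mu\beta_i\rceil\ge\mu+j-1$, and since $\lceil\mu\beta_i\rceil$ is about $\mu\beta_i+\tfrac12$ on average this fails generically once $j\ge 2$ unless the fractional parts of the $\mu\beta_i$ happen to be small. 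Controlling those fractional parts under the hypothesis $\beta_1\cdots\beta_j>\sum_{i>j}\beta_i$ is precisely the arithmetic core of the theorem, so this is a missing idea, not an omitted routine check.

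Your second route contains a concrete error in the claimed volume. Set $\sigma:=\sum_{i>j}\beta_i$ and $T:=S\cap L_j$. Parametrizing $L_j$ by $c\mapsto\sum_{i\le j}c_iv_i$, one finds $T=\conv(\{v_1,\ldots,v_j,w\})$ with $w:=-\tfrac1\sigma\sum_{i\le j}\beta_iv_i$, that the barycentric coordinates of $o$ in $T$ are $(\beta_1,\ldots,\beta_j,\sigma)$, and that $\vol(T)=\tfrac{1}{j!\,\sigma}\,|\det(v_1,\ldots,v_j)|$. Thus the slice volume carries the factor $\sigma$, not $\beta_1\cdots\beta_j$, and you never specify a symmetrization that would introduce the product, so the Minkowski step cannot be run as sketched. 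Nevertheless, slicing is exactly the right move, and it closes cleanly with a tool the paper already records: one has $\relintr{T}\cap\Z^d=\{o\}$ (since $\relintr{T}=\intr{S}\cap L_j$), so Theorem~\ref{thm_faces_bc}, applied to the rational $j$-simplex $T$ inside $L_j$ with $l=j$ and $F=T$, gives $\vol_{\Z}(T)\le\tfrac{1}{j!\,\beta_1\cdots\beta_j}$ whenever $\sigma$ is the smallest among $\beta_1,\ldots,\beta_j,\sigma$; on the other hand $\vol_{\Z}(T)=\tfrac{1}{j!\,\sigma}\,[\,\Z^d\cap L_j:\Z v_1+\cdots+\Z v_j\,]\ge\tfrac{1}{j!\,\sigma}$ because the index is a positive integer, and combining yields $\beta_1\cdots\beta_j\le\sigma$. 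The remaining case $\sigma\ge\beta_j$ is immediate since $\beta_1\cdots\beta_j\le\beta_j$. Making this link from the slice to the face-volume bound (and to the integrality of the sublattice index) is what your sketch needs to become a proof.
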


Theorem 1.1 in \textup{\cite{MR2967480}} has a somewhat different but equivalent formulation; see the discussion
in \rescite{MR2967480}{page 7}.
The second result from \cite{MR2967480} which we use here links the barycentric coordinates to the face volumes of the simplex. 

\begin{theorem}\label{thm_faces_bc}\thmtitle{\cite[Theorem 3.7]{MR2967480}}
 Let $S \subseteq \R^d$ be a $d$-dimensional simplex with $\vertset{S} \subseteq \Q^d$ containing precisely one interior integral point, which we denote by $p$. 
Let $\beta_1 \ge \ldots \ge \beta_{d+1} > 0$ denote the 
barycentric coordinates of $p$ with respect to $S$. 
Let $F$ be an $l$-dimensional face of $S$, where $l \in \{1, \ldots, d\}$. Let
$\beta_{i_1}, \ldots, \beta_{i_{l+1}}$ be the barycentric coordinates of $p$ associated with the vertices of $F$, where $1 \le i_1 < \ldots < i_{l+1} \le d+1$. Then
\begin{align*}
 \vol_{\Z}(F) \le \frac{1}{l! \beta_{i_1} \cdots \beta_{i_{l}}}.
\end{align*}
\end{theorem}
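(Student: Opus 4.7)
The plan is to reduce the general $l$-face case to the full-dimensional case $l=d$, and then to prove the latter by a parallelepiped-plus-Minkowski argument.

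\textbf{Step 1 --- Reduction to $l=d$.} Given the $l$-face $F$ with vertices $v_{i_1},\ldots,v_{i_{l+1}}$, set $\beta_I:=\sum_{k=1}^{l+1}\beta_{i_k}$ and
\[
q\;:=\;\tfrac{1}{\beta_I}\sum_{k=1}^{l+1}\beta_{i_k}v_{i_k}\;\in\;\relintr{F}.
\]
I would consider the shrunk-and-translated $l$-simplex $\tilde F:=p+\beta_I\,(F-q)$. Expanding a generic point of $\tilde F$ in the barycentric coordinates of $S$ shows that every vertex of $S$ appears with the coefficient $\beta_j>0$ (for $j\notin I$) or $\beta_I\alpha_k\ge0$ (for $k\in I$, where $\alpha_k$ are the barycentric coordinates in $F$); hence $\tilde F\subseteq S$ and $\relintr{\tilde F}\subseteq\intr{S}$. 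Since $p\in\relintr{\tilde F}$ (corresponding to $x=q$) and $\intr{S}\cap\Z^d=\{p\}$, the simplex $\tilde F$ is a rational-vertex $l$-simplex with $p$ as its unique interior integral point and with barycentric coordinates $(\beta_{i_k}/\beta_I)_{k=1,\ldots,l+1}$ retaining the same descending ordering. Applying the theorem in its full-dimensional form at dimension $l$ to $\tilde F$ yields
\[
\vol_\Z(\tilde F)\;\le\;\frac{\beta_I^l}{l!\,\beta_{i_1}\cdots\beta_{i_l}}.
\]
Because $\tilde F$ is the affine image of $F$ under a homothety of ratio $\beta_I$ with the same underlying lattice $\Lambda_F$, the identity $\vol_\Z(\tilde F)=\beta_I^l\,\vol_\Z(F)$ gives the claim after cancelling $\beta_I^l$. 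It therefore suffices to prove the case $l=d$.

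\textbf{Step 2 --- The full-dimensional case.} For a rational-vertex $d$-simplex $T$ with unique interior integral point $p$ and ordered barycentric coordinates $\gamma_1\ge\cdots\ge\gamma_{d+1}$, I would introduce
\[
\Pi\;:=\;v_{d+1}+\sum_{j=1}^{d}[0,\gamma_j]\,(v_j-v_{d+1}),
\]
a parallelepiped whose two main-diagonal vertices are $v_{d+1}$ and $p$, whose volume equals $\gamma_1\cdots\gamma_d\cdot d!\,\vol(T)$, and whose interior is contained in $\intr{T}$ (again by a barycentric check). The target inequality rearranges to $\vol(\Pi)\le1$. I would derive this from Minkowski's first theorem applied to the centrally symmetric body $\Pi-\Pi$, whose volume is $2^d\vol(\Pi)$: the whole argument then reduces to establishing $\intr{(\Pi-\Pi)}\cap\Z^d=\{0\}$.

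\textbf{Main obstacle.} The decisive and most delicate step is verifying this lattice-freeness of $\intr{(\Pi-\Pi)}$. Writing $\mu\in\intr{(\Pi-\Pi)}\cap\Z^d$ as $x-y$ with $x\in\intr{\Pi}$ and $y\in\Pi$ does not immediately give a second interior integral point of $T$, since neither $x$ nor $y$ need be integral. The fix is to exploit that $p$ itself is a vertex of $\Pi$: the shape $\Pi$ viewed from $p$ is the ``negative-orthant" parallelepiped $p-\sum_j[0,\gamma_j](v_j-v_{d+1})$, so whenever $\mu$ lies in the corresponding ``positive-orthant'' cone $\sum_j[0,\gamma_j](v_j-v_{d+1})$ the integral point $p-\mu$ belongs to $\Pi\subseteq T$; a divisibility reduction on $\mu$ (replacing it by a primitive divisor $\mu'$) then places $p-\mu'$ in $\relintr{\Pi}\subseteq\intr{T}$, contradicting uniqueness of $p$. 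Extending this argument to arbitrary directions $\mu\in\intr{(\Pi-\Pi)}$ --- in particular to those for which $p\pm\mu$ leaves $\Pi$ --- is the truly subtle part and would require carefully tracking which reflections of $\Pi$ through $p$ remain inside $T$, using that all $d+1$ barycentric coordinates of any resulting candidate translate must stay non-negative.
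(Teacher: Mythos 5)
The paper does not actually give a proof of this statement; it is imported from \cite{MR2967480}, with a remark that the proof there carries over from integral to rational vertices and that the $l=d$ case goes back to Pikhurko, Hensley, and Lagarias--Ziegler. So I can only assess your argument on its own merits.

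Your Step~1 (shrinking $F$ toward $p$ by the factor $\beta_I$ and reducing to the full-dimensional case) is correct: the barycentric expansion shows $\tilde F \subseteq S$ and $\relintr{\tilde F}\subseteq\intr{S}$, the lattice of $\aff(\tilde F)$ agrees with that of $\aff(F)$, the barycentric coordinates of $p$ in $\tilde F$ are $\beta_{i_k}/\beta_I$ in the same order, and $\vol_\Z(\tilde F)=\beta_I^l\vol_\Z(F)$, so the $l=d$ case applied to $\tilde F$ gives exactly the stated bound. Step~2 also sets up the right objects: the parallelepiped $\Pi$ based at $v_{d+1}$ (the vertex carrying the smallest coordinate $\gamma_{d+1}$) with $p$ as the opposite vertex, $\vol(\Pi)=d!\,\gamma_1\cdots\gamma_d\,\vol(T)$, and Minkowski's first theorem applied to $\Pi-\Pi$.

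However, you stop short at the decisive lemma $\intr(\Pi-\Pi)\cap\Z^d=\{0\}$, calling it ``the truly subtle part'' and proposing a cone-plus-divisibility argument that you admit you cannot close. This is a genuine gap in the submission as written --- but it is a gap you could have closed with a short argument, and your stated worry (that $p\pm\mu$ may leave $\Pi$) is a red herring: you do not need $p\pm\mu$ to stay in $\Pi$, only in $\intr{T}$. Concretely, write $\mu=\sum_{j=1}^d s_j(v_j-v_{d+1})$ with $|s_j|<\gamma_j$. The barycentric coordinates of $p\pm\mu$ with respect to $T$ are $\gamma_j\pm s_j$ for $j\le d$ (all strictly positive since $|s_j|<\gamma_j$) together with $\gamma_{d+1}\mp\sum_j s_j$. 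The two quantities $\gamma_{d+1}-\sum_j s_j$ and $\gamma_{d+1}+\sum_j s_j$ sum to $2\gamma_{d+1}>0$, so at least one of them is positive; for that choice of sign, $p\pm\mu$ has all barycentric coordinates positive, hence lies in $\intr{T}\cap\Z^d=\{p\}$, forcing $\mu=0$. This completes Step~2 and, with it, your proof. You should replace the speculative discussion in your ``Main obstacle'' paragraph with this direct check; in particular, neither the divisibility reduction nor the case analysis over cones is needed.
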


Direct application of Theorem~\ref{thm_faces_bc} yields
\begin{align}
 \max_{F \in \cF_l(S)}\vol_{\Z}(F) \le \frac{1}{l! \beta_{d-l+1} \cdots \beta_d} \label{eq:face_vs_bc_max}
\end{align}
and
\begin{align}
\min_{F \in \cF_l(S)}\vol_{\Z}(F) \le \frac{1}{l! \beta_1 \cdots \beta_l}, \label{eq:face_vs_bc_min}
\end{align}
where $l \in \{1, \ldots, d\}$.

Note that Theorem~\ref{thm_faces_bc} was only formulated in \cite{MR2967480} for the case of an integral simplex. One can see, however, that
the proof from \cite{MR2967480} can be applied to our more general setting without any changes.
Also, for $l=d$, this result can be found in \rescite{MR1996360}{Lemma 5}; see also \rescite{MR688412}{Theorem 3.4} and 
\rescite{MR1138580}{Lemma 2.3}.

\section{Bounds on barycentric coordinates}\label{sec:auxiliary}

\subsection*{Izhboldin-Kurliandchik type problems}

Because of \eqref{eq:face_vs_bc_max} 
we are interested in the minimal value the product $\beta_a \cdots \beta_{d}$ can attain for any given $a \in \{1, \ldots, d\}$. 
In this section, we will treat a more general problem which we call `Izhboldin-Kurliandchik' problem.
First, we introduce some notation which will be used frequently throughout. For the sake of brevity, let $n := d+1$.
Let $\Chi^n$ denote the set of $n$-tuples $(x_1, \ldots, x_n) \in \R^n$ which fulfil the conditions
\begin{empheq}{align}
& x_1 + \ldots + x_n = 1, &\; \label{cond_sum_one}\\
& 1 \ge x_1 \ge \ldots \ge x_n \ge 0, &\;\label{cond_ordered}\\
& x_1 \cdots x_j \le x_{j+1} + \ldots + x_n \quad \forall \; j \in \{1, \ldots, n-1\}. &\; \label{cond_ps_ineq}
\end{empheq}
Theorem~\ref{thm_sum_prod_bc} shows that every decreasingly sorted $(d+1)$-tuple of barycentric coordinates of the interior integral point of a simplex from $\Sd{1}$
belongs to the set $\Chi^n$. Throughout this section, we shall assume $x_0 := 1$ and $x_{n+1} := 0$.
Furthermore, we introduce additional notation to keep the presentation simple: $\ORD(j)$ denotes the inequality $x_j \ge x_{j+1}$ 
for $j \in \{0, \ldots, n\}$ and will be called the $j$-th \emph{ordering inequality}. 
Similarly, $\PS(j)$ denotes the inequality $x_1 \cdots x_j \le x_{j+1} + \ldots + x_n$ and will be called the $j$-th \emph{product-sum inequality} 
for $x \in \R^n$ and $j \in \{1, \ldots, n-1\}$.

Given a continuous function $f: \Chi^n \rightarrow \R$, we denote by $\IK^n(f)$ the optimization problem of minimizing $f$
over the set $\Chi^n$. This notation is short for `Izhboldin-Kurliandchik problem', as 
Izhboldin and Kurliandchik determined the optimal value and the unique optimal solution of $\IK^n(x_1 \cdots x_n)$ as well as $\IK^n(x_n)$; see \cite{IK_original} and \cite{MR1363298}. For $\IK^n(f)$,
we shall use the standard optimization terminology such as optimal value, optimal solution and feasible solution.

Furthermore, we introduce the following notation to denote special elements of $\Chi^n$: for $l \in \{1, \ldots, n\}$, 
we define the vector $\xquer(l) \in \R^n$ by
\[
 \xquer(l) := \left(\frac{1}{s_1}, \ldots, \frac{1}{s_{l-1}}, \frac{1}{(n-l+1)(s_l - 1)}, \ldots, \frac{1}{(n-l+1)(s_l - 1)} \right).
\]
For $l \in \{1,n\}$, the degenerate cases should be interpreted as $\xquer(1) = \left(\frac{1}{n}, \ldots, \frac{1}{n}\right)$ and $\xquer(n) = \left(\frac{1}{s_1}, \ldots, \frac{1}{s_{n-1}}, \frac{1}{s_n - 1}\right)$.
We can then define the set $\Y$ for every $n \ge 2$ as
\[ 
\Y := \setcond{\xquer(l)}{l = 1, \ldots, n}.
\] 
One can check that every vector $\xquer(l)$ fulfils conditions \eqref{cond_sum_one}--\eqref{cond_ps_ineq} and
thus, $\Y \subseteq \Chi^n$.

\subsection*{Problem $\IK^n(x_a \cdots x_b)$ for general $a$ and $b$}

We modify the
arguments of Izhboldin and Kurliandchik given in \cite{IK_original}
and \cite{MR1363298} to give a more general result about the relation of $\Y$ to the set of optimal solutions of $\IK^n(x_a \cdots x_b)$,
which we state in Lemma~\ref{i_and_k}. In Theorem~\ref{thm_location}, we then show which elements of $\Y$ are optimal solutions of $\IK^n(x_a \cdots x_b)$.

Clearly, $\Chi^n$ is a compact set. It turns out that for every $x \in \Chi^n$ one has $x_1 < 1$ and $x_n > 0$ 
for every $n$.
This and two other basic but important properties, which characterize equality cases for the inequalities describing $\Chi^n$,
are proven in the following lemma. Assertion (c) also establishes the link to the Sylvester sequence.

\begin{lemma}\label{lem_simult_equality}\textup{(Basic properties of $\Chi^n$.)}
 Let $x \in \Chi^n$, where $n \ge 3$. Then the following statements hold:
\begin{itemize}
 \item[(a)] $0 < x_n$ and $1 > x_1$.
 \item[(b)] For $l \in \{1, \ldots, n-1\}$, the inequalities $\ORD(l)$ and $\PS(l)$ for $x$ cannot simultaneously be fulfilled with equality.
 \item[(c)] If for some $l \in \{1, \ldots, n-1\}$, the inequality $\PS(i)$ for $x$ is fulfilled with equality for every $i \in \{1, \ldots, l\}$, then $x_i = \frac{1}{s_i}$ for every $i \in \{1, \ldots, l\}$.
\end{itemize}
\end{lemma}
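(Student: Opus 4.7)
The three assertions are essentially independent local contradiction arguments, so I would prove them in the order they are stated.

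For (a), the idea is a minimal-index argument. Suppose for contradiction that $x_n = 0$. Applying $\PS(n-1)$ gives $x_1 \cdots x_{n-1} \le 0$, so some $x_j = 0$ with $j \le n-1$. Let $j$ be the smallest such index; by $\ORD(j), \ldots, \ORD(n-1)$ we have $x_i = 0$ for all $i \ge j$. If $j = 1$ then all coordinates vanish, contradicting $\sum_i x_i = 1$. If $j \ge 2$, then $x_1, \ldots, x_{j-1} > 0$, and $\PS(j-1)$ reads $x_1 \cdots x_{j-1} \le x_j + \cdots + x_n = 0$, a contradiction. The bound $x_1 < 1$ is even easier: if $x_1 = 1$, then $\sum_{i \ge 2} x_i = 0$ forces $x_2 = \cdots = x_n = 0$, and then $\PS(1)$ becomes $1 \le 0$.

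For (b), I would treat $l = 1$ separately: from $\ORD(1)$ equality $x_1 = x_2$ and $\PS(1)$ equality $x_1 = x_2 + \cdots + x_n$ we get $x_3 + \cdots + x_n = 0$, hence $x_n = 0$, contradicting (a). For $l \ge 2$, I combine $\PS(l-1)$ with the $\PS(l)$-equality: substituting $\sum_{i=l+1}^n x_i = x_1 \cdots x_l$ into $\PS(l-1)$ yields
\[
x_1 \cdots x_{l-1} \;\le\; x_l + x_l \cdot x_1 \cdots x_{l-1},
\]
and since $x_l < 1$ by (a), this rearranges to $x_1 \cdots x_l \le x_l^2 / (1 - x_l)$. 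On the other hand, $\ORD(l)$ equality combined with $\PS(l)$ equality yields $x_1 \cdots x_l = \sum_{i=l+1}^n x_i \ge x_{l+1} = x_l$. Comparing the two gives $x_l \ge 1/2$, hence $\sum_{i=1}^l x_i \ge l/2 \ge 1$ for $l \ge 2$, forcing $\sum_{i=l+1}^n x_i \le 0$, which again contradicts $x_n > 0$ from (a).

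For (c), I would use induction on $i \in \{1, \ldots, l\}$. The base $i = 1$ is immediate: $\PS(1)$ equality together with $\sum_j x_j = 1$ gives $2 x_1 = 1 = 2/s_1$. For the inductive step, assume $x_j = 1/s_j$ for $j < i$, so that by Proposition~\ref{lem_syl_seq} the partial product equals $x_1 \cdots x_{i-1} = 1/(s_1 \cdots s_{i-1}) = 1/(s_i - 1)$ and the partial sum equals $x_1 + \cdots + x_{i-1} = 1 - 1/(s_i - 1)$. Writing $P_i := x_1 \cdots x_i = x_i /(s_i - 1)$ on the one hand, and $P_i = 1 - (x_1 + \cdots + x_i) = 1/(s_i - 1) - x_i$ on the other (using $\PS(i)$ equality and the sum condition), we solve $x_i/(s_i - 1) = 1/(s_i - 1) - x_i$ to obtain $s_i x_i = 1$, as required.

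The main obstacle I anticipate is the algebraic combination in part (b) for $l \ge 2$: one has to spot that multiplying $\PS(l-1)$ by $x_l$ and matching the result against $\PS(l)$ and $\ORD(l)$ is what isolates the bound $x_l \ge 1/2$. Once that step is in place, parts (a) and (c) are routine bookkeeping involving only the sum condition and the Sylvester identity from Proposition~\ref{lem_syl_seq}.
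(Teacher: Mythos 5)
Your proof is correct. Parts (a) and (c) match the paper's argument almost word for word: (a) by propagating zeros down the ordered tuple and invoking $\sum x_i = 1$, and (c) by induction using $\PS(i)$-equality together with Proposition~\ref{lem_syl_seq}(b).

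For part (b) you arrive at a correct contradiction, but by a noticeably longer route than the paper's. The paper observes that $\ORD(l)$-equality plus $\PS(l)$-equality forces the chain
\[
x_l \;\ge\; x_1 \cdots x_l \;=\; x_{l+1} + \cdots + x_n \;\ge\; x_{l+1},
\]
and then simply counts: the left inequality is strict as soon as the product has at least two factors (i.e.\ $l \ge 2$, since then $x_1 \cdots x_{l-1} < 1$ by (a)), and the right inequality is strict as soon as the sum has at least two summands (i.e.\ $l \le n-2$, since $x_n > 0$ by (a)); since $n \ge 3$ at least one of these holds, giving $x_l > x_{l+1}$, which contradicts $\ORD(l)$-equality. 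Your argument instead invokes $\PS(l-1)$, isolates the bound $x_l \ge 1/2$, and then sums. That works, but note that already your intermediate relation $x_1 \cdots x_l \ge x_l$ gives $x_1 \cdots x_{l-1} \ge 1$ after division by $x_l>0$, which contradicts (a) directly when $l \ge 2$ — so the whole detour through $\PS(l-1)$ and the $x_l \ge 1/2$ bound is unnecessary. The paper's version is the tighter packaging of this observation.
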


\begin{proof}
To prove assertion (a), observe that if $x_n = 0$,
then by $\PS(n-1)$ also $x_1 \cdots x_{n-1} = 0$ and the ordering inequalities then imply $x_{n-1} = 0$. Iteratively, the above argument leads to $x_i = 0$ for 
every $i \in \{1, \ldots, n\}$, yielding a contradiction to $x_1 + \ldots + x_n = 1$. If $x_1 = 1$, 
then in view of $x_1 + \ldots + x_n = 1$, we get $x_2 = \ldots = x_n = 0$, a contradiction to $\PS(1)$.

We show assertion (b) by proving that if for some $l \in \{1, \ldots, n-1\}$ we have $x_1 \cdots x_l = x_{l+1} + \ldots + x_n$, this implies $x_l > x_{l+1}$. 
Since $n \ge 3$, the product $x_1 \cdots x_l$ contains more than one factor or the sum $x_{l+1} + \cdots + x_n$ contains more than one summand. Hence, in
view of (a), 
\[
 x_l \ge x_1 \cdots x_l = x_{l+1} + \ldots + x_n \ge x_{l+1},
\]
where at least one of the two inequalities is strict. Thus, we obtain $x_l > x_{l+1}$.

Assertion (c) follows by induction on $i$. For $i = 1$, from $x_1 = 1 - x_1$ we get $x_1 = 1/2 = 1/s_1$. Assuming the statement holds up to some $i \in \{1, \ldots, l-1\}$,
we get from the definition of the Sylvester sequence, Proposition~\ref{lem_syl_seq} and $x_1 \cdots x_i x_{i+1} = 1 - (x_1 + \ldots + x_i + x_{i+1})$:
\[
 \frac{x_{i+1}}{s_{i+1}-1} = \frac{1}{s_{i+1}-1} - x_{i+1}
\]
and hence $x_{i+1} = \frac{1}{s_{i+1}}$.
\end{proof}

The aim of this section is to find optimal solutions for $\IK^n(x_a \cdots x_b)$, where $1 \le a \le b \le n$.
In the following lemma, we deal with the more general problem $\IK^n(x_1^{\alpha_1} \cdots x_n^{\alpha_n})$ for real numbers $\alpha_1, \ldots, \alpha_n$. 
Using a limit argument, we can then derive assertions about $\IK^n(x_a \cdots x_b)$.

\begin{lemma}\label{i_and_k} \thmtitle{On optimal solutions of $\IK^n(x_1^{\alpha_1} \cdots x_n^{\alpha_n})$}
  Let $n \ge 2$, $b \in \{1, \ldots, n\}$ and $\alpha_1, \ldots, \alpha_n \in \R$ be such that $0 \le \alpha_i \le \alpha_j$ for all $1 \le i \le j \le b$
and $\alpha_i = \alpha_j \le 0$ for all $i,j > b$. Then the following assertions hold:
\begin{itemize}
 \item [(a)] There exists an optimal solution $x^{\ast}$ for $\IK^n(x_1^{\alpha_1} \cdots x_n^{\alpha_n})$ with $x^{\ast} \in \Y$.
 \item [(b)] If $\alpha_i \neq 0$ for all $i \in \{1, \ldots, n\}$, then all optimal solutions of $\IK^n(x_1^{\alpha_1} \cdots x_n^{\alpha_n})$ are in $\Y$.
 \item [(c)] If $b=n-1$, $\alpha_i > 0$ for all $i \neq n$ and $\alpha_n = 0$, then all optimal solutions of $\IK^n(x_1^{\alpha_1} \cdots x_n^{\alpha_n})$ are in $\Y$.
 \item [(d)] If $a = 1$, $b = n$ and $\alpha_i > 0$ for all $i \in \{1, \ldots, n\}$, then the unique optimal solution to $\IK^n(x_1^{\alpha_1} \cdots x_n^{\alpha_n})$ is
\[
 \left(\frac{1}{s_1}, \ldots, \frac{1}{s_{n-1}}, \frac{1}{s_n - 1} \right).
\]
\end{itemize}
\end{lemma}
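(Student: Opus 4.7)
I would start by noting that $\Chi^n$ is compact (closed and bounded) and, by Lemma~\ref{lem_simult_equality}(a), sits inside $(0,1)^n$, so the function $f(x) := x_1^{\alpha_1} \cdots x_n^{\alpha_n}$ is continuous on $\Chi^n$ and attains its minimum. Next I would record the following characterization of $\Y$, immediate from Lemma~\ref{lem_simult_equality}(b), (c) together with $\sum_i x_i = 1$: a vector $x \in \Chi^n$ belongs to $\Y$ if and only if there exists $l \in \{1, \ldots, n\}$ with $\PS(i)$ active for every $i \in \{1, \ldots, l-1\}$ and $\ORD(i)$ active for every $i \in \{l, \ldots, n-1\}$. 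Part (a) therefore reduces to showing that some minimizer has this ``PS-then-ORD'' pattern, and parts (b), (c) to showing that every minimizer does.

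The central tool is a two-coordinate perturbation, in the spirit of Izhboldin--Kurliandchik \cite{IK_original, MR1363298}: at a minimizer $x^*$, I would consider the map $(x_l, x_{l+1}) \mapsto (x_l + \epsilon, x_{l+1} - \epsilon)$ for a suitable index $l$. The first-order change of $\log f$ is $\epsilon \bigl( \alpha_l / x^*_l - \alpha_{l+1} / x^*_{l+1} \bigr)$, and the structural hypotheses on $(\alpha_i)$---non-negative and non-decreasing on $\{1, \ldots, b\}$, constant and non-positive on $\{b+1, \ldots, n\}$---combined with $x^*_l \ge x^*_{l+1}$, force one of the two signs of $\epsilon$ to yield a non-positive change. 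Choosing $l$ to be the largest index at which the $\Y$-pattern fails, I would argue that the surrounding constraints provide enough slack to push $\epsilon$ until a previously strict constraint becomes tight; iterating this cleanup moves the optimum into $\Y$ without increasing $f$, proving (a).

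For (b) and (c), a strict descent is needed. In (b), where $\alpha_i \neq 0$ for all $i$, the first-order change above is strictly signed unless $\alpha_l / x^*_l = \alpha_{l+1} / x^*_{l+1}$; together with the sign and ordering hypotheses, this forces $\alpha_l = \alpha_{l+1}$ and $x^*_l = x^*_{l+1}$, so $\ORD(l)$ is already active---contradiction if we assumed otherwise. Within the plateau region $\{b+1, \ldots, n\}$ the first-order term vanishes identically, but a second-order expansion of $\log f$ (convex in $\epsilon$ when $\alpha < 0$) rules out strictly slack $\ORD$ constraints there as well. Part (c) is handled analogously by regarding the problem as an optimization in $(x_1, \ldots, x_{n-1})$ with all strictly positive exponents, after noting that $\alpha_n = 0$ makes $x_n$ inactive in $f$.

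The main obstacle is that a perturbation at coordinates $(l, l+1)$ also affects $\PS(j)$ for every $j > l$, through the product $(x_l + \epsilon)(x_{l+1} - \epsilon) = x_l x_{l+1} + \epsilon(x_{l+1} - x_l) - \epsilon^2$; the sign of the first-order change of this product is controlled by $x_{l+1} - x_l$, so interaction with a possibly strict $\ORD(l)$ can obstruct one of the two sign choices of $\epsilon$. Managing this coupling---together with the interplay of the neighboring constraints $\ORD(l-1)$, $\ORD(l+1)$, $\PS(l-1)$, $\PS(l)$ that may also be tight---demands a careful case analysis, occasionally upgrading to a three-coordinate perturbation, and is the principal technical difficulty of the proof.
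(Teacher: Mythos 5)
Your plan is in the right spirit and uses the same central tool as the paper: two-coordinate perturbations at a minimizer, exploiting the sign pattern of the exponents. You also correctly identify the main technical difficulty (the interaction between a perturbation at $(l,l+1)$ and the product-sum constraints $\PS(j)$ for $j > l$). However, the sketch leaves two substantive gaps.

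The first concerns part (a). You propose to reach $\Y$ directly by iterating a cleanup at any minimizer, ``without increasing $f$.'' When some $\alpha_i = 0$ (the only situation part (a) adds beyond part (b)), the perturbation at those coordinates is first-order neutral, so $f$ gives you no pressure in either direction. You then have to argue that (i) one of the two sign choices of $\epsilon$ can always be pushed until a constraint becomes active without leaving $\Chi^n$, and (ii) the iteration terminates at a point of $\Y$; neither of these is addressed, and $\Chi^n$ is not convex (the $\PS(j)$ constraints are nonlinear), so the optimal set need not be nicely connected to $\Y$. The paper avoids this entirely: it proves (b) first (under $\alpha_i \neq 0$), then proves (a) by continuously perturbing the exponents to be nonzero, applying (b) to get minimizers in the finite set $\Y$, and passing to a limit along a subsequence where the minimizer is constant. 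You should either supply a termination argument for the cleanup or switch to this limit argument.

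The second gap is structural. For part (b), a naive first-order stationarity argument at a constrained minimizer is not valid: with active inequality constraints the gradient need not vanish. The paper instead proceeds in three carefully ordered steps, each a feasibility-preserving descent: first (the Claim) that the tail $x_b^*, \ldots, x_n^*$ is constant (perturb at the largest index $j \geq b$ with $\ORD(j)$ strict, toward $x_j - \delta, x_{j+1} + \delta$); second, that $\ORD(j)$ is strict for all $j$ below the breakpoint $k$ (here the perturbation is across a maximal block of \emph{equal} coordinates $x_{i_1}^* = \cdots = x_{i_2}^*$, moving $x_{i_1}^* \mapsto x_{i_1}^* + \delta$, $x_{i_2}^* \mapsto x_{i_2}^* - \delta$, so feasibility is automatic and $\alpha_{i_1} \le \alpha_{i_2}$ gives strict descent); and third, that $\PS(j)$ is tight for $j < k$ (here an adjacent perturbation $x_i^* \mapsto x_i^* + \delta$, $x_{i+1}^* \mapsto x_{i+1}^* - \delta$ works because both neighboring $\ORD$ constraints are already known to be strict). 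Your description of ``the largest index at which the $\Y$-pattern fails'' does not distinguish these cases, and the non-adjacent, equal-coordinate perturbation of the middle step is the mechanism that sidesteps the coupling problem you worry about---no three-coordinate perturbation is needed. Your reading of part (c) as an $(n-1)$-variable problem is also a bit too loose, since $x_n$ still appears in $\ORD(n-1)$, $\PS(n-1)$ and the sum constraint; the paper instead first shows $x_{n-1}^* = x_n^*$ for every minimizer and then reuses the machinery from (b).
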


\begin{proof}
The proof is organized as follows. We start by proving an auxiliary claim. Then we prove (b) and subsequently deduce (a) as a consequence of (b). Next,
we observe that the proof of (b) can be slightly modified to give a proof of (c). Finally, we prove (d) by showing that under the additional assumptions,
the arguments used in the proof of (b) allow an explicit description of the optimal solution.

\begin{claim}\label{claim_ik}
If $b < n$ and $\alpha_i < 0$ for all $i \in \{b+1, \ldots, n\}$, then every optimal solution $x^{\ast}$ of $\IK^n(x_1^{\alpha_1} \cdots x_n^{\alpha_n})$ has the 
property
\begin{align}
 x_b^{\ast} = \ldots = x_n^{\ast}. \label{last_few_equal}
\end{align}
\end{claim}

\begin{proof}[Proof of the claim]\renewcommand{\qedsymbol}{$\blacksquare$}
We assume the contrary, i.e., the existence of an optimal solution $x^{\ast} \in \Chi^n$ which does not satisfy \eqref{last_few_equal}. Let $j \in \{b, \ldots, n-1\}$ 
be the maximal index such that 
$\ORD(j)$ is strict for $x^{\ast}$. This allows us to construct an
element $x' \in \Chi^n$ by subtracting a small $\delta > 0$ from the $j$-th component of $x^{\ast}$ and adding $\delta$ to the $(j+1)$-th component.
To see that $x'$ is indeed an element of $\Chi^n$, observe that its components still sum to one and all ordering inequalities are still fulfilled.
It remains to check that $\PS(i)$ remains valid for every $i$. For $i < j$, this is obviously the case: neither the product part nor
the sum part of the inequalities $\PS(1), \ldots, \PS(j-1)$ are affected. $\PS(j)$ also remains valid as its product part becomes smaller while its sum part becomes
larger. For $i > j$, by maximality of $j$,
we have that $\ORD(i)$ is fulfilled with equality for $x^{\ast}$. So, by Lemma~\ref{lem_simult_equality}(b), the inequality $\PS(i)$ has to be strict for $x^{\ast}$ 
and hence remains valid if $\delta$ is chosen small enough. Now we show that $x'$ is a better feasible solution of $\IK^n(x_1^{\alpha_1} \cdots x_n^{\alpha_n})$.
First we treat the case $j>b$, for which the values of the objective function at $x^{\ast}$ and $x'$ differ by the factor
\begin{align}
 \frac{(x'_1)^{\alpha_1} \cdots (x'_n)^{\alpha_n}}{(x^{\ast}_1)^{\alpha_1} \cdots (x^{\ast}_n)^{\alpha_n}} = \left(1 - \frac{\delta}{x_j^{\ast}} \right)^{\alpha} \left( 1 + \frac{\delta}{x_{j+1}^{\ast}}\right)^{\alpha} = \left(1 + \frac{\delta}{x_{j+1}^{\ast}} - \frac{\delta}{x_j^{\ast}} - \frac{\delta^2}{x_j^{\ast} x_{j+1}^{\ast}} \right)^{\alpha}, \label{eq:par_eq}
\end{align}
where $\alpha := \alpha_j = \alpha_{j+1}$. 
For a sufficiently small $\delta > 0$, this factor is less than one as $\ORD(j)$ is strict for $x^{\ast}$ and $\alpha < 0$.
% The term $(x^{\ast}_j - \delta)(x^{\ast}_{j+1} + \delta)$ in \eqref{eq:par_eq} describes a quadratic function 
% in $\delta$ with roots $x_j^{\ast}$ and $-x_{j+1}^{\ast}$ and since the former
% has larger absolute value, the function's maximum is attained for some $\delta > 0$. Hence, if $\delta > 0$ is restricted to sufficiently small values,
% then $(x^{\ast}_j - \delta)(x^{\ast}_{j+1} + \delta)$ is monotonously increasing in $\delta$ and, since $\alpha < 0$, the expression in \eqref{eq:par_eq}
% is monotonously decreasing in $\delta$. 
This yields the desired contradiction.
If $j=b$, then the contradiction is more easily
obtained, as in this case $\alpha_j \ge 0 > \alpha_{j+1}$. Hence the contradiction follows immediately from $(x_j^{\ast} - \delta)/x_j^{\ast} < 1 < (x_{j+1}^{\ast} + \delta)/x_{j+1}^{\ast}$.
This proves the claim.
\end{proof}

{\em Assertion (b).} Assume that $\alpha_i \neq 0$ for all $i \in \{1, \ldots, n\}$. Let $x^{\ast}$ be an optimal solution of $\IK^n(x_1^{\alpha_1} \cdots x_n^{\alpha_n})$. 
We show $x^{\ast} \in \Y$.
By $k \in \{0, \ldots, n-1\}$ we denote the index 
such that $x_k^{\ast} > x_{k+1}^{\ast} = \ldots = x_n^{\ast}$. The degenerate case $k=0$ is trivial, as then $x^{\ast} = (1/n, \ldots, 1/n) \in \Y$.
We therefore assume $k \ge 1$. Observe that by Claim~\ref{claim_ik}, if $b<n$, then this implies $k \le b-1$ and hence $\alpha_i > 0$ for $i \in \{1, \ldots, k\}$.
We show that, for all $j < k$, the inequality $\ORD(j)$ is strict. In view of Lemma~\ref{lem_simult_equality}(a), the inequality $\ORD(0)$ is strict
and hence for $k=1$, there is nothing to show. It remains to show that for $k \ge 2$, the inequality $\ORD(j)$ is strict for every $j < k$.
Note that $\ORD(k)$ is strict by construction and, as stated above, $\ORD(0)$ is strict as well.
Thus, if there exists $j \in \{1, \ldots, k-1\}$ such that $\ORD(j)$ is fulfilled with equality, one can determine
a pair of indices $i_1, i_2$ with $1 \le i_1 < i_2 \le k$ such that, for $x^{\ast}$, the inequalities 
$\ORD(i_1 - 1)$ and $\ORD(i_2)$ are strict, while
$\ORD(j)$ is fulfilled with equality for $j \in \{i_1, \ldots, i_2 - 1\}$. We can again perturb $x^{\ast}$ to some $x' \in \Chi^n$ by adding a small 
$\delta > 0$ to $x_{i_1}^{\ast}$ and subtracting $\delta$ from $x_{i_2}^{\ast}$. For convenience, we write $y$ for the value of $x_{i_1}^{\ast}$ and $x_{i_2}^{\ast}$,
which are equal by construction. Again, to prove $x' \in \Chi^n$ we only need to look at the product-sum inequalities.
For $j < i_1$, both sides of $\PS(j)$ remain unchanged. For $j \in \{i_1, \ldots, i_2 - 1\}$, Lemma~\ref{lem_simult_equality}(b) yields that $\PS(j)$ is strict for $x^{\ast}$ and
hence remains valid for $x'$ if $\delta > 0$ is small enough. For $j \ge i_2$, only the product part of $\PS(j)$ is affected. To be more precise, the product part changes by the factor $(y + \delta)(y - \delta)/y^2$, which is less than one. 
Thus, $\PS(j)$ is still valid for $x'$ and we conclude that $x' \in \Chi^n$. Passing from $x^{\ast} \in \Chi^n$ to $x' \in \Chi^n$, the value of the
objective function changes by the factor
\[
 \left(\frac{y + \delta}{y}\right)^{\alpha_{i_1}} \left(\frac{y - \delta}{y}\right)^{\alpha_{i_2}} \le \left(1 - \frac{\delta^2}{y^2} \right)^{\alpha_{i_2}} < 1
\]
since $\alpha_{i_1} \le \alpha_{i_2}$.
This is a contradiction to the choice of $x^{\ast}$. We deduce that $\ORD(j)$ is strict for every $j < k$. Next, we show that for $x^{\ast} \in \Chi^n$ and 
every $1 \le j < k$, the inequality $\PS(j)$ is fulfilled with equality. We assume the contrary. Then $k \ge 2$ and there exists some index $i \in \{1, \ldots, k-1\}$ such that $\PS(i)$ is strict.
Since $\ORD(i-1)$ and $\ORD(i+1)$ are strict, we can again add a $\delta > 0$ to $x_i^{\ast}$ and subtract $\delta$ from $x_{i+1}^{\ast}$ to obtain a new element $x'$ of $\Chi^n$
by the same arguments as above. This is a contradiction since 
the values of the objective function $x_1^{\alpha_1} \cdots x_n^{\alpha_n}$ at $x'$ and at $x^{\ast}$
differ by the factor
\begin{align}
 \frac{(x_i^{\ast} + \delta)^{\alpha_i}(x_{i+1}^{\ast} - \delta)^{\alpha_{i+1}}}{x_i^{*\alpha_i}x_{i+1}^{*\alpha_{i+1}}} &= \left(1 + \frac{\delta}{x_i^{\ast}}\right)^{\alpha_i}\left(1 - \frac{\delta}{x_{i+1}^{\ast}}\right)^{\alpha_{i+1}} \notag \\
& \le \left(1 + \frac{\delta}{x_i^{\ast}} - \frac{\delta}{x_{i+1}^{\ast}} - \frac{\delta^2}{x_i^{\ast} x_{i+1}^{\ast}}\right)^{\alpha_{i+1}}. \label{eq:ps_still_valid}
\end{align}
For a sufficiently small $\delta > 0$, this factor is less than one since $\alpha_{i+1} > 0$. This is a contradiction to the choice of $x^{\ast}$.

We show now that $\PS(k)$ is also fulfilled with equality. Suppose
that this is not the case, i.e., $x^{\ast}_1 \cdots x^{\ast}_k < x^{\ast}_{k+1} + \ldots + x^{\ast}_n$. Note that we have $x^{\ast}_{k-1} > x^{\ast}_k > x^{\ast}_{k+1} = \ldots = x^{\ast}_n$. 
By Lemma~\ref{lem_simult_equality}(b), this implies that for $k+1 \le j \le n-1$, the inequality $\PS(j)$ is strict for $x^{\ast}$. Let $\delta$ be close to $0$, but not necessarily positive. We define $x(\delta) := (x^{\ast}_1, \ldots, x^{\ast}_{k-1}, x^{\ast}_k + \delta(n-k), x^{\ast}_{k+1} - \delta, \ldots, x^{\ast}_n - \delta)$.
Observe that, if $\delta$ is sufficiently close to $0$, $x(\delta) \in \Chi^n$. To see this, note that $\ORD(j)$ is unaffected for $1 \le j < k-1$, while $\ORD(k-1)$ and $\ORD(k)$ remain valid for $x$ if $\delta$ is sufficiently
close to $0$. For $k+1 \le j \le n-1$, both sides of $\ORD(j)$ change by the same amount. $\PS(j)$ obviously remains valid for $1 \le j \le k-1$. Since for $k \le j \le n-1$,
the inequality $\PS(j)$ is strict for $x^{\ast}$ it remains valid for $x$ if $\delta$ is sufficiently close to $0$.

We consider the function $f$ in $\delta$ given by
\[
 f(\delta) = (x_1^{\ast})^{\alpha_1} \cdots (x_{k-1}^{\ast})^{\alpha_{k-1}} (x_k^{\ast} + \delta (n-k))^{\alpha_k} (x_{k+1}^{\ast} - \delta)^{\alpha_k} \cdots (x_n^{\ast} - \delta)^{\alpha_n}.
\]
We want to show that $f$ does not have a local minimum in $\delta = 0$ and hence, $x^{\ast} = x(0)$ cannot be an optimal solution
of $\IK^n(x_1^{\alpha_1} \cdots x_n^{\alpha_n})$.  As $f(0)$
is fixed (and not $0$), it is more convenient to analyze the scaled function $g(\delta) = f(\delta) / f(0)$. For convenience,
we also write $y := x^{\ast}_{k+1} = \ldots = x^{\ast}_n$ and $\beta = \alpha_{k+1} + \ldots + \alpha_n$. Using this notation, we have
\[
 g(\delta) = \left( 1 + \frac{\delta(n-k)}{x^{\ast}_k} \right)^{\alpha_k} \left( 1- \frac{\delta}{y}\right)^{\beta}.
\]
The first derivative at $\delta = 0$ is
\[
 g'(0) = \frac{\alpha_k (n-k)}{x^{\ast}_k} - \frac{\beta}{y}.
\]
If $g'(0) \neq 0$, $0$ is not a local minimum of $g$. If $g'(0) = 0$, we have $\alpha_k (n-k) y = \beta x^{\ast}_k$. 
Let us assume that this holds, as otherwise, we are done.
Note that since $\alpha_k$, $n-k$, $y$ and $x^{\ast}_k$ are all positive, this implies $\beta > 0$.
We want to show that if $g'(0) = 0$, then $0$ is strict local maximum of $g$. Hence, we need to show $g''(0) < 0$.
We now consider the second derivative at $\delta = 0$, which is
\[
 g''(0) = \alpha_k (\alpha_k - 1) \frac{(n-k)^2}{(x^{\ast}_k)^2} - 2 \frac{\alpha_k (n-k) \beta}{x^{\ast}_k y} + \frac{\beta (\beta - 1)}{y^2}.
\]
In view of the assumption $\alpha_k (n-k) y = \beta x^{\ast}_k$, we get
\[
 g''(0) = \frac{\beta^2}{y^2} - \frac{(n-k)\beta}{x^{\ast}_k y}  - 2 \frac{\beta^2}{y^2} + \frac{\beta^2 - \beta}{y^2}.
\]
As $\beta > 0$ and $y > 0$, for $g''(0) < 0$ it suffices to have
\[
 - \frac{(n-k)}{x^{\ast}_k} - \frac{1}{y} < 0. 
\]
Since $n-k$, $x^{\ast}_k$ and $y$ are positive, this statement is true. Hence, $x^{\ast}$ cannot be an optimal solution 
of $\IK^n(x_1^{\alpha_1} \cdots x_n^{\alpha_n})$, which is a contradiction to the choice of $x^{\ast}$.
Thus, we deduce that $\PS(k)$ is fulfilled with equality.

Hence, we have that for every optimal solution $x^{\ast}$ under the assumptions that $\alpha_i \neq 0$ for every $i \in \{1, \ldots, n\}$, inequality $\PS(j)$ is satisfied with equality 
for every $1 \le j \le k$. By 
Lemma~\ref{lem_simult_equality}(c), we get that $x_j^{\ast} = 1/s_j$ for $j \in \{1, \ldots, k\}$. Since $x_j$ has been uniquely determined for $j \in \{1, \ldots, k\}$,
the value for $x_{k+1}^{\ast} = \ldots = x_n^{\ast}$ can be uniquely determined from $x_1 + \ldots + x_n = 1$. In view of Proposition~\ref{lem_syl_seq}(b), we get 
\[
 x_{k+1}^{\ast} = \ldots = x_n^{\ast} = \frac{1}{(n-k)(s_{k+1} - 1)}
\]
and hence $x^{\ast} \in \Y$, thereby proving (b).

{\em Assertion (a).} We now drop the assumption $\alpha_i \neq 0$ for all $i \in \{1, \ldots, n\}$. Note that since for each $i \in \{1, \ldots, n\}$
and every $x \in \Chi^n$ we have $x_i > 0$, the expression $x_i^{\alpha_i}$ is continuous in $\alpha_i$. As a consequence,
for every fixed $x \in \Chi^n$, the expression $x_1^{\alpha_1} \cdots x_n^{\alpha_n}$ is continuous in $(\alpha_1, \ldots, \alpha_n)$.
Hence, an optimal solution can be
deduced from (b) by a limit argument. For $t \in \N$, we introduce the function 
$f_{t}(x) := x_1^{\alpha_1 + \frac{1}{t}} \cdots x_b^{\alpha_b + \frac{1}{t}} x_{b+1}^{\alpha_{b+1} - \frac{1}{t}} \cdots x_n^{\alpha_n - \frac{1}{t}}$.
By (b), the problem $\IK^n(f_{t})$ has an optimal solution $x_{t}^{\ast}$ belonging to $\Y$, i.e. 
\begin{align}
f_{t}(x_{t}^{\ast}) \le f_{t}(x) \label{eq:limit}
\end{align}
for every $x \in \Chi^n$. 
Since $\Y$ is finite, there exists an element $y \in \Y$ and an infinite set $T \subseteq \N$ such that for every $t \in T$, we have $x_{t}^{\ast} = y$.
Passing to the limit in \eqref{eq:limit} as $t \in T$ tends to $\infty$, we deduce that $y$ is an optimal solution of $\IK^n(x_1^{\alpha_1} \cdots x_n^{\alpha_n})$. This shows (a).

{\em Assertion (c).}
Note that because we pass to the limit, we cannot guarantee that in the case of a general choice of $(\alpha_1, \ldots, \alpha_n)$ all optimal solutions
of $\IK^n(x_1^{\alpha_1} \cdots x_n^{\alpha_n})$ are in $\Y$. In particular, one needs a different argument to prove (c).
We show that if $\alpha_n = 0$ but $\alpha_i > 0$ for $i \neq n$, we have that $x^{\ast}_{n-1} = x^{\ast}_n$ holds for every optimal solution $x^{\ast}$. To see this, assume
the contrary, i.e. $x^{\ast}_{n-1} > x^{\ast}_n$. Then, we can perturb $x^{\ast}$ into some $x'$ by setting $x'_i := x_i^{\ast}$ for $i \in \{1, \ldots, n-2\}$,
$x'_{n-1} := x_{n-1}^{\ast} - \delta$ and $x'_n := x_n^{\ast} + \delta$ for $\delta > 0$ sufficiently small. Obviously, the product-sum inequalities remain valid and thus $x' \in \Chi^n$.
This leads to
\[
(x'_1)^{\alpha_1} \cdots (x'_{n-1})^{\alpha_{n-1}} < (x_1^{\ast})^{\alpha_1} \cdots (x_{n-1}^{\ast})^{\alpha_{n-1}}, 
\]
a contradiction to the choice of $x^{\ast}$. Now we can apply the arguments of
assertion (b) to obtain (c).

{\em Assertion (d).} We
show that if $x^{\ast}$ is an optimal solution, $x^{\ast}$ satisfies $\PS(j)$ with equality for every $j \in \{1, \ldots, n-1\}$.
Let $i_1,i_2 \in \{1, \ldots, n-1\}$ with $i_1 < i_2$ be such that for $x^{\ast}$, the inequality $\PS(j)$ is strict for $i_1, \ldots, i_2$, but
$\PS(i_1 - 1)$ and $\PS(i_2 + 1)$ are fulfilled with equality. Of course, the requirement on $\PS(i_1 - 1)$ is only considered if $i_1 > 1$ and on $\PS(i_2 + 1)$
only if $i_2 < n-1$. 
Then by Lemma~\ref{lem_simult_equality}(b), $\ORD(i_1 -1)$ and $\ORD(i_2 +1)$ are strict. Note that for the degenerate cases $i_1 = 1$ or $i_2 = n-1$
this still holds and hence we do not need to consider them separately.
We consider $x'$ given by $x'_i := x^{\ast}_i$ for $i \in \{1, \ldots, n\} \setminus \{i_1,i_2\}$ and $x'_{i_1} := x^{\ast}_{i_1} + \delta$,
$x'_{i_2 + 1} := x^{\ast}_{i_2 + 1} - \delta$ for some $\delta > 0$. Since $\ORD(i_1 - 1)$ and $\ORD(i_2 + 1)$ are strict, they remain valid for $x'$
if $\delta$ is sufficiently small. To show $x' \in \Chi^n$, we check that $x'$ satisfies all product-sum inequalities. This can be done in the same way as
in the proof of assertion (b). We then argue that $x'$ is a better solution to $\IK^n(x_1^{\alpha_1} \cdots x_n^{\alpha_n})$ by repeating 
the argumentation leading to \eqref{eq:ps_still_valid} with $i_1$ instead of $i$ and $i_2 + 1$ instead of $i+1$. This yields a contradiction and
proves that $x^{\ast}$ satisfies $\PS(j)$ with equality for every $j \in \{1, \ldots, n-1\}$. Applying Lemma~\ref{lem_simult_equality}(c)
then yields $x_j = 1/s_j$ for every $j \in \{1, \ldots, n-1\}$ and from $x^{\ast}_1 + \ldots + x^{\ast}_n = 1$ we obtain $x_n = 1/(s_n - 1)$. 
\end{proof}

We remark that Lemma~\ref{i_and_k}(d) is a generalization of the result proved by Izhboldin and Kurliandchik, who assumed $\alpha_i = 1$ instead
of $\alpha_i > 0$ for every $i \in \{1, \ldots, n\}$.
In the following, we analyze the problem $\IK^n(x_a \cdots x_b)$ for specific choices of $a$ and $b$.
Lemma~\ref{i_and_k} states that for every $a,b$ such that $1 \le a \le b \le n$, we find an optimal solution of $\IK^n(x_a \cdots x_b)$ in $\Y$.
The following theorem now deals with the question
which elements of $\Y$ are optimal solutions of $\IK^n(x_a \cdots x_b)$ depending on the choice of $a$ and $b$.

\begin{theorem}\label{thm_location}\thmtitle{Localization of optimal solutions of $\IK^n(x_a \cdots x_b)$ within $\Y$}
 Let $n,a,b \in \N$ be such that $n \ge 4$ and $1 \le a \le b \le n$. Let $l \in \{1, \ldots, n\}$. Then the following statements hold:
\begin{itemize}
\item[(a)] If $a=b$, then $\xquer(b)$ is an optimal solution of $\IK^n(x_b)$.
\item [(b)] If $b < n-1$, $a < b$ and $\xquer(l)$ is an optimal solution of $\IK^n(x_a \cdots x_b)$, then $a \le l \le \min \{b, 2 + \log_2\log_2 (n e)\}$ or $l = b$.
% \item[(c)] If $a=1$ and $b=n$, then 
% \[
%  \xquer(n) = \left( \frac{1}{s_1}, \cdots, \frac{1}{s_{n-1}}, \frac{1}{s_n - 1}\right)
% \]
% is the unique optimal solution of $\IK^n(x_1 \cdots x_n)$.
\item [(c)] If $b=n$, then $\xquer(l)$ is an optimal solution of $\IK^n(x_a \cdots x_n)$ if and only if $l = n$. 
\item [(d)] If $b = n-1$, then $\xquer(l)$ is an optimal solution of $\IK^n(x_a \cdots x_b)$ if and only if $l = n-1$ or
$n=4$, $1 \le a \le 2$, $l=2$.
\item [(e)] If $n \ge 5$, $a=1$ and $b=n-1$, then
\[
 \xquer(n-1) = \left( \frac{1}{s_1}, \cdots, \frac{1}{s_{n-2}}, \frac{1}{2(s_{n-1}-1)}, \frac{1}{2(s_{n-1}-1)}\right)
\]
is the unique optimal solution of $\IK^n(x_1 \cdots x_{n-1})$. If $n=4$ and $a=1$ and $b=3$, then
$\xquer(2) = (1/2, 1/6, 1/6, 1/6)$ and $\xquer(3) = (1/2, 1/3, 1/12, 1/12)$ are the only optimal solutions of $\IK^4(x_1 x_2 x_3)$.
\end{itemize}
\end{theorem}

\begin{proof}
By Lemma~\ref{i_and_k}(a), for every choice of $a,b$ there exists an optimal solution of $\IK^n(x_a \cdots x_b)$ which is in $\Y$
and hence can be expressed as $\xquer(l)$. 
For the proof, we view $l$ as a variable ranging in $\{1, \ldots, n\}$. Let $y(l) = (y(l)_1, \ldots, y(l)_n)$. We introduce the function $f(l) := 1 / (\xquer(l)_a \cdots \xquer(l)_b)$. 
The structure of the proof is as follows: first, we show that $a \le l \le b$
if $l$
maximizes $f(l)$.  
We then show that $f(l)$ is strictly monotonously increasing in $l$
provided $l$ ranges in $\{a, \ldots, b\}$ and is not too close to $a$ or $b$.

\begin{claim}\label{claim_loc_1}
 If $\xquer(l)$ is an optimal solution of $\IK^n(x_a \cdots x_b)$, then $l \in \{a, \ldots, b\}$.
\end{claim}

\begin{proof}[Proof of the claim]\renewcommand{\qedsymbol}{$\blacksquare$}
We first show that if $\xquer(l)$ is an optimal solution of $\IK^n(x_a \cdots x_b)$, then $l \ge a$. 
This is obviously true if $a = 1$. Assume $a > 1$ and $l < a$. We compare $f(l)$ and $f(l+1)$ and want to show the inequality
\[
 f(l) = ((n-l+1)(s_l - 1))^{b-a+1} < ((n-l)(s_{l+1}-1))^{b-a+1} = f(l+1).
\]
Using $s_{l+1} - 1 = s_l(s_l-1)$ we can reformulate the desired inequality as
\[
 (n-l+1)(s_l - 1) < (n-l)(s_l-1)s_l
\]
or, equivalently,
$1 < (s_l - 1)(n-l)$. The latter is clearly true since for $l = 1$, one has $s_l = 2$ and $n - l \ge 3$ while for $l > 1$, one has $s_l \ge 3$
and $n - l \ge 1$. This shows that if $l$ maximizes $f(l)$, we have $l \ge a$.
To conclude the proof of the claim it remains to show that if $f(l)$ is maximized for $l \in \{1, \ldots, n\}$, then $l \le b$. 
We assume $b < n$, because otherwise there is nothing to
show. Observe that by the definition of the Sylvester sequence
\[
f(b) = \frac{(n-b+1)(s_b - 1)^2}{s_a - 1}, 
\]
while for $l > b$,
\[
f(l) = \frac{(s_b - 1)s_b}{s_a - 1}. 
\]
Hence, for $l > b$ we have that $f(b) > f(l)$ holds whenever $(n-b+1)(s_b - 1) > s_b$ or, equivalently, $(n-b)(s_b - 1) > 1$.
If $b \ge 2$, i.e., $s_b - 1 \ge 2$, this holds because $n > b$. If $b = 1$, we have $s_b - 1 = 1$ and since $n \ge 4$,
we have $n - b \ge 3$.
Thus, if$f(l)$ attains the maximal value, then $l \in \{a, \ldots, b\}$.
\end{proof}

{\em Assertion (a).} Claim~\ref{claim_loc_1} shows that if $a = b$, then $l = a = b$. Together with Lemma~\ref{i_and_k}(a), this proves assertion (a). 

\begin{claim}\label{claim_loc_late_add}
 Let $a \le l,l' \le b$. Then $f(l) < f(l')$ if and only if
\[
 (n-l+1)^{b-l+1}(s_l - 1)^{b-l+2} < (n-l'+1)^{b-l'+1}(s_{l'} - 1)^{b-l'+2}.
\]
\end{claim}

\begin{proof}[Proof of the claim]\renewcommand{\qedsymbol}{$\blacksquare$}
 By definition of $f$, the inequality $f(l) < f(l')$ is equivalent to
\[
 ((n-l+1)(s_l - 1))^{b-l+1}\prod_{i=a}^{l-1} s_i  <  ((n-l'+1)(s_{l'} - 1))^{b-l'+1}\prod_{i=a}^{l'-1} s_i. 
\]
Multiplying both sides with $s_1 \cdots s_{a-1}$ and using the definition of the Sylvester sequence proves the claim.
\end{proof}

We determine a subrange of $\{a, \ldots, b\}$ in which $f(l)$ is monotone.

\begin{claim}\label{claim_loc_2}
If $a \le l < b$, $l \ge 2$ and $s_l - e(n-b+2) \ge 0$, where $e$ is the Euler number, then $f(l) < f(l+1)$.
\end{claim}

\begin{proof}[Proof of the claim]\renewcommand{\qedsymbol}{$\blacksquare$}
By Claim~\ref{claim_loc_late_add}, $f(l) < f(l+1)$ is equivalent to
\begin{align}
 (n-l+1)^{b-l+1}(s_l - 1)^{b-l+2} < (n-l)^{b-l}(s_{l+1} - 1)^{b-l+1}. \label{eq:to_show_mon}
\end{align}
Note that $n-l \ge b-l \ge 1$. Using this and $s_{l+1} - 1 = s_l(s_l - 1)$, one sees that \eqref{eq:to_show_mon} is equivalent to
\[
\left(1 + \frac{1}{n-l}\right)^{b-l} (n-l+1) (s_l - 1) < s_l^{b-l+1}.
\]
Since $n-l \ge b-l$,
\[
\left(1 + \frac{1}{n-l}\right)^{b-l} \le \left(1 + \frac{1}{n-l}\right)^{n-l} < e,
\]
where $e$ is the Euler number. Thus, \eqref{eq:to_show_mon} holds if
\[
s_l^{b-l} - e(n-l+1) \ge 0.
\]
We introduce $x := b-l-1$. Then $n - l = n - b + b - l = n - b + x + 1$ and we rewrite the latter inequality as 
\[
s_l^{x+1} - e(x + n - b + 2) \ge 0.
\]
Note that $x \ge 0$.
For a moment, we view $x$ as a variable
ranging in $[0, \infty)$ and develop $s_l^{x+1} - e(n - b + x + 2)$ into its Taylor series at the point $x = 0$:
\[
 s_l^{x+1} - e(x + n - b + 2) = s_l - e(n-b+2) + (s_l \ln s_l - e)x + \sum_{k=2}^{\infty} \frac{s_l (\ln s_l)^k}{k!} x^k.
\]
Obviously, 
\[
\sum_{k=2}^{\infty} \frac{s_l (\ln s_l)^k}{k!}x^k \ge 0 
\]
for $x \ge 0$ as each of the coefficients is positive. The inequality
$s_l \ln s_l - e > 0$ holds whenever $l \ge 2$. Thus, for $l \ge 2$, the expression $s_l^{x+1} - e(n - b + x + 2)$
is monotonously increasing for $ x \in [0, \infty)$. Hence its value
for $x = b-l-1$ is not smaller than the value for $x=0$. Therefore, for \eqref{eq:to_show_mon} to hold it suffices to have
\begin{align}
s_l - e(n-b+2) \ge 0. \label{eq:to_show_ref}
\end{align}
\end{proof}

{\em Assertion (b).} 
Here, we have $1 \le a < b$, i.e. $b \ge 2$ and therefore we also have $n-b+2 \le n$. 
Let $a \le l < b$.
In view of Proposition~\ref{lem_syl_seq}(c), we have  $s_l \ge 2^{2^{l-2}}$.
Assume now that
\[
l > 2 + \log_2 \log_2 (e n).
\]
Combining this with our previous observations yields
\[
 s_l \ge 2^{2^{l-2}} \ge e n \ge e (n - b + 2).
\]
By Claim~\ref{claim_loc_2}, this implies that $f(l) < f(l+1)$ and thus, 
for $2 + \log_2 \log_2 (en) < l < b$, the value $f(l)$ is not the maximum.
This proves (b).

% In view of this assumption, it suffices to have
% \begin{align}
%   s_l - en \ge 2^{2^{l-2}} - en > 0, \label{eq:second_loc_claim}
% \end{align}
% for $f(l) < f(l+1)$ to hold.
% In view of Proposition~\ref{lem_syl_seq}(c), inequality \eqref{eq:to_show_ref} holds if
% \begin{align*}
%  2^{2^{l-2}} \ge e(n-b+2). 
% \end{align*}
% The observation that \eqref{eq:second_loc_claim} holds if
% \[
% l > 2 + \log_2 \log_2 (e n)
% \]
% completes the proof of the claim.
% \end{proof}
% 
% Assertion (b) then follows directly from Claim~\ref{claim_loc_2}.
%  combined with Lemma~\ref{i_and_k}.

{\em Assertion (c).}
We assume $b = n$ and $a < b$. Let $a \le l < b$. If $l \ge 3$,
we have
$s_l - e(n-b+2) = s_l - 2e > 0$ and hence by Claim~\ref{claim_loc_2}, we know that $\xquer(l)$ is not an optimal solution
of $\IK^n(x_a \cdots x_b)$.
Therefore, only $\xquer(1)$, $\xquer(2)$ and $\xquer(n)$ remain as possible optimal solutions.
We show directly that
$\xquer(n)$ is a better solution than $\xquer(1)$ and $\xquer(2)$, respectively.
By Claim~\ref{claim_loc_late_add}, we need to show
\[
 (n + 1 - l)^{n+1-l}(s_l - 1)^{n-l+2} < (s_n - 1)^2
\]
for $n \ge 4$ and $l \in \{1,2\}$. We proceed by induction on $n$. One verifies directly that the inequality holds for
$n = 4$ and $l \in \{1,2\}$. Let us now show that the inequality remains valid when moving from $n$
to $n+1$. By induction hypothesis,
\[
 (s_{n+1} - 1)^2 = s_n^2(s_n - 1)^2 >  (n + 1 - l)^{2 (n + 1 - l)}(s_l - 1)^{2(n-l+2)}.
\]
To complete the induction, we need to show
\begin{align*}
(n + 1 - l)^{2 (n + 1 - l)}(s_l - 1)^{2(n-l+2)} \ge (n + 2 - l)^{n+2-l}(s_l - 1)^{n-l+3}.
\end{align*}
This is equivalent to
\begin{align}
 (n + 1 - l)^{n-l}(s_l - 1)^{n-l+1} \ge \left(1 + \frac{1}{n + 1 - l}\right)^{n+2-l}. \label{eq:to_complete_ind}
\end{align}
The right-hand side of \eqref{eq:to_complete_ind} is at most $e \left(1 + \frac{1}{n + 1 - l}\right)$. As $l \le 2$ and $n \ge 4$,
this is at most $\frac{4e}{3}$. Again employing $l \le 2$ and $n \ge 4$, the left-hand side of \eqref{eq:to_complete_ind}, is at least $(n+1-l)^{n-l} \ge 9 \ge \frac{4e}{3}$.
This completes the induction.
Lemma~\ref{i_and_k}(a) now yields assertion (c).

{\em Assertions (d) and (e).}
We assume $b=n-1$ and $a < b$. Let $a \le l < b$.
If $l \ge 4$, we have
$s_l - e(n - b + 2) = s_l - 3e > 0$ and hence by Claim~\ref{claim_loc_2}, we have that $\xquer(l)$ is not an optimal solution of $\IK^n(x_a \cdots x_b)$.
Thus, it remains to compare $\xquer(l)$ and $\xquer(b) = \xquer(n-1)$ directly for $l \in \{1,2,3\}$. Note that if $a > 1$,
we can exclude the elements $\xquer(l)$ with $l < a$.
By Claim~\ref{claim_loc_late_add}, we need to show
\[
(n + 1 - l)^{n-l+1}(s_l - 1)^{n-l+2} < 2 (s_{n-1} - 1)^2 \qquad \text{for} \; l \in \{1,2,3\}.
\]
Using an inductive argument analogous to the one used for $b=n$, we obtain that this inequality is valid for every $n \ge 5$ and every $l \in \{1,2,3\}$.
If $n=4$ and $l=1$, one can verify directly that the inequality holds. If $n=4$ and $l \in \{2,3\}$, a direct computation shows
that $\xquer(2)$ and $\xquer(3) = \xquer(n-1)$ yield the same values for each of the problems $\IK^4(x_1 x_2 x_3)$ and $\IK^4(x_2 x_3)$.
This proves assertion (d). Lastly, we prove (e) and consider the problem $\IK^n(x_1 \cdots x_{n-1})$ with $n \ge 4$. By Lemma~\ref{i_and_k}(c), 
all optimal solutions of this problem are in $\Y$. Hence by our previous arguments, assertion (e) follows.
\end{proof}

\subsection*{Problem $\IK^n(x_a \cdots x_b)$ for $a=b$}
\label{sec:equality}

One of the well-known topics in number theory (see, e.g., \cite{MR1520110, MR0043117}) concerns the so-called unit partitions, i.e., representations of $1$ in the form
$1 = \frac{1}{u_1} + \ldots + \frac{1}{u_n}$ with $n \in \N$ and $u_1, \ldots, u_n \in \N$.
By symmetry reasons, there is no loss of generality in assuming the sequence $u_1, \ldots, u_n$ to satisfy $u_1 \le \ldots \le u_n$. 
With this assumption the vector $x = (\frac{1}{u_1}, \ldots ,\frac{1}{u_n})$ belongs to $\Chi^n$. To see that all product-sum inequalities are satisfied,  
observe that for every $j \in \{1, \ldots, n-1\}$ we have
\begin{align*}
\frac{1}{u_{j+1}} + \ldots + \frac{1}{u_n} = 1 - \sum_{i=1}^j \frac{1}{u_i}. 
\end{align*}
Writing the expression on the right hand side as a fraction with denominator $u_1 \cdots u_j$, one can see that the numerator is at least $1$,
hence the fraction is at least $\frac{1}{u_1 \cdots u_j}$. Therefore,
\[
 \setcond{\left( \frac{1}{u_1}, \ldots, \frac{1}{u_n}\right)}{u_1, \ldots, u_n \in \N, \; \frac{1}{u_1} + \ldots + \frac{1}{u_n} =1, \; u_1 \le \ldots \le u_n} \subseteq \Chi^n.
\]
Previous work on this subject has established a link between unit partitions and the Sylvester sequence; see \cite{MR1520110} and \cite{MR0043117}.
A result of Soundararajan from (\rescite{arxiv:math/0502247v1}{(1)}) provides an elegant way of establishing a sharp upper bound on $u_j$ for each $j \in \{1, \ldots, n\}$.

Here, we use the ideas from \cite{arxiv:math/0502247v1} to show a generalized result. 
From Theorem~\ref{thm_location}(a), we already know that $\xquer(a)$ is an optimal solution of $\IK^n(x_a)$ and the only optimal solution contained in $\Y$. 
However, Theorem~\ref{thm_location}(a) does not state anything about uniqueness of this solution within the whole set $\Chi^n$. 
Following Soundararajan's approach closely, we can show that $\xquer(a)$ is indeed the unique optimal solution within $\Chi^n$.
The proof strategy from \cite{arxiv:math/0502247v1} can be adapted by dropping the assumption
of unit fractions and replacing it by the less restrictive assumption of product-sum inequalities. 
This will lead to optimal solutions of $\IK^n(x_a)$ for $a \in \{1, \ldots, n\}$
over the set $\Chi^n$.
First, we need the following proposition from \cite{arxiv:math/0502247v1}. It is based on Muirhead's inequality. Muirhead's inequality can be found, for example, in \rescite{MR0046395}{Theorem 45}.

\begin{proposition}\label{prop_sound_use}\thmtitle{\rescite{arxiv:math/0502247v1}{Proposition}}
Let $n \in \N$. Let
$g_1 \geq g_2 \geq \ldots \geq g_n > 0$ and 
$h_1 \geq h_2 \geq \ldots \geq h_n > 0$ be such that 
$h_1 \cdots h_j \leq g_1 \cdots g_j$ for every 
$j \in \{1, \ldots, n\}$. Then 
\begin{equation*}
h_1 + \ldots + h_n \le g_1 + \ldots + g_n, 
\end{equation*} 
with equality if and only if $g_i = h_i$ for every $i \in \{1, \ldots, n\}$.
\end{proposition}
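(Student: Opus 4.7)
The plan is to reduce the statement to Muirhead's inequality after taking logarithms; the key observation is that specializing Muirhead at the point $(y, 1, \ldots, 1)$ turns the symmetric-function inequality into a comparison between the ordinary sums $\sum y^{b_i}$ and $\sum y^{a_i}$, which produces the claim when $y = e$. The main obstacle is that Muirhead is formulated for exponent sequences with equal total sum (i.e., standard majorization), whereas the hypothesis here only gives weak majorization; this is handled by a simple normalization step.

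First I would normalize as follows. If $\prod_{i=1}^n h_i < \prod_{i=1}^n g_i$, replace $g_n$ by $g_n' := g_n \cdot (\prod h_i)/(\prod g_i)$. One checks that $0 < g_n' \le g_n$, so monotonicity of the $g_i$ is preserved, the partial-product hypotheses remain valid (with equality at $j = n$), and $\sum g_i' \le \sum g_i$. Hence it suffices to prove both the inequality and the equality characterization under the additional assumption $\prod h_i = \prod g_i$. Then I would set $a_i := \log h_i$ and $b_i := \log g_i$: both sequences are nonincreasing, with $\sum_{i \le k} a_i \le \sum_{i \le k} b_i$ for every $k$ and equality at $k = n$, which is precisely the statement that $(b_i)$ majorizes $(a_i)$. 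A common additive shift of $(a_i)$ and $(b_i)$ corresponds to multiplying all $h_i$ and $g_i$ by the same positive constant and leaves the target sum inequality invariant, so I may further assume $a_i, b_i \ge 0$ and invoke standard Muirhead (\rescite{MR0046395}{Theorem 45}).

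Evaluating Muirhead at the positive vector $(y, 1, \ldots, 1)$ with $y > 0$ makes the symmetrized monomial sum collapse to $(n-1)! \sum_i y^{b_i}$ (and analogously for $(a_i)$), giving $\sum_i y^{b_i} \ge \sum_i y^{a_i}$. Setting $y = e$ yields the desired $\sum g_i \ge \sum h_i$. For the equality case, the vector $(e, 1, \ldots, 1)$ does not have all its coordinates equal, so the uniqueness clause of Muirhead forces $(a_i) = (b_i)$ as sequences, i.e., $h_i = g_i$ for every $i$; in particular $\prod h_i = \prod g_i$, so the normalization above was trivial and the original sequences already coincide termwise.
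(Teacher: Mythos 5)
Your proof is correct. The paper does not reprove this proposition itself; it cites Soundararajan's preprint and notes that the argument rests on Muirhead's inequality (Hardy--Littlewood--P\'olya, Theorem~45), which is exactly the tool you invoke after the normalization and logarithm steps, so your route is the one the paper points to, with the specialization at $(y,1,\ldots,1)$ being a clean way to turn Muirhead into the required sum comparison and to extract the equality case.
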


%Now, we can prove our main ingredient in the proof of Theorem~\ref{main_bc_bound}. 
% \footnote{The proof of \ref{prop_sound_rev} is an adaptation of
% the proof presented in \cite{arxiv:math/0502247v1}. Since \cite{arxiv:math/0502247v1} is not intended for publication, we repeat the details
% of the proof here.}

\begin{lemma}\label{prop_sound_rev}
Let $x_1, \ldots, x_k$ ($k \in \N$) be a sequence of positive real numbers satisfying the following conditions:
\begin{empheq}{align}
& x_1 + \ldots + x_k < 1, &\; \label{k_sum_less}\\
& x_1 \ge \ldots \ge x_k, &\;\label{k_ordered}\\
& x_1 \cdots x_j \le 1 - \sum_{i=1}^j x_i \quad \forall \; j \in \{1, \ldots, k\}. &\; \label{k_ps_ineq}
\end{empheq}
Then
\begin{align}
x_1 + \ldots + x_k \le \frac{1}{s_1} + \ldots + \frac{1}{s_k}, \label{sound_ineq_ts}
\end{align}
with equality being attained if and only if $x_i = 1/s_i$ for every $i \in \{1, \ldots, k\}$.
\end{lemma}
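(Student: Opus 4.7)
My plan is to adapt the argument of Soundararajan \cite{arxiv:math/0502247v1} by induction on $k$, using Proposition \ref{prop_sound_use} as the main analytical tool. The base case $k = 1$ is immediate: the single hypothesis \eqref{k_ps_ineq} at $j = 1$ reads $x_1 \le 1 - x_1$, so $x_1 \le 1/2 = 1/s_1$, with equality iff $x_1 = 1/s_1$.

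For the inductive step, observe first that $(x_1, \ldots, x_{k-1})$ satisfies the hypotheses of the lemma at level $k - 1$: the first $k - 1$ product-sum inequalities transfer verbatim, the ordering is inherited, and the strict inequality $\sum_{i<k} x_i < 1$ follows from the $(k-1)$-th product-sum inequality combined with positivity of the $x_i$. Writing $A := \sum_{i<k} x_i$ and $P := x_1 \cdots x_{k-1}$, the induction hypothesis gives $A \le 1 - 1/(s_k - 1)$ with the equality case characterized, and the $k$-th product-sum inequality rearranges to $x_k(1+P) \le 1-A$. Hence
\begin{equation*}
   \sum_{i=1}^k x_i \le A + \frac{1 - A}{1 + P} = \frac{1 + AP}{1 + P}.
\end{equation*}
A direct computation using the identity $s_{k+1} - 1 = s_k(s_k - 1)$ from Proposition \ref{lem_syl_seq}(a) shows that the required bound $\frac{1 + AP}{1 + P} \le 1 - \frac{1}{s_{k+1} - 1}$ is equivalent to the single inequality $P \cdot \bigl[(1 - A)(s_{k+1} - 1) - 1\bigr] \ge 1$, which at the Sylvester point $x_j = 1/s_j$ holds with equality.

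The main obstacle will be establishing this last inequality in general: one needs a lower bound on $P$ that dovetails with the upper bound on $A$ supplied by the induction hypothesis. This is the step at which Proposition \ref{prop_sound_use} enters decisively, applied to the decreasing sequences $(1/s_j)_{j<k}$ and $(x_j)_{j<k}$ (or a suitable refinement thereof) to convert control of partial sums into control of partial products. The ordering \eqref{k_ordered} is essential, since it precludes the degenerate scenario in which $P$ is small while $A$ lies close to its upper bound. For the equality statement, I would trace the equality cases through the induction hypothesis (forcing $x_j = 1/s_j$ for $j < k$) and through the Muirhead-type step (forcing $x_k = 1/s_k$); the Sylvester identity $\frac{1}{s_1} + \cdots + \frac{1}{s_k} + \frac{1}{s_{k+1}-1} = 1$ of Proposition \ref{lem_syl_seq}(b) then confirms that $(1/s_1, \ldots, 1/s_k)$ indeed saturates \eqref{sound_ineq_ts}, completing the characterization.
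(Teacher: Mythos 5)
Your reduction reaches a genuine dead end. The intermediate bound you derive, $\sum_{i=1}^{k} x_i \le \frac{1 + AP}{1+P}$ with $A := \sum_{i<k}x_i$ and $P := \prod_{i<k}x_i$, is not in general at most $1 - \frac{1}{s_{k+1}-1}$, so the inequality $P\bigl[(1-A)(s_{k+1}-1)-1\bigr] \ge 1$ that you need is simply false under the hypotheses of the lemma. The induction hypothesis controls the sum $A$, but gives no lower bound on the product $P$, and $P$ can be arbitrarily small while $A$ stays bounded away from $1$. Concretely, for $k=3$ take $x_1 = \tfrac12$, $x_2 = x_3 = \varepsilon$ with $\varepsilon$ small; all of \eqref{k_sum_less}--\eqref{k_ps_ineq} hold, yet $A = \tfrac12 + \varepsilon$ and $P = \tfrac{\varepsilon}{2}$, so $\frac{1+AP}{1+P} \to 1$ as $\varepsilon \to 0$, which exceeds $\tfrac{1}{s_1} + \tfrac{1}{s_2} + \tfrac{1}{s_3} = 1 - \tfrac{1}{42}$. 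In this regime the chain $A + x_k \le A + \tfrac{1-A}{1+P}$ is simply too lossy (the true sum is far below $\tfrac{1+AP}{1+P}$), and since such configurations are feasible, no ``dovetailing'' lower bound on $P$ can exist. Proposition \ref{prop_sound_use} also runs in the wrong direction for your purpose: it passes from control of partial products to control of sums, not from sums to products.

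The paper's proof is structured differently and avoids this trap. It first handles the case $x_1 \cdots x_k > \frac{1}{s_1 \cdots s_k}$ directly: by \eqref{k_ps_ineq} at $j = k$ and the Sylvester identity one gets $\sum x_i \le 1 - x_1\cdots x_k < 1 - \frac{1}{s_1\cdots s_k} = \sum \frac{1}{s_i}$ with strict inequality. In the complementary case one chooses the largest index $l$ with $x_l \cdots x_k \le \frac{1}{s_l \cdots s_k}$; maximality of $l$ then forces $x_l \cdots x_j \le \frac{1}{s_l \cdots s_j}$ for every $j \in \{l,\ldots,k\}$, so Proposition \ref{prop_sound_use} applies to the whole tail $(x_l,\ldots,x_k)$ against $(\tfrac{1}{s_l},\ldots,\tfrac{1}{s_k})$ and yields $\sum_{i\ge l} x_i \le \sum_{i\ge l} \tfrac{1}{s_i}$, while induction bounds $\sum_{i<l} x_i$. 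The crucial difference is that the split index $l$ is chosen adaptively so that the Muirhead-type lemma governs the entire tail, rather than isolating $x_k$ and trying to recover the loss from the induction hypothesis alone.
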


\begin{proof} 
The proof of Lemma~\ref{prop_sound_rev} is an adaptation of
the proof presented in \cite{arxiv:math/0502247v1}. Since \cite{arxiv:math/0502247v1} is not intended for publication, we repeat the details
of the proof here.

We argue by induction on $k$. For $k = 1$, inequality
\eqref{k_ps_ineq} yields $x_1 \le 1/2 = 1/s_1$. 
Let now $k \ge 2$ and assume that the statement holds for all sequences with at most $k-1$ elements.
First, we show that \eqref{sound_ineq_ts} holds in the case that 
\begin{align} 
x_1 \cdots x_k > \frac{1}{s_1 \cdots s_k}. \label{eq:sound_aux_label}
\end{align}
In view of \eqref{k_ps_ineq}, we obtain
\begin{align*} 
\frac{1}{s_1} + \ldots + \frac{1}{s_k} = 1 - \frac{1}{s_{k+1} - 1} = 1 - \frac{1}{s_1 \cdots s_k} > 1 - x_1 \cdots x_k \ge \sum_{i=1}^k x_i.
\end{align*}
Hence \eqref{sound_ineq_ts} holds and is strict. We switch to the case $x_1 \cdots x_k \le \frac{1}{s_1 \cdots s_k}$.
Thus, the inequality
\begin{align}
x_l \cdots x_k \le \frac{1}{s_l} \cdots \frac{1}{s_k} \label{eq:ind_step}
\end{align}
holds for $l=1$. We fix $l \in \{1, \ldots, k\}$ to be the maximal index for which \eqref{eq:ind_step} holds. Since $l-1 < k$, the induction hypothesis yields
\begin{align}
\sum_{i=1}^{l-1} x_i \le \sum_{i=1}^{l-1} \frac{1}{s_i}. \label{eq:main1_first}
\end{align}
The choice of $l$ yields a series of inequalities as follows:
\begin{align*}
x_l &\le \frac{1}{s_l}, \\
x_l x_{l+1} &\le \frac{1}{s_l s_{l+1}},\\
&\;\; \vdots \\
x_l \cdots x_k &\le \frac{1}{s_l \cdots s_k}.
\end{align*}
Thus, $x_l, \ldots, x_k$ and $1/s_l, \ldots, 1/s_k$ fulfil the settings of Proposition~\ref{prop_sound_use}. Hence, we have
\begin{align}
\sum_{i=l}^k x_i \le \sum_{i=l}^k \frac{1}{s_i}. \label{eq:main1_second}
\end{align}
Together with \eqref{eq:main1_first}, this proves \eqref{sound_ineq_ts}.

Now, we characterize the equality case. Observe that, if $x_i = 1/s_i$ for every $i \in \{1, \ldots, k\}$, the inequality
\eqref{sound_ineq_ts} is trivially fulfilled with equality. Let us now assume we have equality in \eqref{sound_ineq_ts} and show that this 
implies $x_i = 1/s_i$ for every $i \in \{1, \ldots, k\}$. Above, we have shown that \eqref{eq:sound_aux_label} implies
that \eqref{sound_ineq_ts} is strict. So, under our assumptions, \eqref{eq:sound_aux_label} cannot hold.
We can hence again fix the maximal $l$ such that \eqref{eq:ind_step} holds. Then by the induction hypothesis, we have $x_i = 1/s_i$ for 
every $i \in \{1, \ldots, l-1\}$. Therefore, \eqref{eq:main1_first} holds with equality. This implies that \eqref{eq:main1_second}
also has to be fulfilled with equality. 
By Proposition~\ref{prop_sound_use}, this is the case if and only if $x_i = 1/s_i$ for every $i \in \{l, \ldots, k\}$.
% By the inductive argument used before, \eqref{eq:main1_first} is strict unless $x_i = 1/s_i$ for $i \in \{1, \ldots, l-1\}$. Furthermore,
% Proposition~\ref{prop_sound_use} yields that \eqref{eq:main1_second} is satisfied with equality if and only if $x_i = 1/s_i$ for $i \in \{l, \ldots, k\}$.
% Combining the two observations completes the proof.
\end{proof}

\begin{theorem}\label{thm_single_low}
Let $n \in \N$, $i \in \{1, \ldots, n\}$ and $x \in \Chi^n$.
Then
\begin{align*}
x_i \ge \frac{1}{(n-i+1)(s_i - 1)},
\end{align*}
with equality if and only if $x = \xquer(i)$.
\end{theorem}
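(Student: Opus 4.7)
The plan is to reduce the bound on $x_i$ to the Soundararajan-style bound from Lemma~\ref{prop_sound_rev} applied to the prefix $x_1,\dots,x_{i-1}$, and then control $x_i$ from below by an averaging argument on the tail $x_i,\dots,x_n$.

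First I would dispose of the trivial case $i=1$ directly: since $x_1 \ge x_j$ for all $j$, one gets $x_1 \ge (x_1+\dots+x_n)/n = 1/n = 1/((n-1+1)(s_1-1))$, with equality if and only if all coordinates coincide, which is exactly $\xquer(1)$.

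For $i \ge 2$, I would verify that the prefix $x_1,\dots,x_{i-1}$ satisfies the hypotheses of Lemma~\ref{prop_sound_rev} with $k=i-1$: the ordering condition \eqref{k_ordered} is immediate from \eqref{cond_ordered}; the strict inequality \eqref{k_sum_less} follows from $x_i>0$ (Lemma~\ref{lem_simult_equality}(a)) together with $\sum_{j=1}^n x_j=1$; and each product-sum condition \eqref{k_ps_ineq} is a rewriting of \eqref{cond_ps_ineq}. Lemma~\ref{prop_sound_rev} therefore yields
\[
x_1+\dots+x_{i-1}\ \le\ \frac{1}{s_1}+\dots+\frac{1}{s_{i-1}},
\]
with equality if and only if $x_j=1/s_j$ for every $j\in\{1,\dots,i-1\}$. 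Using Proposition~\ref{lem_syl_seq}(b), this is equivalent to
\[
x_i+\dots+x_n\ \ge\ \frac{1}{s_i-1}.
\]
Now, from the ordering $x_i\ge x_{i+1}\ge\dots\ge x_n$ I would bound
\[
(n-i+1)\,x_i\ \ge\ x_i+\dots+x_n\ \ge\ \frac{1}{s_i-1},
\]
which gives the desired inequality $x_i\ge \dfrac{1}{(n-i+1)(s_i-1)}$.

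For the equality characterization, equality in the averaging step forces $x_i=x_{i+1}=\dots=x_n$, while equality in the tail bound forces $x_j=1/s_j$ for $j=1,\dots,i-1$ by the uniqueness clause of Lemma~\ref{prop_sound_rev}. Combining both, the common value of $x_i,\dots,x_n$ is determined by \eqref{cond_sum_one} and Proposition~\ref{lem_syl_seq}(b) to equal $1/((n-i+1)(s_i-1))$, so $x=\xquer(i)$. Conversely one checks directly that $\xquer(i)$ attains equality. No step here is really an obstacle, since the two main engines (Lemma~\ref{prop_sound_rev} and the Sylvester telescoping identity) have already been established; the only thing to be careful about is to match the degenerate cases $i=1$ and $i=n$ correctly with the indexing conventions for empty sums and for $\xquer(n)$.
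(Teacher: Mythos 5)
Your proof is correct and takes essentially the same route as the paper: both apply Lemma~\ref{prop_sound_rev} to the prefix $x_1,\dots,x_{i-1}$ to get $x_i+\cdots+x_n\ge 1/(s_i-1)$, combine with the averaging bound $(n-i+1)\,x_i\ge x_i+\cdots+x_n$ from the ordering, and then derive the equality characterization by forcing equality in both steps and invoking the uniqueness clause of Lemma~\ref{prop_sound_rev}. The only cosmetic difference is which degenerate index you split off ($i=1$ in your write-up, $i=n$ in the paper).
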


\begin{proof}
% Let $x \in \Chi^n$ and $i \in \{1, \ldots, n\}$. 
We have
\begin{align}
 x_i \ge \frac{1}{n-i+1}\left(1 - \sum_{j=1}^{i-1} x_j\right) \ge \frac{1}{n-i+1}\left( 1 - \sum_{j=1}^{i-1} \frac{1}{s_j} \right) = \frac{1}{(n-i+1)(s_i-1)}. \label{eq:bound_for_one}
\end{align}
The first inequality in \eqref{eq:bound_for_one} follows from $x_1 + \ldots + x_n = 1$ and $x_i \ge x_{i+1} \ge \ldots \ge x_n$ and the second one is a consequence of
Lemma~\ref{prop_sound_rev}. Now, let us prove the characterization of the equality case. Clearly, for $x = \xquer(i)$, one
has $x_i = \frac{1}{(n-i+1)(s_i - 1)}$. On the other hand, if $x_i = \frac{1}{(n-i+1)(s_i - 1)}$, both inequalities in \eqref{eq:bound_for_one}
are attained with equality.
Hence, we have $1 - \sum_{j=1}^{i-1} x_j = 1/(s_i - 1)$, 
which by Lemma~\ref{prop_sound_rev} holds if and only if $x_j = 1/s_j$ for $j \in \{1, \ldots, i-1\}$.
Furthermore, $x_1 + \ldots + x_n = 1$ yields $x_i + \ldots + x_n = 1/(s_i - 1)$. Since we have $x_i \ge x_{i+1} \ge \ldots \ge x_n$ and $x_i = \frac{1}{(n-i+1)(s_i - 1)}$,
this implies that we have in fact 
$x_i = x_{i+1} = \ldots = x_n$. This yields $x = \xquer(i)$.
\end{proof}

\begin{remark}\label{ik_remark}
One can see that Theorem~\ref{thm_single_low} solves $\IK^n(x_i)$ for every $i \in \{1, \ldots, n\}$. 
In \cite{IK_original}, there is an alternative approach to solve $\IK^n(x_n)$. First, Izhboldin and Kurliandchik
determine the unique optimal solution of  $\IK^n(x_1 \cdots x_n)$ and
then show that this is also the unique optimal solution of $\IK^n(x_n)$ in the following way. Let $y$ be the
optimal solution of $\IK^n(x_1 \cdots x_n)$. We know from the proof of Lemma~\ref{i_and_k} that $y$ attains all
the product-sum inequalities with equality. Let $x$ be an arbitrary
element of $\Chi^n$. Thus, taking into account $\PS(n-1)$ for $x$, we have
\begin{align}
x_n^2 \ge x_1 \cdots x_{n-1} x_n \ge  y_1 \cdots y_n = y_n^2.  \label{ik_rem}
\end{align}
Thus, $x_n \ge y_n$, which shows that $y$ is also an optimal solution of
$\IK^n(x_n)$. Furthermore, $y$ is the unique optimal solution of
$\IK^n(x_n)$, because if $x \in \Chi^n$ minimizes $x_n$, then all
inequalities in \eqref{ik_rem} are attained with equality. Consequently, $x$ is an
optimal solution of $\IK^n(x_1 \ldots x_n)$ and by this coincides with
$y$.
\end{remark}

\section{Proofs of results for $\Sd{1}$ (Theorems~\ref{main_bc_bound} and \ref{main_volume})}\label{sec:simplices_proofs}

% In this section, we give the proofs of Theorems~\ref{main_bc_bound} and \ref{main_volume} and Corollary~\ref{applying_blichfeldt}\footnote{this might need reformulation}. The line of argumentation is as follows:
% First, we show that the inequalities in Theorem~\ref{main_bc_bound} hold and characterize the simplices
% for which the minimal value of $\beta_{d+1}$ is attained. These results
% are needed in the proof of Theorem~\ref{main_volume}(b), while (a) is a corollary of the results in Section~\ref{sec:auxiliary}. Corollary~\ref{applying_blichfeldt}
% then follows immediately. 
% % The proofs of Theorems~\ref{thm_geom_int} and \ref{thm_geom_complex_int} are a consequence of the more general results obtained in Section~\ref{sec:auxiliary}.  
% In each of the proofs in this section, the idea is to
% link the quantities in question to a product of (a subset of) the barycentric coordinates, to which we can then apply the results of Section~\ref{sec:auxiliary}.

In this section, we apply the results from Section~\ref{sec:auxiliary} to the $(d+1)$-tuple of barycentric coordinates associated with the interior
integral point of a simplex in $\Sd{1}$.
Observe that if we assume the barycentric coordinates $\beta_1, \ldots, \beta_{d+1}$ of this point
to be ordered decreasingly, the tuple $(\beta_1, \ldots, \beta_{d+1})$ fulfils conditions
\eqref{cond_sum_one} and \eqref{cond_ordered}. Because of Theorem~\ref{thm_sum_prod_bc}, it also fulfils \eqref{cond_ps_ineq} and hence we have 
$(\beta_1, \ldots, \beta_{d+1}) \in \Chi^{d+1}$.

The following simple proposition will be used for the characterization of the equality cases in Theorems~\ref{main_bc_bound} and \ref{main_volume}.
It will also be used in Section~\ref{sec:diameter}, where we prove Theorems~\ref{general_ld_bound} and \ref{lpf_ld_bound}.

\begin{proposition}\label{div_property_sylv}
 Let $a_1, \ldots, a_k$ ($k \in \N$) be pairwise relatively prime integers. Let $m_1, \ldots, m_k$ be integers such that 
\[
 \frac{m_1}{a_1} + \ldots + \frac{m_k}{a_k} \in \Z.
\]
Then $a_i$ divides $m_i$ for every $i \in \{1, \ldots, k\}$.
\end{proposition}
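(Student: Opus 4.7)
The plan is a standard clearing-of-denominators argument followed by a reduction modulo each $a_i$, using pairwise coprimality to isolate the $i$-th term.

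First I would set $A := a_1 \cdots a_k$ and $n := \frac{m_1}{a_1} + \cdots + \frac{m_k}{a_k}$, which by hypothesis lies in $\Z$. Multiplying through by $A$ gives the integer identity
\[
  n A \;=\; \sum_{j=1}^{k} m_j \frac{A}{a_j}.
\]
Now I would fix an arbitrary index $i \in \{1,\ldots,k\}$ and reduce this identity modulo $a_i$. The left-hand side is divisible by $a_i$ since $a_i \mid A$. For each $j \neq i$, the integer $A/a_j = \prod_{l \neq j} a_l$ still contains $a_i$ as a factor, so $m_j (A/a_j) \equiv 0 \pmod{a_i}$. Hence
\[
  m_i \frac{A}{a_i} \;\equiv\; 0 \pmod{a_i}.
\]

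The final step is to observe that $A/a_i = \prod_{l \neq i} a_l$ is coprime to $a_i$ by the pairwise coprimality assumption, and therefore invertible modulo $a_i$. Multiplying both sides of the congruence by an inverse of $A/a_i$ modulo $a_i$ yields $m_i \equiv 0 \pmod{a_i}$, i.e.\ $a_i \mid m_i$. Since $i$ was arbitrary, the proposition follows. There is no real obstacle here; the only point requiring care is verifying that $A/a_i$ is coprime to $a_i$, which is exactly the place where the pairwise (rather than merely pairwise-with-$a_i$) coprimality assumption is used.
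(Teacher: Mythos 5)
Your proposal is correct and follows essentially the same clearing-of-denominators argument as the paper; the paper multiplies by $a_2 \cdots a_k$ (after reducing to the case $i=1$ by symmetry) and concludes directly from $a_1 \mid m_1 a_2 \cdots a_k$ and coprimality, whereas you multiply by the full product $A$ and reduce modulo $a_i$, but these are trivially equivalent manipulations.
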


\begin{proof}
By symmetry of the statement, it suffices to prove the assertion for $i=1$. 
Multiplying $\frac{m_1}{a_1} + \ldots + \frac{m_k}{a_k}$ by $a_2 \cdots a_k$, we deduce $\frac{m_1 a_2 \cdots a_k}{a_1} \in \Z$. By the assumptions, 
$a_1$ is relatively prime to $a_2 \cdots a_k$. Hence $\frac{m_1}{a_1} \in \Z$ and the assertion follows.
\end{proof}

It follows directly from the definition that the elements of the Sylvester sequence are pairwise relatively prime. Hence, we can apply Proposition~\ref{div_property_sylv}
to the elements of the Sylvester sequence.

\begin{proof}[Proof of Theorem~\ref{main_bc_bound}]
The bound in \eqref{eq:bc_bound_formula} and assertion (b) follow immediately from Theorem~\ref{thm_single_low} by setting $n = d+1$ and 
$x_1 = \beta_1, \ldots, x_n = \beta_{n}$. Assertion (a) can be checked directly from the fact that $\intr{T^d_{1,i}} \cap \Z^d = \{(1, \ldots, 1)\}$
and by noticing that $\xquer(i) \in \mathcal{Y}^{d+1}$ is the $(d+1)$-tuple of barycentric coordinates of $(1, \ldots, 1)$ with respect to $T^d_{1,i}$.

We will now prove assertion (c). From assertion (a) we have that for $S \cong T^d_{1,d+1}$ and $i = d+1$, equality is attained in \eqref{eq:bc_bound_formula}.
To show the reverse implication, consider an arbitrary $S \in \Sd{1}$ such that its interior integral point $p$ has the barycentric coordinate 
$\beta_{d+1} = \frac{1}{s_{d+1}-1}$.
Assertion (b) implies
\[
 (\beta_1, \ldots, \beta_{d+1}) = \left(\frac{1}{s_1}, \ldots, \frac{1}{s_d}, \frac{1}{s_{d+1}-1}\right).
\]
Without loss of generality, we can assume that the vertex of $S$ associated with the smallest barycentric coordinate is $o$. For the remaining barycentric coordinates,
let $p_i$ with $i \in \{1, \ldots, d\}$ be the vertices of $S$ such that $p_i$ corresponds to the coordinate $1/s_i$, $i \in \{1, \ldots, d\}$. Then
\[
 p := \frac{p_1}{s_1} + \ldots + \frac{p_d}{s_d}
\]
is the single interior integral point of $S$. By Proposition~\ref{div_property_sylv}, we get $p_i/s_i \in \Z^d$ for $i \in \{1, \ldots, d\}$.
Let $\Lambda := \frac{p_1}{s_1}\Z + \ldots + \frac{p_d}{s_d}\Z$ be the lattice induced by those points. By construction, $\Lambda$ is a rank $d$ sublattice of $\Z^d$.
Recall that for a rank $d$ lattice with basis $b_1, \ldots, b_d \in \Z^d$, the determinant of this lattice can be expressed as the number of lattice points in
$(0,1]b_1 + \ldots + (0,1]b_d$ (see, e.g., \rescite{MR1940576}{VII, 2.6}). Using this and the fact that
\begin{align*}
\{p\} & \subseteq \left((0,1]\frac{p_1}{s_1} + \ldots + (0,1]\frac{p_d}{s_d}\right) \cap \Lambda \\
& \subseteq \intr{S} \cap \Lambda \\ & \subseteq \intr{S} \cap \Z^d = \{p\},
\end{align*}
we have that $\Lambda$ is a sublattice of $\Z^d$ with determinant one. Hence $\Lambda = \Z^d$ and, moreover, 
the linear mapping given by $\frac{p_i}{s_i} \mapsto e_i$ preserves $\Z^d$. Consequently, this mapping is a unimodular transformation. Thus
$S \cong \conv(\{o, s_1 e_1, \ldots, s_d e_d\}) = T^d_{1,d+1}$.
\end{proof}

To prove Theorem~\ref{main_volume}(b), we will show that equality in \eqref{eq:face_vol_bound} for $l \in \{1,d\}$ is attained
if and only if the unique interior integral point of the simplex $S$ has a specific tuple of barycentric coordinates with respect to $S$.
The following lemma gives a characterization of the simplices in $\Sd{1}$ for which this is the case.
The same tuple of barycentric coordinates will also be of interest when characterizing the equality case in Theorem~\ref{thm_dual_face_volumes}.

\begin{lemma}\label{lem_hyperplane_intersection}
 Let $S \in \Sd{1}$. Let the barycentric coordinates $\beta_1, \ldots, \beta_{d+1}$ of the interior integral
point with respect to $S$ be
\[
  (\beta_1, \ldots, \beta_{d+1}) = \left(\frac{1}{s_1}, \ldots, \frac{1}{s_{d-1}}, \frac{1}{2(s_d - 1)}, \frac{1}{2(s_d - 1)}\right).
\]
Then $S \cong \conv \left(T^{d-1}_{1,d} \times \{0\} \cup \{\pm(a, h)\} \right)$ for some $h \in \N$ and some $a \in \{0, \ldots, h-1\}^{d-1}$.
\end{lemma}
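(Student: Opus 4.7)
The plan is to reduce the statement to a $(d-1)$-dimensional instance of Theorem~\ref{main_bc_bound}(c), applied to a canonical subsimplex of $S$ cut out by the hyperplane through $v_1,\ldots,v_{d-1}$. First, apply a lattice translation so that the unique interior integer point of $S$ becomes the origin, and label the vertices $v_1,\ldots,v_{d+1}$ so that $v_i$ carries the barycentric coordinate $\beta_i$ prescribed in the hypothesis. Multiplying the identity $\sum_{i=1}^{d+1}\beta_i v_i=o$ by $s_d-1=s_1s_2\cdots s_{d-1}$ (a direct consequence of the defining recursion of the Sylvester sequence) rearranges to
\begin{equation*}
\frac{v_d+v_{d+1}}{2}\;=\;-\sum_{i=1}^{d-1}\frac{s_d-1}{s_i}\,v_i.
\end{equation*}
The coefficients $(s_d-1)/s_i=\prod_{j\neq i,\,j\le d-1}s_j$ are integers, so $q:=(v_d+v_{d+1})/2$ belongs to $\Z^d$, and moreover lies in the hyperplane $H:=\lspan\{v_1,\ldots,v_{d-1}\}$.

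My next step is to analyse the $(d-1)$-simplex $\tilde S:=\conv\{v_1,\ldots,v_{d-1},q\}\subset H$, whose vertices all lie in the saturated sublattice $\Lambda:=H\cap\Z^d$. A direct check shows $S\cap H=\tilde S$: writing a point of $S$ as a convex combination of the $v_i$'s, the condition of lying in $H$ forces the $v_d$- and $v_{d+1}$-weights to coincide, after which these two weights combine into a single weight on $q$. Consequently the interior $\Lambda$-points of $\tilde S$ are exactly the interior integer points of $S$ that lie in $H$. Since $o$ is interior to $\tilde S$ with barycentric coordinates $(1/s_1,\ldots,1/s_{d-1},1/(s_d-1))$ and is by hypothesis the unique interior integer point of $S$, the simplex $\tilde S$ is a lattice simplex in $(H,\Lambda)\cong(\R^{d-1},\Z^{d-1})$ with exactly one interior lattice point. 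Applying Theorem~\ref{main_bc_bound}(c) in dimension $d-1$ with $i=d=(d-1)+1$ (noting $1/(s_d-1)=1/(s_{(d-1)+1}-1)$ attains the extremal value) then produces a unimodular equivalence $\psi:(H,\Lambda)\to(\R^{d-1},\Z^{d-1})$ sending $v_i\mapsto s_ie_i$ for $i\le d-1$, $q\mapsto o$, and $o\mapsto(1,\ldots,1)$.

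To finish, I would lift $\psi$ to an affine unimodular map $\Phi$ of $(\R^d,\Z^d)$: choose $e\in\Z^d$ with $\Z^d=\Lambda\oplus\Z e$, write $v_d=l_d+he$ with $l_d\in\Lambda$ and $h\in\Z\setminus\{0\}$ (necessarily nonzero since $v_d\notin H$), and let $\Phi$ coincide with $\psi$ on $H$ and send $e$ to $e_d$. Then $\Phi$ is unimodular by construction and $\Phi(v_i)=(s_ie_i,0)$ for $i\le d-1$; using $v_{d+1}=2q-v_d$ together with $\psi(q)=o$, a short computation gives $\Phi(v_d)=(a,h)$ and $\Phi(v_{d+1})=(-a,-h)$ for some $a\in\Z^{d-1}$. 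Hence $\Phi(S)=\conv(T^{d-1}_{1,d}\cup\{(a,h),-(a,h)\})$, because the vertex $(o,0)\in T^{d-1}_{1,d}$ is recovered as the midpoint of $[(a,h),(-a,-h)]$. The normalization $h>0$ and $a\in\{0,\ldots,h-1\}^{d-1}$ is then achieved by swapping $v_d$ with $v_{d+1}$ if necessary (they carry equal barycentric coordinates, so the labels are exchangeable) and by replacing $e$ with $e+m$ for a suitable $m\in\Lambda$, an operation that shifts $a$ by $-h$ times an arbitrary element of $\Z^{d-1}$. The main technical step is the divisibility $q\in\Z^d$: this is where the precise form of the barycentric coordinates and the Sylvester identity $s_d-1=s_1\cdots s_{d-1}$ are essential, with everything else reducing to routine lattice bookkeeping.
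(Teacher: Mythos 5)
Your proof is correct and follows essentially the same route as the paper's: show the midpoint $q=(v_d+v_{d+1})/2$ is integral, pass to the hyperplane section $\tilde S=S\cap H$, apply Theorem~\ref{main_bc_bound}(c) in dimension $d-1$ to identify $\tilde S$ with $T^{d-1}_{1,d}$, and then lift the resulting unimodular map and normalize $a$ modulo $h$. Your derivation of $q\in\Z^d$ --- multiplying the barycentric identity by $s_d-1$ and noting the coefficients $(s_d-1)/s_i$ are already integers --- is a slight shortcut over the paper's version, which first extracts the relations $(2p_i-p_d-p_{d+1})/s_i\in\Z^d$ via Proposition~\ref{div_property_sylv} and then specializes $s_1=2$; otherwise the arguments coincide step for step.
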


\begin{proof}
 Let $p_i$ be the vertex associated with the barycentric coordinate $1/s_i$ for $i \in \{1, \ldots, d-1\}$ and $p_d,p_{d+1}$ be the vertices associated with the
remaining two coordinates. Then, the single interior integral point of $S$ is
\[
 p := \frac{p_1}{s_1} + \ldots + \frac{p_{d-1}}{s_{d-1}} + \frac{p_d + p_{d+1}}{2(s_d - 1)}.
\]
Since $1/(s_d - 1) = 1 - 1/s_1 - \ldots - 1/s_{d-1}$, we get
\[
 2 p = p_d + p_{d+1} + \sum_{i=1}^{d-1} \frac{2 p_i - p_d - p_{d+1}}{s_i}.
\]
As $2p, p_d, p_{d+1} \in \Z^d$, we obtain
\[
 \sum_{i=1}^{d-1} \frac{2 p_i - p_d - p_{d+1}}{s_i} \in \Z^d.
\]
Applying Proposition~\ref{div_property_sylv}, we get $(2 p_i - p_d - p_{d+1})/s_i \in \Z^d$ for all $i \in \{1, \ldots, d-1\}$. 
In particular, $(p_d + p_{d+1})/2 \in \Z^d$, because $s_1 = 2$ and $p_1 \in \Z^d$.
Without loss of generality, we assume that $(p_d + p_{d+1})/2 = o$. 
The simplex $T := \conv(\{o, p_1, \ldots, p_{d-1}\})$ is a hyperplane section
of $S$. Furthermore, $p \in \relintr{T}$ and the 
$d$-tuple of barycentric coordinates of $p$ with respect to $T$ is
\[
 \left(\frac{1}{s_1}, \ldots, \frac{1}{s_{d-1}}, \frac{1}{s_d - 1}\right).
\]
Theorem~\ref{main_bc_bound}(c) asserts that (up to unimodular equivalence) there is only one $(d-1)$-dimensional simplex with one interior integral point
which has these coordinates and thus $T \cong T^{d-1}_{1,d} \times \{0\}$. By moving to another basis of $\Z^{d-1} \times \{0\}$ if necessary, we can
assume $T = \conv(\{o, s_1 e_1, \ldots, s_{d-1} e_{d-1}\})$. Then, we can write $p_d$ as $(a,h)$, where $a \in \Z^{d-1}$ and $h \in \Z$. 
 Since $p_d + p_{d+1} = o$, this leads to $p_{d+1} = -(a,h)$. 
Note that $h \neq 0$ because otherwise $S$ would not be full-dimensional. Furthermore, we can assume $h \in \N$ by interchanging the roles
of $p_d$ and $p_{d+1}$ if necessary.
It remains to show that up to unimodular equivalence, we can choose $a \in \Z^{d-1}$ such that $a \in \{0, \ldots, h-1\}^{d-1}$. 
Assume $a \notin \{0, \ldots, h-1\}^{d-1}$. Let $a' \in \{0, \ldots, h-1\}^{d-1}$ 
such that $a' \equiv a \modulo{h}$. There exists
a linear unimodular transformation preserving $\R^{d-1} \times \{0\}$ which maps the simplex $\left(T^{d-1}_{1,d} \times \{0\} \cup \{\pm(a, h)\} \right)$
to the simplex $\left(T^{d-1}_{1,d} \times \{0\} \cup \{\pm(a', h)\} \right)$. This observation completes the proof.
\end{proof}

In the proof of Theorem~\ref{main_volume}(b), we will show that the simplices for which equality in \eqref{eq:face_vol_bound} holds for $l \in \{1,d\}$
fulfil the assumptions of Lemma~\ref{lem_hyperplane_intersection}. We will then proceed by determining the values of $a$ and $h$ as used in the formulation
Lemma~\ref{lem_hyperplane_intersection}. To determine $a$, we will make use of the following lemma.

\begin{lemma}\label{lem_cov_min_simplex}
Let $S$ be the simplex in $\R^d$ given by $S = \conv(\{o, a_1 e_1, \ldots, a_d e_d\})$, where $d \ge 2$ and $a_1, \ldots, a_d > 1$ 
satisfy $\frac{1}{a_1} + \ldots +\frac{1}{a_d} = 1$. 
Let $v \in \R^d$. Then,
$S + v$ is lattice-free if and only if $v \in \Z^d$.
\end{lemma}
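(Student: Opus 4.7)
The plan is to prove both directions of the equivalence directly, and the argument should be essentially elementary once one writes $S$ via the facet inequality
\[
S = \left\{ x \in \R^d \;:\; x_1, \ldots, x_d \ge 0, \; \frac{x_1}{a_1} + \cdots + \frac{x_d}{a_d} \le 1 \right\},
\]
so that $\intr{S}$ is the set of points strictly satisfying every one of these inequalities.

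For the implication $v \in \Z^d \Rightarrow S+v$ is lattice-free, it suffices to show $S$ itself is lattice-free, since $\Z^d$-translation preserves lattice-freeness. Assume for contradiction $w \in \intr{S} \cap \Z^d$. Then every coordinate $w_i$ is a positive integer, hence $w_i \ge 1$, and therefore $\sum_i w_i/a_i \ge \sum_i 1/a_i = 1$, contradicting $w \in \intr{S}$.

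For the converse, suppose $v \notin \Z^d$. I would explicitly exhibit an interior lattice point of $S+v$ by setting $z_i := \lfloor v_i \rfloor + 1$ for each $i$, so $z \in \Z^d$, and letting $u := z - v$. Then $u_i = 1 - \{v_i\} \in (0,1]$ for every $i$, and because at least one $v_i$ is non-integral, there is at least one index $i_0$ with $u_{i_0} < 1$. Consequently $u_i > 0$ for all $i$ and
\[
\sum_{i=1}^d \frac{u_i}{a_i} \le \sum_{i=1}^d \frac{1}{a_i} = 1,
\]
with strict inequality provided by the $i_0$-th term. Thus $u \in \intr{S}$, which is the same as $z \in \intr{S+v}$, showing that $S+v$ is not lattice-free.

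I do not anticipate a genuine obstacle here: the lemma is really a self-contained verification, and the only thing one has to spot is the explicit choice $z = \lfloor v \rfloor + (1, \ldots, 1)$ in the second direction, together with the fact that the assumption $\sum 1/a_i = 1$ is exactly what makes both the lattice-freeness of $S$ and the strict interior inclusion of $u$ into $S$ work simultaneously. The mild care needed is just keeping track of which inequalities are strict, so that $u$ really lands in $\intr{S}$ and not merely on its boundary.
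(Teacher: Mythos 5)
Your proof is correct and follows essentially the same route as the paper: the paper also picks the integral point $v' := (\lfloor v_1 + 1 \rfloor, \ldots, \lfloor v_d + 1 \rfloor)$ (identical to your $z = \lfloor v \rfloor + (1,\ldots,1)$), verifying it lies in $\intr{S+v}$ by observing that $(0,1]^d \setminus \{(1,\ldots,1)\} \subseteq \intr{S}$, which is a mild repackaging of your direct check of the facet inequality; the paper leaves the lattice-freeness of $S$ itself as an unproved ``easy to see'', which your first paragraph fills in.
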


\begin{proof}
It is easy to see that $S$ is lattice-free and it is obvious that $S+v$ is also lattice-free for every $v \in \Z^d$.
Let us now show the reverse implication. Let $v \in \R^d  \setminus \Z^d$.
Observe that
\[
 \intr{S} = \setcond{x = (x_1, \ldots, x_d) \in \R^d}{x_1, \ldots, x_d > 0, \; \frac{x_1}{a_1} + \ldots + \frac{x_d}{a_d} < 1}.
\]
Hence, taking into account $\frac{1}{a_1} + \ldots +\frac{1}{a_d} = 1$, we get
$(0,1]^{d} \setminus \{\allone\} \subseteq \intr{S}$.  
Then, $(0,1]^d + v$ contains the point $v' := (\lfloor v_1 + 1 \rfloor, \ldots, \lfloor v_d + 1 \rfloor) \in \Z^d$. 
It is easy to check that since $v \not\in \Z^d$, we have $v' \neq v + \allone$.
%there exists a $j \in \{1, \ldots, d\}$ such that $v_j + 1 > \lfloor v_j + 1 \rfloor$. Hence, $v' \neq v + (1, \ldots, 1)$, 
Thus, $v' \in (0,1]^{d} \setminus \{\allone\} + v \subseteq \intr{S + v}$ and hence $S + v$ is not lattice-free.
\end{proof}

\begin{proof}[Proof of Theorem~\ref{main_volume}]
The proof is divided into four parts. First, we prove \eqref{eq:face_vol_bound} and assertion (a). Assertion (b) is then proven separately for
the cases $l=1$ and $l=d$.

{\em Inequality \eqref{eq:face_vol_bound}.} Let $\beta_1, \ldots, \beta_{d+1}$ be the barycentric coordinates of the interior integral point of $S$
and assume $\beta_1 \ge \ldots \ge \beta_{d+1}$.
By \eqref{eq:face_vs_bc_max}, we have
\[
 \max_{F \in \cF_l(S)} \vol_{\Z}(F) \le \frac{1}{l! \beta_{d-l+1} \cdots \beta_d}.
\]
Minimizing the product $\beta_{d-l+1} \cdots \beta_{d}$ corresponds to finding an optimal solution to $\IK^{d+1}(x_{d-l+1} \cdots x_d)$. 
Inequality~\eqref{eq:face_vol_bound} then follows from Theorem~\ref{thm_location}(d) with $n = d+1$. 

{\em Assertion (a).}
We need to show that
for every $l \in \{1, \ldots, d\}$, the simplex $S^d_1$ fulfils \eqref{eq:face_vol_bound} with equality. This can be seen from the fact that
for $l \in \{1, \ldots, d\}$, the simplex $S^d_1$ contains the face 
\[
\conv(\{o, s_{d-l+1} e_{d-l+1}, \ldots, s_{d-1} e_{d-1}, 2(s_d - 1)e_d\}),
\] 
which has normalized volume
\[
 \frac{2}{l!} (s_d - 1) \prod_{i = d-l+1}^{d-1} s_{i} = \frac{2(s_d - 1)^2}{l! (s_{d-l+1} - 1)}.
\]

\begin{claim}\label{clm_equiv_of_s1d}
Let $S := \conv \bigl(T^{d-1}_{1,d} \times \{0\} \cup \{\pm(s_d - 1)e_d\}\bigr)$. Then, $S \cong S^d_1$.
\end{claim}

\begin{proof}[Proof of the claim]\renewcommand{\qedsymbol}{$\blacksquare$}
Note that because $(s_d - 1)e_d \in \Z^d$, it suffices to show $S \cong S^d_1 - (s_d - 1)e_d$. Let $\varphi : \R^d \rightarrow \R^d$ be the linear mapping
given by $\varphi : e_i \mapsto e_i + \frac{s_d - 1}{s_i}e_d$ for $i \in \{1, \ldots, d-1\}$ and $\varphi : e_d \mapsto e_d$. Obviously,
$\varphi$ maps $S^d_1 - (s_d - 1)e_d$ to $S$. Furthermore, $\varphi$ preserves integrality as $s_i$ divides $s_d - 1$ for every $i \in \{1, \ldots, d-1\}$.
The associated transformation matrix is thus an integral lower triangular matrix and all entries on the diagonal are $1$ and hence, $\varphi$ is a unimodular
transformation.
% For the same reason, the inverse mapping $\varphi^{-1} : \R^d \rightarrow \R^d$ given by $\varphi^{-1} : e_i \mapsto e_i - \frac{s_d - 1}{s_i}e_d$ for $i \in \{1, \ldots, d-1\}$
% and $\varphi^{-1} : e_d \mapsto e_d$ also preserves integrality. Hence, $S \cong S^d_1 - (s_d - 1)e_d \cong S^d_1$.
%  First, we map $S$ to some simplex $S'$ using the linear mapping $\varphi : \R^d \rightarrow \R^d$ given by
% $\varphi: e_i \mapsto e_i - \frac{s_d - 1}{s_i}e_d$ for $i \in \{1, \ldots, d-1\}$ and $\varphi: e_d \mapsto e_d$. Clearly, this mapping
% is a unimodular transformation as the corresponding matrix has determinant one. Thus, $S \cong S'$. By construction,
% \[
% S' = \conv(\{s_1 e_1 - (s_d - 1)e_d, \ldots, s_{d-1} e_{d-1} - (s_d - 1) e_d, -(s_d - 1)e_d, -(s_d - 1)e_d\}). 
% \]
% Since $S' + (s_d - 1)e_d = S^d_1$ and the fact that $(s_d - 1)e_d \in \Z^d$, we get $S \cong S^d_1$. 
\end{proof}

{\em Assertion (b) for $l=1$.}
Assume $l=1$. As shown above, the simplex $S$ with $S \cong S^d_1$ fulfils \eqref{eq:face_vol_bound} 
with equality for $l=1$. Conversely, let $l=1$ and let $S \in \Sd{1}$ be an arbitrary simplex satisfying \eqref{eq:face_vol_bound} with equality.
Let $\beta_1 \ge \ldots \ge \beta_{d+1} > 0$
be the barycentric coordinates of the interior integral point of $S$ and let $p_1, \ldots, p_{d+1}$ the corresponding vertices of $S$.
By our assumptions and Theorems~\ref{thm_faces_bc} and \ref{thm_single_low}, we have
\begin{align}
 2(s_d - 1) = \max_{1 \le i < j \le d+1}\vol_{\Z}([p_i,p_j]) \le \max_{1 \le i < d+1}\frac{1}{\beta_i} = \frac{1}{\beta_d} \le 2(s_d-1). \label{eq:l1_one}
\end{align}
This implies $\beta_d = 1/(2(s_d - 1))$. By Theorem~\ref{main_bc_bound}(b), 
\begin{align}
(\beta_1, \ldots, \beta_{d+1}) = \left(\frac{1}{s_1}, \ldots, \frac{1}{s_{d-1}}, \frac{1}{2(s_d - 1)}, \frac{1}{2(s_d - 1)} \right). \label{eq:opt_config}
\end{align}
Thus, by Theorem~\ref{thm_faces_bc} and \eqref{eq:opt_config}, $[p_d, p_{d+1}]$ is the unique edge having normalized volume $2(s_d - 1)$.

As the barycentric coordinates of the interior integral point of $S$
fulfil \eqref{eq:opt_config}, we can apply Lemma~\ref{lem_hyperplane_intersection} and obtain
$S \cong \conv(T^{d-1}_{1,d} \times \{0\} \cup \{\pm(a, h)\})$ for some $h \in \N$ and $a \in \{0, \ldots, h-1\}^{d-1}$.
Thus, to prove $S \cong S_d^1$ it suffices to show that $h = s_d - 1$ and $a = o$. Because the barycentric coordinates associated
with $\pm(a,h)$ are $\beta_d$ and $\beta_{d+1}$, we have that $[p_d, p_{d+1}]$
is the edge with endpoints $\pm(a,h)$.
% We know the maximal volume of $S$ from assertion (a), which leads to
% \[
%  \frac{2}{d!}(s_d - 1)^2 = \vol(S) = \frac{2h \vol(T)}{d} = \frac{2h}{d!} (s_d - 1), 
% \]
% which gives $h = s_d - 1$ under the assumption that $S$ has maximal volume (and hence fulfils \eqref{eq:face_vol_bound} with equality for $l=d$). 
% From Theorem~\ref{thm_faces_bc}, for every $G \in \cF_1(S)$ with endpoints $u,v$ and associated barycentric coordinates $\beta_u, \beta_v$
% we have 
% \begin{align}
%          \vol_{\Z}(G) \le \frac{1}{\min\{\beta_u, \beta_v\}}. \label{eq:edge_min}
% \end{align}
% By our assumption, $S$ fulfils \eqref{eq:face_vol_bound} with equality for $l=1$. Hence, $S$ has an edge of maximal normalized volume. Together
% with \eqref{eq:edge_min}, this implies that the edge with endpoints $(a,h)$ and $-(a,h)$
% has to be of maximal normalized volume. 
Therefore, $2(s_d - 1) = \vol_{\Z}([p_d,p_{d+1}]) = \vol_{\Z}([(a,h),-(a,h)]) = \gcd(2(a,h))$.
This yields $h \ge s_d - 1$. On the other hand, $h > s_d - 1$ yields $\vol(S) > \frac{2(s_d - 1)}{d}\vol(T^{d-1}_{1,d}) = \frac{2(s_d - 1)^2}{d!}$.
Since $S \in \Sd{1}$, this is
a contradiction to Inequality~\eqref{eq:face_vol_bound}. Hence, $2(s_d - 1) = \gcd(2(a,s_d - 1))$.
As all components of $a$ are between $0$ and $h-1$, this also implies $a = o$, because otherwise $\gcd(2(a,s_d - 1)) < 2(s_d - 1)$. 
By Claim~\ref{clm_equiv_of_s1d}, this implies $S \cong S^d_1$.

{\em Assertion (b) for $l=d$.}
Assume $l=d$. By Theorem~\ref{thm_faces_bc} and Theorem~\ref{thm_location}(d), we have
\begin{align}
 \vol(S) \le \frac{1}{d!\beta_1 \cdots \beta_d} \le \frac{2}{d!} (s_d - 1)^2. \label{eq:all_ineq2}
\end{align}
We assume that $S$ fulfils \eqref{eq:face_vol_bound} with equality for $l=d$, i.e. $\vol(S) = \frac{2}{d!}(s_d - 1)^2$. It follows that
both inequalities in \eqref{eq:all_ineq2} are fulfilled with equality. Hence by Theorem~\ref{thm_location}(e) and because $d \ge 4$, we have
that the barycentric coordinates of the interior integral point of $S$ fulfil \eqref{eq:opt_config}. Therefore, we can again apply
Lemma~\ref{lem_hyperplane_intersection} to obtain $S \cong \conv(T \times \{0\} \cup \{\pm(a, h),\})$, where $T := T^{d-1}_{1,d}$, $h \in \N$ and 
$a \in \{0, \ldots, h-1\}^{d-1}$. Thus, 
\[
 \vol(S) = \frac{2h \vol(T)}{d} = \frac{2h}{d!} (s_d - 1).
\]
On the other hand, by assumption, we have $\vol(S) = \frac{2}{d!}(s_d - 1)^2$. This yields $h = s_d - 1$. It remains to show that 
$a = o$. By applying an appropriate unimodular transformation if necessary, we may assume $S = \conv(T \times \{0\} \cup \{\pm(a, s_d - 1)\})$.
We consider $S' \subseteq \R^{d-1}$ such that $S' \times \{1\}$ is the hyperplane section $S' \times \{1\} = S \cap (\R^{d-1} \times \{1\})$ of $S$. Then
\[
S' = \frac{h-1}{h} T + \frac{1}{h} a = \conv \left(\left\{o\right\} \cup \setcond{\frac{s_d-2}{s_d-1}s_i e_i}{i \in \{1, \ldots, d-1\}}\right) + \frac{1}{h}a.
\]
Observe that $\frac{h-1}{h}T$ fulfils the assumptions of Lemma~\ref{lem_cov_min_simplex} because
\[
 \frac{s_d - 1}{s_d - 2} \sum_{i=1}^{d-1}\frac{1}{s_i} = \frac{s_d - 1}{s_d - 2} \left(1 - \frac{1}{s_d - 1} \right) = 1.
\]
Thus, $\frac{h-1}{h} T + \frac{1}{h} a$ is lattice-free if and only
if $a/h \in \Z^{d-1}$. Assume the contrary, i.e. $a/h \not\in \Z^{d-1}$. Then 
there exists a point $p \in \intr{S'} \cap \Z^{d-1}$ and hence also in $\relintr{S'} \cap \Z^d$. Note that we also have
$(1, \ldots, 1, 0) \in \relintr{T \times \{0\}}$ and thus $(1, \ldots, 1,0) \in \intr{S} \cap \Z^d$. In other words, 
$S$ contains more than one integral point in its interior, a contradiction. Hence, $a/h \in \Z^{d-1}$ and since
$a \in \{0, \ldots, h-1\}^{d-1}$, this implies $a = o$. Thus, by Claim~\ref{clm_equiv_of_s1d},
we have $S \cong S_1^d$.
\end{proof}

\begin{proof}[Proof of Corollary~\ref{applying_blichfeldt}]
 This follows immediately from Theorem~\ref{main_volume} and Blichfeldt's Theorem~\ref{thm_blichfeldt}.
\end{proof}

\begin{remark}\thmtitle{On minimal $h$-faces of simplices in $\Sd{1}$}
 Using the results from Section~\ref{sec:auxiliary}, it is also possible to give upper bounds on the volume of minimal $h$-faces.
More precisely, we introduce the following functions on $\Sd{1}$: 
Let $d \ge 4$ and $g,h \in \{1, \ldots, d-1\}$ with $g < h$. Let
\[
\nu_h(S) := \min_{F \in \mathcal{F}_h(S)} \vol_{\Z}(F)  \qquad \forall \; S \in \Sd{1}
\] 
and let
\[
\gamma_{h,g}(S) := \min_{H \in \cF_h(S)} \max_{G \in \cF_g(H)} \vol_{\Z}(G) \qquad \forall \; S \in \Sd{1}.
\]
By \eqref{eq:face_vs_bc_max}, bounding $\nu_h(S)$ from above corresponds to bounding $\beta_1 \cdots \beta_h$ from below, which can be done using
Theorem~\ref{thm_location}(b). Furthermore, in view of \eqref{eq:face_vs_bc_max} and \eqref{eq:face_vs_bc_min}, one can show that bounding
$\gamma_{h,g}(S)$ corresponds to bounding $\beta_{h-g+1} \cdots \beta_h$ from below. Again, one can apply Theorem~\ref{thm_location}(b) to obtain a bound.
\end{remark}

\begin{remark}\thmtitle{On dimensions $d \le 3$}
At this point, we want to give a short overview about the validity of the results proved above in dimensions one, two and three. While all results hold for $d \ge 4$, 
Theorem~\ref{main_bc_bound} holds even for arbitrary dimension $d$. This is not true for 
the statements of Theorem~\ref{main_volume}.
The assumption of $d \ge 3$ in Theorem~\ref{main_volume} cannot be relaxed:
it fails to hold for $d=2$, since the triangle $T^2_{1,1} = \conv(\{o, 3e_1, 3e_2\})$ is the volume maximizer in $\mathcal{P}^2(1)$ (see \cite{MR1039134})
and has volume $4.5$, 
while the upper bound in the theorem would yield $4$. 
Regarding Theorem~\ref{main_volume}(b), we have to differ between the cases $l=1$ and $l=d$. The assumption of $d \ge 4$ cannot be relaxed for $l = d$, as
we know from the enumeration contained in \cite{MR2760660} that both $S^3_1$ and the 
tetrahedron $T^3_{1,2} = \conv(\{o, 2e_1, 6e_2, 6e_3\})$ satisfy the inequalities in Theorem~\ref{main_volume}(a) with equality for $l=d$ and $l=d-1$.
For $l=1$, however, we need to have minimal value for $\beta_d$, which is the case if and only if the tuple
of barycentric coordinates satisfies \eqref{eq:opt_config}; see Theorem~\ref{thm_single_low},
regardless of the value of $d$. Hence, for $l=1$ and $d \ge 1$, $\max_{F \in \cF_1(S)} \vol_{\Z} (F)= 2(s_d - 1)$ holds if $S \cong S^d_1$. 
For $l=1$ and $d \in \{1,2\}$, the reverse implication is also true, i.e., $\max_{F \in \cF_1(S)} \vol_{\Z} (F) = 2(s_d - 1)$ holds if and only if $S \cong S^d_1$
(as can be seen from the enumeration in \cite{MR1039134}).
Corollary~\ref{applying_blichfeldt} is also valid for $d \in \{1,2\}$, where the case $d=1$ is trivial and the case $d=2$
can be seen again from \cite{MR1039134}.
\end{remark}

\section{Proofs of results on $\Sd{1}$ involving dualization \\ (Theorems~\ref{mahler} and \ref{thm_dual_face_volumes})}\label{sec:dual_proofs}

We turn our attention to duals of simplices in $\Sd{1}$.
The following is a basic result from standard duality (see also \rescite{ben}{Proposition 3.6}).

\begin{proposition}\label{dual_same_bc}
Let $S$ be a $d$-dimensional simplex such that $o \in \intr{S}$ and let $v_1, \ldots, v_{d+1}$ be the vertices of $S$. Let
$\beta_1, \ldots, \beta_{d+1}$ be the barycentric coordinates of $o$ such that $o = \sum_{i=1}^{d+1} \beta_i v_i$. 
Then, there exists a unique sequence $u_1, \ldots, u_{d+1} \in \R^d$ such that 
\begin{align}
 \sprod{u_i}{v_j} = 1 \quad \forall \; i,j \in \{1, \ldots, d+1\}, \;  i \neq j. \label{eq:new_dual_prop_sprod}
\end{align}
For this sequence, one has:
\begin{empheq}{align}
  \vertset{S^{\ast}} &= \{u_1, \ldots, u_{d+1}\}, &\; \label{eq:new_dual_prop_vert}\\
  o &=\beta_1 u_1 + \ldots + \beta_{d+1} u_{d+1}, &\; \label{eq:neq_dual_prop_bc}\\
  \sprod{u_i}{v_i} &= 1 - \frac{1}{\beta_i} \quad \forall \; i \in \{1, \ldots, d+1\}. \label{eq:new_dual_prop_diag}
\end{empheq}
\end{proposition}

Next, we prove Theorem~\ref{mahler}. For that purpose, we need the following proposition.

\begin{proposition}\label{mahler_bary}
 Let $d \ge 1$ and $S \subseteq \R^d$ be a $d$-dimensional simplex such that $o \in \intr{S}$.
Let $\beta_1, \ldots, \beta_{d+1}$ be the barycentric coordinates
of $o$ with respect to $S$. Then
\[
 \vol(S) \vol(S^{\ast}) = \frac{1}{(d!)^2 \beta_1 \cdots \beta_{d+1}}.
\]
\end{proposition}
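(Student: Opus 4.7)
The plan is to set up matrix expressions for $\vol(S)$ and $\vol(S^{\ast})$ whose product telescopes via the barycentric identity.

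First, I would denote the vertices of $S$ by $v_1,\dots,v_{d+1}\in\R^d$, so $o=\sum_{i=1}^{d+1}\beta_i v_i$ with $\sum\beta_i=1$. Since $o\in\intr S$, each facet $F_i:=\conv\{v_j\setcondsep j\ne i\}$ carries a unique outer normal $u_i\in\R^d$ normalized by $\sprod{u_i}{v_j}=1$ for all $j\ne i$, and then $S^{\ast}=\conv\{u_1,\dots,u_{d+1}\}$. I would use the standard homogenized-matrix formula
\[
\vol(\conv\{w_1,\dots,w_{d+1}\})=\frac{1}{d!}\left|\det\begin{pmatrix}1 & \cdots & 1\\ w_1 & \cdots & w_{d+1}\end{pmatrix}\right|,
\]
applied to $v_i$ and to $u_i$, to get $\vol(S)\vol(S^{\ast})=\frac{1}{(d!)^2}|\det\tilde V|\,|\det\tilde U|$ where $\tilde V$, $\tilde U$ are the $(d+1)\times(d+1)$ matrices with columns $\binom{1}{v_j}$ and $\binom{1}{u_i}$, respectively.

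The second step is to compute $\tilde U^{T}\tilde V$ entrywise. The $(i,j)$ entry equals $1+\sprod{u_i}{v_j}$. For $i\ne j$ this is $2$. To handle $i=j$, I would plug the barycentric decomposition $o=\sum_j\beta_j v_j$ into $\sprod{u_i}{\cdot}$, obtaining $0=\beta_i\sprod{u_i}{v_i}+(1-\beta_i)$, hence $\sprod{u_i}{v_i}=1-1/\beta_i$ and the $(i,i)$ entry of $\tilde U^{T}\tilde V$ is $2-1/\beta_i$. Thus
\[
\tilde U^{T}\tilde V \;=\; 2\,\mathbf 1\mathbf 1^{T}-D, \qquad D:=\operatorname{diag}(1/\beta_1,\dots,1/\beta_{d+1}).
\]

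The third step is to evaluate this determinant via the matrix determinant lemma: writing $2\mathbf 1\mathbf 1^T-D=-D+2\mathbf 1\mathbf 1^T$ and applying $\det(A+uv^T)=\det A\cdot(1+v^TA^{-1}u)$ with $A=-D$ gives
\[
\det(\tilde U^{T}\tilde V)=(-1)^{d+1}\Bigl(\prod_i\tfrac{1}{\beta_i}\Bigr)\bigl(1-2\,\mathbf 1^{T}D^{-1}\mathbf 1\bigr).
\]
Since $\mathbf 1^{T}D^{-1}\mathbf 1=\sum_i\beta_i=1$, the second factor equals $-1$, so $|\det(\tilde U^{T}\tilde V)|=\prod_i 1/\beta_i$. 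Combining with the first step yields the claimed identity.

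There is no real obstacle: the identification of $\sprod{u_i}{v_i}$ via the barycentric identity is the one substantive input, and everything else is a clean rank-one determinant computation. The only minor care needed is to fix the normalization of the $u_i$ (so that $S^{\ast}=\conv\{u_i\}$, not some dilation thereof), which is exactly what the condition $o\in\intr S$ allows.
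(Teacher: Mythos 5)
Your proof is correct. The one substantive step --- identifying $\sprod{u_i}{v_i} = 1 - 1/\beta_i$ from the barycentric relation $o = \sum_j \beta_j v_j$ --- is right, and the rank-one determinant computation (matrix determinant lemma with $A = -D$, $u = 2\mathbf 1$, $v = \mathbf 1$) gives $\det(\tilde U^T\tilde V) = (-1)^d\prod_i\beta_i^{-1}$, hence $|\det\tilde U|\,|\det\tilde V| = \prod_i\beta_i^{-1}$, as needed. The invertibility of $D$ is guaranteed by $o\in\intr S$, and the normalization $\sprod{u_i}{v_j}=1$ for $j\ne i$ is exactly the one that makes $S^\ast=\conv\{u_1,\dots,u_{d+1}\}$, so all normalizations are consistent.

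The paper itself does not spell out a proof: it states that the proposition follows immediately from Proposition~3.6 of Nill's paper \cite{ben}, where the identity is established in the course of studying reflexive simplices and their weight systems. So your argument is not a re-derivation of the paper's route but a genuinely self-contained alternative: a direct linear-algebra computation on the homogenized vertex matrices of $S$ and $S^{\ast}$. What this buys is a short, citation-free proof using nothing beyond the Cayley--Menger-style volume formula and a rank-one determinant update; what the paper's approach buys is a link to the weight-system viewpoint on reflexive simplices that the authors use elsewhere. Both are valid, and your version would be a reasonable replacement if one wanted the paper to be self-contained at this point.
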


\begin{proof}[Proof sketch.]
 This result can be deduced from \cite[Proposition~3.6]{ben}. Since the language used in that paper differs from the one used here, we give a proof sketch for
Proposition~\ref{mahler_bary} as a service to the reader.

Let $v_1, \ldots, v_{d+1}$ be the vertices of $S$ such that $o = \sum_{i=1}^{d+1} \beta_i v_i$ and let $u_1, \ldots, u_{d+1}$ as in \eqref{eq:new_dual_prop_sprod}.
We introduce the matrices
\[
 V = \begin{pmatrix}
      v_1 & \cdots & v_{d+1}\\ 1 & \cdots & 1
     \end{pmatrix}
\quad \text{and} \quad
 U = \begin{pmatrix}
      u_1 & \cdots & u_{d+1}\\ -1 & \cdots & -1
     \end{pmatrix}.
\]
Then, one has $\vol(S) = \frac{1}{d!}|\det V|$ and $\vol(S^{\ast}) = \frac{1}{d!}|\det U|$. Furthermore, in view of \eqref{eq:new_dual_prop_sprod} and
\eqref{eq:new_dual_prop_diag}, we have that $V^{\top} U$ is a diagonal matrix with diagonal elements $-1/\beta_1, \ldots, -1/\beta_{d+1}$.
We conclude that $(d!)^2 \vol(S) \vol(S^{\ast}) = |\det V^{\top} U| = \frac{1}{\beta_1 \cdots \beta_{d+1}}$.
\end{proof}

\begin{proof}[Proof of Theorem~\ref{mahler}.]
By Proposition~\ref{mahler_bary} 
and by applying Lemma~\ref{i_and_k}(d) with $n = d+1$ and $\alpha_i = 1$ for every $i \in \{1, \ldots, n\}$,
we get the desired upper bound. Since, by Lemma~\ref{i_and_k}(d) and Theorem~\ref{main_bc_bound}(b), the product $\beta_1 \cdots \beta_{d+1}$ is minimal if and only if $\beta_{d+1}$ is minimal,
Theorem~\ref{main_bc_bound}(c) yields that the upper bound is attained if and only if $S = T^d_{1,d+1}$. 
The lower bound, meanwhile, follows from the fact that $\sum_{i=1}^{d+1} \beta_i = 1$ and hence by the inequality for the geometric and arithmetic means,
\[
 \beta_1 \cdots \beta_{d+1} \le \frac{1}{(d+1)^{d+1}}
\]
with equality if and only if $\beta_1 = \ldots = \beta_{d+1} = 1/(d+1)$. The latter is the case if and only if the unique interior integral point of $S$ is its centroid.
\end{proof}

% In order to prove Theorem~\ref{thm_dual_face_volumes}, let us first state two propositions containing basic facts about dual polytopes.

\begin{proposition}\label{dual_one_ilp}
 Let $P$ be an integral $d$-dimensional polytope such that $o \in \intr{P}$. Then $\intr{P^{\ast}} \cap \Z^d = \{o\}$.
\end{proposition}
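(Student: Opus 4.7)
The plan is to exploit the fact that $P^\ast$ is cut out by the inequalities $\langle v_i, y\rangle \le 1$ where $v_1, \ldots, v_m$ are the (integral) vertices of $P$, and combine this with integrality of $y$ and the strict convex-combination representation of interior points.

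First, I would recall that because $P$ is a polytope with vertex set $\vertset{P} = \{v_1, \ldots, v_m\}$, its dual admits the description
\[
 P^\ast = \setcond{y \in \R^d}{\sprod{v_i}{y} \le 1 \textforall i \in \{1, \ldots, m\}},
\]
and that $y$ lies in $\intr{P^\ast}$ if and only if all of these inequalities are strict. Now suppose $y \in \intr{P^\ast} \cap \Z^d$. Since each $v_i$ is integral and $y$ is integral, the strict inequality $\sprod{v_i}{y} < 1$ forces $\sprod{v_i}{y} \le 0$ for every $i$.

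Next, I would use the hypothesis $o \in \intr{P}$ to pass from ``$\le 0$ at every vertex'' to ``$= 0$ at every vertex''. Since $P$ is $d$-dimensional and $o$ is in its interior, there exist coefficients $\lambda_1, \ldots, \lambda_m > 0$ with $\sum_{i=1}^m \lambda_i = 1$ such that $o = \sum_{i=1}^m \lambda_i v_i$ (strict convex combination of the vertices, a standard property of the relative interior of a convex hull). Taking the inner product with $y$ gives
\[
 0 = \sprod{o}{y} = \sum_{i=1}^m \lambda_i \sprod{v_i}{y},
\]
and since every summand satisfies $\lambda_i > 0$ and $\sprod{v_i}{y} \le 0$, this can only hold if $\sprod{v_i}{y} = 0$ for all $i \in \{1, \ldots, m\}$.

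Finally, I would conclude by dimensionality. Because $P$ is full-dimensional, the vertices $v_1, \ldots, v_m$ span $\R^d$ as an affine set, and together with $o \in \intr{P}$ they span $\R^d$ linearly as well. Hence $y$ is orthogonal to all of $\R^d$, forcing $y = o$. Thus the only integral point in $\intr{P^\ast}$ is the origin, completing the proof. The only point requiring mild care is the passage from strict inequality to the integrality bound $\sprod{v_i}{y} \le 0$, which is the crucial place where integrality of the vertices of $P$ is used; everything else is formal.
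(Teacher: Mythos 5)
Your proof is correct, and it is the standard argument for this fact; the paper itself states the proposition as ``well-known'' and gives no proof at all, so there is no competing approach in the text to compare against. The argument is complete in all essentials: the key chain is that integrality of $y$ and of the vertices $v_i$ forces $\sprod{v_i}{y}\le 0$, the representation of $o$ as a strict convex combination of the $v_i$ (valid since $o\in\intr{P}=\relintr(\conv\vertset{P})$) forces $\sprod{v_i}{y}=0$ for all $i$, and full-dimensionality of $P$ together with $o\in P$ means the $v_i$ span $\R^d$ linearly, giving $y=o$. The only cosmetic omission is the trivial reverse inclusion $o\in\intr{P^\ast}\cap\Z^d$, which follows from $\sprod{v_i}{o}=0<1$; mentioning it would make the set equality complete, but this does not affect the substance.
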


Based on the previous propositions, we can now show the following lemma regarding the dual of $S^d_1$. This will then be used to characterize the
cases in which the dual of a simplex $S \in \Sd{1}$ has maximal volume or an edge of maximal length, respectively. We show that in this
case, $S^{\ast}$ is equivalent to $S^d_1$ and hence, we can use the following lemma to describe $S$.

\begin{lemma}\label{lem_dual_to_s1d}
 Let $S = \conv((T^{d-1}_{1,d} \times \{0\}) \cup \{\pm e_d\}) - (1, \ldots, 1,0)$. Then $S^{\ast} \cong S^d_1$.
\end{lemma}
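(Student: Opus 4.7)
My plan is to compute $S^{\ast}$ explicitly and then exhibit a unimodular equivalence with $S^d_1$. The first observation is that $\conv((T^{d-1}_{1,d} \times \{0\}) \cup \{\pm e_d\})$ is genuinely a $d$-simplex even though $T^{d-1}_{1,d} \times \{0\}$ contributes $d$ points, because the origin of $T^{d-1}_{1,d} \times \{0\}$ equals the midpoint of $\pm e_d$ and is therefore not extremal. Writing $e' := (1, \ldots, 1, 0)$ and $h := s_d - 1$, the $d+1$ vertices of $S$ are thus
\[
 q_k := s_k e_k - e' \quad (k = 1, \ldots, d-1), \qquad q_d := e_d - e', \qquad q_{d+1} := -e_d - e'.
\]
Using Proposition~\ref{lem_syl_seq}(b), one checks that $o$ has barycentric coordinates $(1/s_1, \ldots, 1/s_{d-1}, 1/(2h), 1/(2h))$ with respect to the $q_i$, so $o \in \intr{S}$ and $S^{\ast}$ is well-defined.

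Next I would compute the vertices $u^{(i)}$ of $S^{\ast}$ by solving the linear systems $\langle u^{(i)}, q_j \rangle = 1$ for every $j \neq i$. Separating the $d$-th coordinate from the rest and using $\sum_{k=1}^{d-1} 1/s_k = 1 - 1/h$, one finds
\[
 u^{(k)} = -e_k \quad (k = 1, \ldots, d-1), \qquad u^{(d)} = (h/s_1, \ldots, h/s_{d-1}, -h), \qquad u^{(d+1)} = (h/s_1, \ldots, h/s_{d-1}, h).
\]
The entries $h/s_i$ are integers because the defining recursion of the Sylvester sequence gives $s_1 \cdots s_{d-1} = s_d - 1 = h$, so $S^{\ast}$ is an integral simplex.

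Finally I would exhibit the unimodular map onto $S^d_1 = \conv(\{o, s_1 e_1, \ldots, s_{d-1} e_{d-1}, 2h e_d\})$. Let $\mathbf{1} := (1, \ldots, 1) \in \Z^d$ and let $A$ be the integer $d \times d$ matrix with columns $a_k := \mathbf{1} - s_k e_k$ for $k = 1, \ldots, d-1$ and $a_d := -e_d$. A direct calculation based on the identity $\sum_{j=1}^{d-1} a_j / s_j = e_d - \mathbf{1}/h$ (another consequence of Proposition~\ref{lem_syl_seq}(b)) shows that the affine map $\phi(x) := A(x - u^{(d+1)})$ sends $u^{(d+1)} \mapsto o$, $u^{(d)} \mapsto 2h e_d$, and $u^{(k)} \mapsto s_k e_k$ for $k \le d-1$, i.e.\ the vertices of $S^{\ast}$ onto those of $S^d_1$. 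To see that $\phi$ is unimodular, I would expand $\det A$ along the last column, reducing the task to computing the determinant of the $(d-1) \times (d-1)$ rank-one update $\mathbf{1}\mathbf{1}^T - \operatorname{diag}(s_1, \ldots, s_{d-1})$; the matrix-determinant lemma combined with the two Sylvester identities then yields $\det A = (-1)^d$.

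I expect the main obstacle to lie in producing the correct matrix $A$: while the columns $a_k = \mathbf{1} - s_k e_k$ look natural once the dual vertices are in hand, the verification of both the vertex action and the determinant is delicate and relies essentially on the identities $\sum_{k=1}^{d-1} 1/s_k + 1/(s_d - 1) = 1$ and $s_1 \cdots s_{d-1} = s_d - 1$ from Proposition~\ref{lem_syl_seq}(b), which are used repeatedly in both parts of the computation.
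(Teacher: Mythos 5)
The proof is correct and follows essentially the same strategy as the paper's: compute the vertices of $S^{\ast}$ explicitly, observe integrality from $s_1 \cdots s_{d-1} = s_d - 1$, and then exhibit a unimodular affine transformation onto $S^d_1$. Where you differ is in the final unimodularity check. The paper factors out the $e_d$-direction, works with a $(d-1)\times(d-1)$ linear map $\varphi$ on the first $d-1$ coordinates, and shows it is unimodular by producing the explicit inverse $\psi\colon e_i\mapsto s_ie_i-e$ and verifying that both $\varphi$ and $\psi$ preserve $\Z^{d-1}$; the last shear identifying $\conv(\{s_1e_1,\dots,s_{d-1}e_{d-1},\pm(s_d-1)e_d\})$ with $S^d_1$ is left implicit. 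You instead construct a single affine map $\phi(x)=A(x-u^{(d+1)})$ carrying the vertices of $S^{\ast}$ directly onto those of $S^d_1$, and verify $\det A=(-1)^d$ via expansion along the last column and the matrix-determinant lemma. The two are essentially the same linear algebra in different clothing --- the upper-left $(d-1)\times(d-1)$ block of your $A$ is the negative of the matrix of the paper's $\psi$ --- but your route is slightly tidier: it lands directly on $S^d_1$ and replaces the inverse-integrality check with a one-line determinant. All the computations in your sketch check out, including the barycentric coordinates, the dual vertices, the vertex action of $\phi$, and the determinant evaluation.
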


\begin{proof}
For convenience, let $\tilde{e} := e_1 + \ldots + e_{d-1}$.
 Let us first observe that the barycentric coordinates $\beta_1 \ge \ldots \ge \beta_{d+1}$ of $o$ with respect to $S$ satisfy 
\[
  (\beta_1, \ldots, \beta_{d+1}) := \left(\frac{1}{s_1}, \ldots, \frac{1}{s_{d-1}}, \frac{1}{2(s_d - 1)}, \frac{1}{2(s_d - 1)}\right).
\]
Furthermore, Proposition~\ref{dual_one_ilp} and \eqref{eq:neq_dual_prop_bc} yield that $o$ is the unique interior integral point of $S^{\ast}$ and 
$o$ has barycentric coordinates $\beta_1, \ldots, \beta_{d+1}$ with respect to $S^{\ast}$. Now define the facets $F_i$ of $S$
as $F_i := \conv(\vertset{S} \setminus \{s_i e_i - \tilde{e}\})$ for $i \in \{1, \ldots, d-1\}$. 
We set $u_i = -e_i$ for $i \in \{1, \ldots, d-1\}$ and
\[
 u_d = (s_d - 1)e_d + \sum_{j=1}^{d-1} \frac{s_d - 1}{s_j} e_j, \qquad u_{d+1} = - (s_d - 1)e_d + \sum_{j=1}^{d-1} \frac{s_d - 1}{s_j} e_j 
\]
and claim that $\vertset{S^{\ast}} = \{u_1, \ldots, u_{d+1}\}$.
Observe
that for $i,j \in \{1, \ldots, d-1\}$ with $j \neq i$ we have $\sprod{s_j e_j - \tilde{e}}{-e_i} = \sprod{-\tilde{e}}{-e_i} = 1$ 
as well as $\sprod{\pm e_d - \tilde{e}}{-e_i} = 1$.
% and since $s_i \ge 2$ for every $i \in \{1, \ldots, d-1\}$, we also
% have $\sprod{s_i e_i - \tilde{e}}{-e_i} < 1$ for every $i \in \{1, \ldots, d-1\}$. 
% Denoting the vertices of $S^{\ast}$ by $v_j$, $j \in \{1, \ldots, d+1\}$, and setting
% $v_j := -e_j$ for $j \in \{1, \ldots, d-1\}$, this leaves us to determine $v_d$ and $v_{d+1}$. Without loss of generality, we associate $v_d$ with the facet opposite of $e_d$.
% Then we claim that
Furthermore, for $i \in \{1, \ldots, d-1\}$, we have
\[
 \sprod{s_i e_i - \tilde{e}}{u_d} = - \sum_{k=1}^{d-1} \frac{s_d - 1}{s_k} + s_i \frac{s_d - 1}{s_i} = \left(- 1 + \frac{1}{s_d - 1} + 1\right)(s_d - 1) = 1 
\]
and analogously, $\sprod{e_d - \tilde{e}}{u_d} = 1$. By an analogous computation, one can also verify $\sprod{s_i e_i - \tilde{e}}{u_{d+1}} = \sprod{- e_d - \tilde{e}}{u_{d+1}} = 1$.
In view of \eqref{eq:new_dual_prop_sprod} and \eqref{eq:new_dual_prop_vert}, this suffices to show that $\vertset{S^{\ast}} = \{u_1, \ldots, u_{d+1}\}$.
To complete the proof, we have to show that the simplex
\[
 S^{\ast} = \conv \left( \left\{ e_1, \ldots, e_{d-1}, \pm(s_d - 1)e_d - \sum_{j=1}^{d-1} \frac{s_d - 1}{s_j}e_j  \right\} \right)
\]
is unimodularly equivalent to $S^d_1$. To this effect, we first use an integral translation that moves $-\sum_{j=1}^{d-1} \frac{s_d - 1}{s_j}e_j$
into the origin. The resulting simplex is
\[
 Q_1 := \conv \left( \left\{ e_1 + \sum_{j=1}^{d-1} \frac{s_d - 1}{s_j}e_j, \ldots, e_{d-1}+ \sum_{j=1}^{d-1} \frac{s_d - 1}{s_j}e_j, \pm(s_d - 1)e_d \right\} \right)
\]
and we want to show that $Q_1$ is unimodularly equivalent to the simplex
\[
 Q_2 := \conv( \left\{ s_1 e_1, \ldots, s_{d-1} e_{d-1}, \pm(s_d - 1)e_d \right\}).
\]
Both of these simplices have $\pm(s_d - 1)e_d$ as vertices. Hence, we can turn our attention to the linear mapping 
$\varphi: \R^{d-1} \times \{0\} \rightarrow \R^{d-1} \times \{0\}$ given by
\[
\varphi: \; e_i \mapsto \frac{1}{s_i}\left(e_i + \sum_{j=1}^{d-1}\frac{s_d - 1}{s_j} e_j\right) \qquad \forall \; i \in \{1, \ldots, d-1\},
\]
which maps the remaining vertices of $Q_2$ onto those of $Q_1$.
Once we have shown $\varphi(\Z^{d-1} \times \{0\}) = \Z^{d-1} \times \{0\}$, we are done.
For $i \in \{1, \ldots, d-1\}$, the integrality of the coefficients can be seen quickly from the fact that $s_i$ divides $s_d - 1$ for $i \in \{1, \ldots, d-1\}$, hence $\frac{s_d - 1}{s_j s_i}$ is obviously
an integer for every $j \in \{1, \ldots, j\}$ with $j \neq i$. For the coefficient of $e_i$, we have
\[
 \frac{1}{s_i} + \frac{s_d - 1}{(s_i)^2} = \frac{1 + s_1 \cdots s_{i-1}s_{i+1} \cdots s_{d-1}}{s_i} = \frac{1 + (s_i - 1)s_{i+1} \cdots s_{d-1}}{s_i}
\]
is also an integer, as $s_i$ divides $s_k - 1$ for $k \in \{i+1,\ldots,d-1\}$ and hence the numerator of this fraction is $0 \modulo{s_i}$.
We claim that $\varphi^{-1}: \R^{d-1} \times \{0\} \rightarrow \R^{d-1} \times \{0\}$ is the linear mapping $\psi$ given by $\psi: \; e_i \mapsto s_i e_i - \tilde{e}$ for all
$i \in \{1, \ldots, d-1\}$. Indeed,
\begin{align*}
 \psi (\varphi(e_i)) & = \frac{1}{s_i}\left(s_i e_i - \tilde{e} +\sum_{j=1}^{d-1}\frac{s_d - 1}{s_j} (s_j e_j - \tilde{e}) \right)\\
& = e_i + \frac{1}{s_i}\left(-\tilde{e} + (s_d - 1)\tilde{e} - \tilde{e}(s_d - 1)\left(1 - \frac{1}{s_d - 1}\right)\right)\\
& = e_i
\end{align*}
in view of Proposition~\ref{lem_syl_seq}(b). Obviously, $\psi$ preserves integrality. Thus, we get the desired statement.
\end{proof}

\begin{proof}[Proof of Theorem~\ref{thm_dual_face_volumes}]
The proof is again divided into four parts: the proofs of \eqref{eq:dual_vol} and assertion (a) are followed by proofs
of assertion (b) for $l=d$ and $l=1$, respectively.

{\em Inequality \eqref{eq:dual_vol}.}
Let $l \in \{1, \ldots, d\}$.
Let $\beta_1 \ge \ldots \ge \beta_{d+1}$ denote the barycentric coordinates of $o$
with respect to $S$. Then \eqref{eq:neq_dual_prop_bc} yields that $o$ has the same
barycentric coordinates with respect to $S^{\ast}$.
Proposition~\ref{dual_one_ilp} guarantees that $S^{\ast}$ is a $d$-dimensional simplex with precisely one interior integral point.
Hence, by \eqref{eq:face_vs_bc_max}, we have that for every $l$-dimensional face $F$,
\begin{align}
 \vol_{\Z}(F) \le \frac{1}{l!}\prod_{i=d-l+1}^{d} \frac{1}{\beta_i} \le \frac{2(s_d-1)^2}{l!(s_{d-l+1}-1)}, \label{eq:dual_eq_case}
\end{align}
where the second inequality follows from Theorem~\ref{thm_location}(d). This proves \eqref{eq:dual_vol}.

{\em Assertion (a).} 
% We write $e := e_1 + \ldots + e_d$ in the remainder of the proof. 
Let $S \cong \conv \left(T \times \{0\} \cup \{\pm e_d\}\right)$ and thus by Lemma~\ref{lem_dual_to_s1d} we have $S^{\ast} \cong S^d_1$. 
By Theorem~\ref{main_volume}(a), for every $l \in \{1, \ldots, d\}$, there
is a face $F \in \cF_l(S^{\ast})$ such that
\[
 \vol_{\Z}(F) = \frac{2(s_d-1)^2}{l!(s_{d-l+1}-1)}.
\]
Thus, $S$ fulfils \eqref{eq:dual_vol} with equality for $l \in \{1, \ldots, d\}$.

{\em Assertion (b) for $l=d$.}
% We have to show that if $l \in \{1,d\}$, equality in \eqref{eq:dual_vol} holds only if $(S^d_1 - e)^{\ast}$. 
Let $S \in \Sd{1}$ be such that equality holds in \eqref{eq:dual_vol} for $l=d$. 
Then, by Theorem~\ref{thm_location}(e),
\begin{align}
  (\beta_1, \ldots, \beta_{d+1}) = \left(\frac{1}{s_1}, \ldots, \frac{1}{s_{d-1}}, \frac{1}{2(s_d - 1)}, \frac{1}{2(s_d - 1)}\right). \label{eq:bc_tuple}
\end{align}
By Proposition~\ref{mahler_bary}, we have
\[
 \vol(S^{\ast}) = \frac{1}{(d!)^2 \beta_1 \cdots \beta_{d+1} \vol(S)}.
\]
From Lemma~\ref{lem_hyperplane_intersection}, we know that $S \cong \conv((T \times \{0\}) \cup \{\pm(a, h)\})$, where $h \in \N$
and $a \in \{0, \ldots, h-1\}^{d-1}$. We obtain
\begin{align*}
 \vol(S^{\ast}) = \frac{1}{(d!)^2 \beta_1 \cdots \beta_{d+1} \frac{2h}{d}\vol(T)} = \frac{1}{d!2h \beta_1 \ldots \beta_{d+1}(s_d-1)}.
\end{align*}
From $\beta_1 \cdots \beta_{d+1} = \frac{1}{4} (s_d - 1)^{-3}$, we get
\[
 \vol(S^{\ast}) = \frac{2 (s_d - 1)^2}{d!h}.
\]
Since $\vol(S^{\ast}) = \frac{2}{d!}(s_d - 1)^2$, we have $h=1$.

{\em Assertion (b) for $l=1$.}
Let $l=1$ and let $S \in \Sd{1}$ be such that equality holds in \eqref{eq:dual_vol}. 
Applying Theorem~\ref{thm_faces_bc} for the case of one-dimensional faces and Theorem~\ref{thm_single_low}, we deduce 
that \eqref{eq:bc_tuple}
are again the barycentric coordinates of $o$ with respect to both $S$ and $S^{\ast}$.
Applying Lemma~\ref{lem_hyperplane_intersection}, we get $S \cong \conv\left(T \times \{o\} \cup \{\pm(a,h)\}\right)$, 
where $h \in \N$ and $a \in \{0, \ldots, h-1\}^{d-1}$. Note that $(a,h)$ and $-(a,h)$ correspond to the barycentric coordinates 
$\beta_{d}$ and $\beta_{d+1}$, respectively.
We want to show that $h = 1$. We write $\tilde{e} := e_1 + \ldots + e_{d-1} \in \R^d$.
By applying a unimodular transformation if necessary, we may assume $S = \conv (T \times \{0\} \cup \{\pm(a,h)\}) - \tilde{e}$.
By \eqref{eq:neq_dual_prop_bc}, we can write
\begin{align}
 o = \sum_{i=1}^{d+1} \beta_i u_i, \label{eq:dual_l1}
\end{align}
where $u_1, \ldots, u_{d+1}$ denote the vertices of $S^{\ast}$.
By Theorem~\ref{thm_faces_bc} we have that for every edge $E := [u_i, u_j]$ of $S^{\ast}$, 
\[
\vol_{\Z}(E) \le \frac{1}{\min \{\beta_i,\beta_j\}}. 
\]
As all barycentric coordinates are known, this implies that the unique edge of $S^{\ast}$ with largest normalized volume is $[u_{d}, u_{d+1}]$ and since 
we assumed equality in Theorem~\ref{thm_dual_face_volumes},
we have $\vol_{\Z}([u_d, u_{d+1}]) = 2(s_d - 1)$. Hence, $u_{d} - u_{d+1} = 2(s_d-1) b$ for some integral unit vector $b$. 
We want to show $S^{\ast} \cap \lspan(\{u_{d} - u_{d+1}\}) = [b,-b]$. To see this, observe that
\[
 S^{\ast} \cap \lspan(\{u_{d} - u_{d+1}\}) = \left[ \sum_{i=1}^{d-1} \beta_i u_i + (\beta_{d} + \beta_{d+1}) u_d, \sum_{i=1}^{d-1} \beta_i u_i + (\beta_d + \beta_{d+1}) u_{d+1} \right].
\]
In view of \eqref{eq:dual_l1}, we have 
\[
 \sum_{i=1}^{d-1} \beta_i u_i + (\beta_{d} + \beta_{d+1}) u_d = \beta_{d+1} (u_d - u_{d+1}) = \frac{2(s_d - 1)}{2(s_d - 1)}  b = b.
\]
An analogous computation for $-b$ shows that $S^{\ast} \cap \lspan(\{u_{d} - u_{d+1}\}) = [b,-b]$.
This implies the equalities $\rho(S^{\ast},b) = 1$ and $\rho(S^{\ast},-b) = 1$ for the radius function of $S^{\ast}$.
Now we have the following relation between the radius function and the support function:
\[
 \rho(S^{\ast},b) h(S,b) = 1, \qquad \rho(S^{\ast},-b) h(S,-b) = 1;
\]
see, e.g., \rescite{MR1216521}{Remark 1.7.7}.
This yields $h(S,b) = h(S,-b) = 1$. We now show that $b \in \lspan(\{e_d\})$. In view of \eqref{eq:new_dual_prop_sprod},
for every $i \in \{1, \ldots, d-1\}$ and the vertex $s_i e_i - \tilde{e}$ of $S$ we have the equalities 
$\sprod{u_d}{s_i e_i - \tilde{e}} = 1$ and 
$\sprod{u_{d+1}}{s_i e_i - \tilde{e}}=1$. 
As a consequence, we have $\sprod{u_{d+1} - u_d}{s_i e_i - \tilde{e}} = 0$ for every $i \in \{1, \ldots, d-1\}$.
Since $\lspan(\setcond{s_i e_i - \tilde{e}}{i \in \{1, \ldots, d-1\}}) = \R^{d-1} \times \{0\}$, we obtain $u_{d+1} - u_d \in \lspan(\{e_d\})$ and hence $b \in \lspan(\{e_d\})$.
Together with $h(S,b) = h(S,-b) = 1$, this yields $S \subseteq \R^{d-1} \times [-1,1]$.
This shows $h = 1$ and
$S \cong \conv(T \times \{0\} \cup \{\pm e_d\})$. 
% Lemma~\ref{lem_dual_to_s1d} states that the dual of $S$ is equivalent to $S^d_1$
% and thus indeed has the desired volume and an edge of the desired length.
\end{proof}

In the remainder of this section we use dualization to show that a uniqueness results as in Theorem~\ref{main_bc_bound}(c) cannot be obtained for $i < d+1$.

\begin{proposition}\label{prop_duals_of_t}
 Let $i \in \{1, \ldots, d+1\}$ and $e := e_1 + \ldots + e_d$. Then the simplex $(T^d_{1,i} - e)^{\ast}$ is integral.
\end{proposition}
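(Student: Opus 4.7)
The plan is to compute the vertices of the dual $(T^d_{1,i}-e)^\ast$ explicitly and verify they lie in $\Z^d$. First I would identify $e = e_1 + \cdots + e_d$ as the unique interior integral point of $T^d_{1,i}$: since the barycentric coordinates of the interior point (Theorem~\ref{main_bc_bound}(a)/(b)) are $(1/s_1, \ldots, 1/s_{i-1}, 1/((d-i+2)(s_i-1)), \ldots, 1/((d-i+2)(s_i-1)))$, taking the linear combination of the listed vertices of $T^d_{1,i}$ returns precisely $(1,\ldots,1)$. Thus $T^d_{1,i}-e$ contains $o$ in its interior and its dual is a well-defined polytope with $d+1$ vertices, one per facet of $T^d_{1,i}-e$.

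There are two types of facets to treat. For each $k \in \{1,\ldots,d\}$, the facet of $T^d_{1,i}$ opposite the $k$-th non-origin vertex is contained in the coordinate hyperplane $\{x_k = 0\}$; after translation by $-e$ it lies in $\{x_k = -1\}$, and the corresponding dual vertex is $-e_k \in \Z^d$. The only remaining facet is the one opposite $o$, whose affine hull in $T^d_{1,i}$ is
\[
\sum_{j=1}^{i-1} \frac{x_j}{s_j} + \sum_{j=i}^{d} \frac{x_j}{(d-i+2)(s_i-1)} = 1.
\]
Writing $c := (d-i+2)(s_i-1)$ and translating by $-e$, I would use Proposition~\ref{lem_syl_seq}(b) to simplify the right-hand side: $\sum_{j=1}^{i-1}\frac{1}{s_j} = 1 - \frac{1}{s_i-1}$ and $\sum_{j=i}^d \frac{1}{c} = \frac{d-i+1}{c}$, whose combined value is $1 - 1/c$, so that after translation the hyperplane becomes
\[
\sum_{j=1}^{i-1} \frac{c}{s_j}\, x_j + \sum_{j=i}^{d} x_j = 1.
\]
This equation has the required form $\langle u, x\rangle = 1$, and the coefficient vector $u$ is the candidate dual vertex.

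The only step requiring a genuine arithmetic input is the integrality of $u$, namely $c/s_j \in \Z$ for $j \in \{1,\ldots,i-1\}$. This follows from the recursion $s_i = s_1 \cdots s_{i-1}+1$, which yields $s_j \mid s_1 \cdots s_{i-1} = s_i - 1$ for every $j < i$; hence $s_j$ divides $c=(d-i+2)(s_i-1)$. Combining this with the trivially integral dual vertices $-e_k$ shows that all $d+1$ vertices of $(T^d_{1,i}-e)^\ast$ lie in $\Z^d$, completing the proof. No part of the argument is a real obstacle; the main ``idea'' is simply recognizing that the non-trivial facet equation scales to an integral equation precisely because of the divisibility structure of the Sylvester sequence, which is why the construction of $T^d_{1,i}$ (with the specific factor $(d-i+2)(s_i-1)$) is well-suited to produce an integral dual.
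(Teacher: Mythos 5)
Your proof is correct, and it takes essentially the same overall route as the paper: first observe that the $d$ facets of $T^d_{1,i}-e$ contained in the translated coordinate hyperplanes $\{x_k=-1\}$ give the dual vertices $-e_1,\dots,-e_d$ trivially, and then compute the remaining vertex and verify its integrality via the divisibility $s_j \mid s_i-1$ for $j<i$. The only genuine difference is how you obtain the last vertex. The paper invokes Proposition~\ref{dual_same_bc} (that $o$ has the same barycentric coordinates with respect to $S$ and $S^{\ast}$) to write $o = \sum_j \beta_j u_j$ over the dual vertices $u_j$, then solves for $v$; you instead write down the defining equation of the facet of $T^d_{1,i}$ opposite $o$, translate by $-e$ using Proposition~\ref{lem_syl_seq}(b), and read the dual vertex off as the scaled normal vector. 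Your route is marginally more elementary, bypassing the self-duality of barycentric coordinates, and it also treats $i=1$, $i\in\{2,\dots,d\}$, and $i=d+1$ uniformly, whereas the paper splits into three cases. One small remark: the statement as printed has $e := e_1 + \ldots + e_{d+1}$, which must be a typo for $e := e_1 + \ldots + e_d$ since $T^d_{1,i}\subset\R^d$; you correctly use the $d$-dimensional version, which is the interior integral point $(1,\ldots,1)$ of $T^d_{1,i}$.
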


\begin{proof}
Let $i \in \{1, \ldots, d+1\}$ and write $Q^d_{1,i} := T^d_{1,i} - e$. For $j \in \{1, \ldots, d\}$, let $F_j$ denote the facet of $T^d_{1,i}$ not containing 
$c_j e_j$, where $c_j e_j$ is a vertex of $T^d_{1,i}$. In other words, $c_j = s_j$ for $j \in \{1, \ldots, i-1\}$
and $c_j = (d-i+2)(s_i - 1)$ for $j \in \{i, \ldots, d\}$. Then the facet $F_j$ yields $-e_j$ as a vertex of $(Q^d_{1,i})^{\ast}$. To see this, observe that
for any $c \in \R$,
$\sprod{c e_i - e}{-e_j} = 1$ for $i \neq j$, $i,j \in \{1, \ldots, d\}$.
Hence, $(Q^d_{1,i})^{\ast} = \conv(\{-e_1, \ldots, -e_d, v\})$, where $v$ is the vertex corresponding to the facet of $T^d_{1,i}$ which does not contain
$o$. Since the barycentric coordinates of $o$ with respect to $(Q^d_{1,i})^{\ast}$ are known, we can write
\[
 o = - \sum_{j=1}^{i-1}\frac{e_j}{s_j} - \sum_{j=i}^d \frac{e_j}{(d-i+2) (s_{i} - 1)} + \frac{v}{(d-i+2) (s_{i} - 1)}.
\]
Hence
\[
 v = \sum_{j=1}^{i-1} \frac{(d-i+2)(s_{i} - 1)}{s_j}e_j + \sum_{j=i}^d e_{j}.
\]
Since $s_j$ divides $s_{i} - 1$ for $j \in \{1, \ldots, i-1\}$, all vertices of $(Q^d_{1,i})^{\ast}$ are integral.
\end{proof}

\begin{remark}\thmtitle{Non-uniqueness of the minimizers of $\beta_i$}\label{rem_not_unique_bc}
The previous proposition implies that for $i \in \{1, \ldots, d+1\}$, the lattice simplices $T^d_{1,i} - e$ are reflexive in the sense of \cite{ben}.
As the following volume computation shows, $T^d_{1,i} - e$ and $(T^d_{1,i} - e)^{\ast}$ are not unimodularly equivalent unless $i=d+1$, 
despite the fact that the 
origin has the same barycentric coordinates with respect to their vertices.
In particular, this shows that the minimizers of the $i$-th
barycentric coordinate are not unique unless $i=d+1$. Let $i \in \{1, \ldots, d\}$ and let $\beta_1, \ldots, \beta_{d+1}$ denote the barycentric coordinates
of $o$. Observe that by Proposition~\ref{mahler_bary}, we have
\[
 \vol(T^d_{1,i} - e)\vol((T^d_{1,i} - e)^{\ast}) = \frac{1}{(d!)^2 \beta_1 \cdots \beta_{d+1}}.
\]
As 
\[
 \vol(T^d_{1,i} - e) = \frac{1}{d!} (d-i+2)^{d-i+1}(s_{i} - 1)^{d-i+1} \prod_{j=1}^{i-1} s_j = \frac{1}{d!} (s_{i}-1)^{d-i+2}(d-i+2)^{d-i+1}
\]
and
\[
 \frac{1}{\beta_1 \cdots \beta_{d+1}} = (s_{i} - 1)^{d-i+3}(d-i+2)^{d-i+2},
\]
we have
\[
 \vol((T^d_{1,i} - e)^{\ast}) = \frac{1}{d!}(s_{i} - 1) (d-i+2).
\]
Hence, $\vol(T^d_{1,i} - e) \neq \vol((T^d_{1,i} - e)^{\ast})$ for $i \in \{1, \ldots, d\}$.
\end{remark}

\section{Proofs of results about the coefficient of asymmetry \\ (Theorems~\ref{thm_ca} and \ref{thm_vol_by_ca})}\label{ca_proofs}

We first reformulate the definition of the coefficient of asymmetry. This alternative definition then simplifies the proof of Theorem~\ref{thm_ca}.

\begin{proposition}\label{prop_alt_ca}\thmtitle{Alternative definition of $\ca(P,o)$}
 Let $P \subseteq \R^d$ be a polytope such that $o \in \intr{P}$. Then
\begin{align*}
 \ca(P,o) &= \min \setcond{\alpha \ge 0}{P \subseteq -\alpha P}\\
	  &= \min \setcond{\alpha \ge 0}{\vertset{P} \subseteq -\alpha P}.
\end{align*}
Thus, $\ca(P,o)$ is the unique $\alpha > 0$ such that $P \subseteq -\alpha P$ and $\vertset{P} \cap -\alpha \bd{P} \neq \emptyset$.
\end{proposition}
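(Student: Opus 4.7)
The plan is to rephrase each of the three quantities in terms of the radius function $\rho(P,\cdot)$ and then compare them directly. Since $o \in \intr{P}$, for every nonzero direction $u \in \R^d$ the line through $o$ in direction $u$ meets $\bd{P}$ precisely in the two points $\rho(P,u)\,u$ and $-\rho(P,-u)\,u$. This lets one rewrite the definition of $\ca(P,o)$ in the concise form
\[
\ca(P,o) = \max_{u \neq o} \frac{\rho(P,u)}{\rho(P,-u)}.
\]
The maximum is attained because the ratio is continuous and $0$-homogeneous in $u$ (here one uses $o \in \intr{P}$ to guarantee $\rho(P,\pm u) > 0$), hence descends to a continuous function on the compact unit sphere.

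Next I would express the inclusion $P \subseteq -\alpha P$ in the same language. A direct computation from the definition of $\rho$ gives the identity $\rho(-\alpha P,\,u) = \alpha\,\rho(P,-u)$ for every $\alpha > 0$ and every nonzero $u$. Therefore $P \subseteq -\alpha P$ is equivalent to $\rho(P,u) \le \alpha\,\rho(P,-u)$ for all $u \neq o$, which in turn is equivalent to $\alpha \ge \ca(P,o)$. Hence the first minimum in the proposition equals $\ca(P,o)$ and is attained. For the second minimum, the equality $\{\alpha \ge 0 : P \subseteq -\alpha P\} = \{\alpha \ge 0 : \vertset{P} \subseteq -\alpha P\}$ is immediate from $P = \conv(\vertset{P})$ and the convexity of $-\alpha P$, so the two minima coincide.

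For the uniqueness claim, combining the previous steps yields
\[
\ca(P,o) = \max_{v \in \vertset{P}} \frac{1}{\rho(P,-v)},
\]
so choosing a vertex $v^{\ast}$ achieving this maximum gives $-v^{\ast}/\ca(P,o) \in \bd{P}$ and hence $v^{\ast} \in -\ca(P,o)\,\bd{P}$. Conversely, if $\alpha > \ca(P,o)$, then $1/\alpha < \rho(P,-v)$ for every vertex $v$, so every vertex lies strictly inside $-\alpha P$ and the second condition fails; and if $\alpha < \ca(P,o)$, the inclusion $\vertset{P} \subseteq -\alpha P$ fails already. This pins $\ca(P,o)$ down as the unique positive $\alpha$ satisfying both conditions.

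No serious obstacle is expected. The only step requiring a short calculation is the identity $\rho(-\alpha P, u) = \alpha\,\rho(P,-u)$, and the only point requiring mild care is checking that the maximum in the definition of $\ca(P,o)$ is indeed attained; both are routine once $\rho$ is unpacked.
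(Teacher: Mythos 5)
Your proof is correct and follows essentially the same route as the paper: both rewrite $\ca(P,o)$ via the radius function, use the identity $\rho(-\alpha P, u) = \alpha\,\rho(P,-u)$ to translate the inclusion $P \subseteq -\alpha P$, invoke convexity for the vertex reduction, and then extract the boundary-touching vertex for the final uniqueness claim. Your explicit formula $\ca(P,o) = \max_{v \in \vertset{P}} 1/\rho(P,-v)$ makes the last step slightly more direct than the paper's contradiction argument, but this is a cosmetic difference, not a different method.
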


\begin{proof}
It is easy to see that we can rewrite $\ca(P,o)$ as
\begin{align*}
 \ca(P,o) & = \max\setcond{\frac{\rho(P,u)}{\rho(P,-u)}}{u \in \R^d\setminus \{o\}} \\
  & = \min\setcond{\alpha \ge 0}{\frac{\rho(P,u)}{\rho(P,-u)} \le \alpha \; \forall \; u \in \R^d\setminus \{o\}} \\
  & = \min\setcond{\alpha \ge 0}{\rho(P,u) \le \alpha \rho(P,-u) \; \forall \; u \in \R^d\setminus \{o\}}.
\end{align*}
Because $\alpha \rho(P,-u) = \rho(\alpha P, -u) = \rho(-\alpha P, u)$ and the fact that two polytopes $Q,Q'$ satisfy $\rho(Q,u) \le \rho(Q',u)$ 
for every $u \in \R^d\setminus \{o\}$ if and only if $Q \subseteq Q'$, we get
\begin{align}
 \ca(P,o) = \min\setcond{\alpha \ge 0}{P \subseteq -\alpha P} = \min \setcond{\alpha \ge 0}{\vertset{P} \subseteq -\alpha P}, \label{eq:ca_proof}
\end{align}
where the second equality in \eqref{eq:ca_proof} is obvious from the convexity of $P$.
This implies $\vertset{P} \cap -\ca(P,o)\bd{P} \neq \emptyset$, because otherwise $\bd{-\ca(P,o) P} \cap P = \emptyset$ 
and one could choose
$\alpha' < \ca(P,o)$ such that $P \subseteq -\alpha' P \varsubsetneq -\ca(P,o)P$, a contradiction. 
\end{proof}

\begin{proof}[Proof of Theorem~\ref{thm_ca}.]
By Proposition~\ref{prop_alt_ca}, there is a vertex of $P$ such that $\ca(P,o)$ is attained along the line going through this vertex and $o$.
Hence, we can choose $v \in \vertset{P}$ and $u \in \bd{P}$ such that $o \in [v,u]$ and $\ca(P,o) = \frac{\|v\|}{\|u\|}$.
Considering any proper face of $P$ that contains $u$ and using Caratheodory's theorem (see, e.g., \rescite{MR1940576}{I. 2.3}), we can find an integral simplex $T$ of dimension at most $d-1$ in $\bd{P}$ such that
$u \in \relintr{T}$. Since we have $o \in \intr{P}$, $o \in \relintr{S}$ and $S \subseteq P$, we have $\relintr{S} \subseteq \intr{P}$. Hence, $o$ is the only relative interior integral point of the simplex $S := \conv(T \cup \{v\})$. We denote the barycentric coordinate of $o$
(with respect to this simplex) associated with $v$ by $\beta$. Clearly,
$\|v\|/\|u\| = (1 - \beta) /\beta$. Let $h$ be the dimension of $S$. Then, in view of \eqref{eq:bc_bound_formula}, we have
\begin{align}
\ca(P,o) = \frac{1 - \beta}{\beta} \le s_{h+1} - 2 \le s_{d+1} - 2. \label{eq:ca_eq_case}
\end{align}
It remains to characterize the equality case. First, observe that if $P \cong T^d_{1,d+1}$, we have $\ca(P,o) = s_{d+1} - 2$ due to 
\eqref{eq:proving_hensley}. Now, let $P \in \Pd{1}$ be such that $\intr{P} \cap \Z^d = \{o\}$
and $\ca(P,o) = s_{d+1}-2$. The assumption $\ca(P,o) = s_{d+1}-2$ 
implies that for the simplex $S$, we have equality in both inequalities in \eqref{eq:ca_eq_case}. Hence, $S$
has dimension $d$ and furthermore, $o$ has the barycentric coordinate $1/(s_{d+1}-1)$ with respect to the vertex $v$ of $S$. Theorem~\ref{main_bc_bound}(c) yields that $S$ is unimodularly equivalent to $T^d_{1,d+1}$
and, in particular, the unimodular transformation mapping $S$ onto $T^d_{1,d+1}$ has to map $v$ onto $o$. Thus, $u$ is in
the relative interior of the facet of $S$ which is opposite to $v$. As $u$ is also in $\bd{P}$, we have that this facet is
in the boundary of $P$. We finish the proof by showing $P = S$. Assume that there exists some $y \in P \setminus S$. Then for some facet $F$ of $S$, one has
$\relintr{F} \subseteq \intr{P}$. By our previous arguments, this facet cannot be the one opposite to
$v$. As the unimodular transformation mapping $S$ onto $T^d_{1,d+1}$ maps $v$ onto $o$,
$F$ therefore has to be unimodularly equivalent to a facet of $T^d_{1,d+1}$ which contains $o$. It is easy
to check that each such facet of $T^d_{1,d+1}$ contains at least one integral point in its relative interior\footnote{Every simplex of the form
$\conv(\{o, a_1 e_1, \ldots, a_d e_d\})$ with $a_1, \ldots, a_d > 0$ and $1/a_1 + \ldots + 1/a_d < 1$ contains the point $\allone - e_i$ in the relative
interior of the facet opposite to $a_i e_i$ for each $i \in \{1, \ldots, d\}$. In view of Proposition~\ref{lem_syl_seq}~(b), the simplex $T^d_{1,d+1}$
is of that form.} and hence $\relintr{F}$
contains a point of $\Z^d \setminus \{o\}$. Since $\relintr{F} \subseteq \intr{P}$, this is a contradiction to $\intr{P} \cap \Z^d = \{o\}$.
\end{proof}

\begin{remark}
Using the argument of Izhboldin and Kurliandchik as outlined in Remark~\ref{ik_remark}, it is possible to
determine the maximizer of the asymmetry coefficient within $\Sd{1}$
without any use of Soundararajan's arguments.
\end{remark}

\begin{proof}[Proof of Theorem~\ref{thm_vol_by_ca}.]
As $\intr{P} \cap \Z^d = \{o\}$, Mahler's theorem (Theorem~\ref{thm_mahler}) asserts that $\vol(P) \le (1 + \ca(P,o))^d$. Applying
Theorem~\ref{thm_ca} completes the proof.
\end{proof}

In the previous proof, we made use of Mahler's theorem (see Theorem~\ref{thm_mahler}). It should be noted that Sawyer proved a slightly sharper
inequality than Mahler's in \cite{MR0061139} which does, however, not lead to asymptotically better results.

\begin{remark}\thmtitle{Asymmetry of the volume-maximizers in $\Pd{1}$}
 We ask whether
the simplex $S^d_1$ has maximal volume among all elements of $\Pd{1}$. The main reason for this question is the apparent link 
between asymmetry and large volume. 
Assume that $P$ has maximal volume among all elements of $\Pd{1}$. Then $\vol(P) \ge \vol(S^d_1) = \frac{2}{d!}(s_d - 1)^2$.
Let, without loss of generality, $o \in \intr{P}$. 
Then by Theorem~\ref{thm_mahler}, we have $\vol(P) \le (1 + \ca(P,o))^d$. This yields the lower bound
\[
\ca(P,o) \ge \left( \frac{2}{d!} (s_d-1)^2 \right)^{1/d} -1 \ge  2^{2^{d+o(d)}}.
\]
Thus, every volume maximizer in $\Pd{1}$ has asymmetry coefficient with respect to its interior integral point of double exponential order in $d$. 
As simplices are typical examples of highly `non-centrally-symmetric' polytopes, this leads to the question whether every volume maximizer must be a 
simplex or at least must be close to a simplex with respect to some metric.
A similar argumentation can be applied to $\Pd{k}$ with $k \ge 2$ using a generalization of Theorem~\ref{thm_mahler} by 
Lagarias and Ziegler \rescite{MR1138580}{Theorem 2.5} and Pikhurko's bound on the coefficient of asymmetry of elements in $\Pd{k}$ \rescite{MR1996360}{Theorem 4}.
Whether for $d \ge 3$ and $k \ge 1$, the simplex $S^d_k$ has maximal volume among all elements of $\Pd{k}$ is an open question.
However, for $k \ge 2$, it is not even known whether $S^d_k$ has maximal volume among all elements of $\Sd{k}$.
\end{remark}

\section{Proofs of results on the lattice diameter (Theorems~\ref{general_ld_bound} and \ref{lpf_ld_bound})}
\label{sec:diameter}

In this section, we want to review a part of \cite{MR2855866} which used the bounds on barycentric coordinates of the interior integral point
of a simplex $S \in \Sd{1}$ as they were given by Pikhurko in \cite{MR1996360}. In \cite{MR2855866}, as a byproduct on the way to proving finiteness of $\Pim$ (up to unimodular
transformation) in fixed 
dimension $d$, a bound on $\ld{P}$ was established for $P \in \Pim$ which depended
on Pikhurko's bounds. Since we now have the exact lower bound on the barycentric coordinates, it is worth to revisit the 
corresponding arguments of \cite{MR2855866}. This will allow to determine the sharp upper bound on the 
lattice diameter of elements of $\Plm$, which is a larger family than $\Pim$. We will repeat here only those details from \cite{MR2855866} necessary for our purpose but on the other hand aim to give a self-contained proof of Theorem
\ref{lpf_ld_bound}. We make
use of the following lemma:

\begin{lemma}\label{lem_capturing_simplex}\thmtitle{\rescite{MR2855866}{p. 6}}
 Let $P \subseteq \R^d$ be an integral polytope for which $\relintr{P}\cap \Z^d$ is not empty and let $a \in \relbd{P} \cap \Z^d$. 
Then $P$ contains an integral simplex $S$ of dimension $h$, for some $h \in \{1, \ldots, d\}$, such that $a \in \vertset{S}$ and
$|\relintr{S} \cap \Z^d| = 1$.
\end{lemma}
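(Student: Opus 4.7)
The plan is to first construct, via a Carath\'eodory-type argument, an integral simplex inside $P$ with $a$ as a vertex that captures at least one lattice point in its relative interior, and then pass to a lexicographically minimal such simplex to force the number of captured lattice points down to one. For existence, I would fix any $p \in \relintr P \cap \Z^d$, extend the segment $[a,p]$ past $p$ inside $P$ until it meets the relative boundary at a point $b$, and take $F_0$ to be the smallest face of $P$ containing $b$, so that $b \in \relintr F_0$ and $\vertset F_0 \subseteq \Z^d$ by integrality of $P$. A standard Carath\'eodory-type reduction produces an affinely independent subset $\{v_1,\ldots,v_t\} \subseteq \vertset F_0$ with $b \in \relintr \conv\{v_1,\ldots,v_t\}$. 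Since $p \in \relintr P$ but $F_0 \subseteq \relbd P$, the point $a$ does not lie in $\aff F_0$, so $S_0 := \conv\{a,v_1,\ldots,v_t\}$ is an integral simplex in $P$ of dimension $t \in \{1,\ldots,d\}$ with $a \in \vertset S_0$ and, by the standard description of $\relintr S_0$ as open segments from $a$ to $\relintr \conv\{v_1,\ldots,v_t\}$, with $p \in \relintr S_0$. Hence the family $\mathcal{S}$ of integral simplices $S \subseteq P$ satisfying $a \in \vertset S$ and $\relintr S \cap \Z^d \neq \emptyset$ is non-empty.

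Pick $S \in \mathcal{S}$ minimizing $(\dim S, |\relintr S \cap \Z^d|)$ lexicographically, and set $N := |\relintr S \cap \Z^d|$; I claim $N = 1$. Suppose for contradiction $N \ge 2$. Label $\vertset S = \{a,w_1,\ldots,w_h\}$, denote the barycentric coordinates in $S$ by $\lambda_0,\ldots,\lambda_h$ (with $\lambda_0$ attached to $a$), and select $p_1 \in \relintr S \cap \Z^d$ minimizing $\lambda_0(p_1)$ together with an arbitrary second $p_2 \in \relintr S \cap \Z^d \setminus \{p_1\}$. Stellar-subdivide $S$ at $p_1$ into the sub-simplices $S^{(v)} := \conv((\vertset S \setminus \{v\}) \cup \{p_1\})$, $v \in \vertset S$. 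A direct computation in barycentric coordinates shows that $p_2 \in S^{(a)}$ is equivalent to $r_j := \lambda_j(p_2)/\lambda_j(p_1) \ge r_0 := \lambda_0(p_2)/\lambda_0(p_1)$ for every $j = 1,\ldots,h$. By the choice of $p_1$, $r_0 \ge 1$, so $r_j \ge 1$ for all $j$; but the weighted average $\sum_{j=1}^h \lambda_j(p_1) r_j / \sum_{j=1}^h \lambda_j(p_1) = (1-\lambda_0(p_2))/(1-\lambda_0(p_1)) \le 1$ then forces every $r_j = 1$ and $r_0 = 1$, whence $p_2 = p_1$, a contradiction. Therefore $p_2 \in S^{(w_j)}$ for some $j \in \{1,\ldots,h\}$.

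Let $F$ be the smallest face of $S^{(w_j)}$ containing $p_2$, so $p_2 \in \relintr F$. If $a \notin \vertset F$, then $F$ lies in the facet of $S^{(w_j)}$ opposite $a$, which is also a facet of $S^{(a)}$; this would place $p_2$ in $S^{(a)}$, contradicting the previous paragraph. Hence $a \in \vertset F$, all vertices of $F$ are integer, $F \subseteq S^{(w_j)} \subseteq P$, and $p_2 \in \relintr F \cap \Z^d$, so $F \in \mathcal{S}$. If $\dim F < h$, this contradicts the minimality of $\dim S$. If $\dim F = h$, then $F = S^{(w_j)}$, and the inclusion $\relintr S^{(w_j)} \subseteq \relintr S$ (an open subset of $\aff S$ contained in $S$) combined with $p_1 \in \vertset S^{(w_j)}$ gives $|\relintr S^{(w_j)} \cap \Z^d| \le N - 1$, contradicting the minimality of $N$ within dimension $h$. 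The main obstacle I foresee is the barycentric-coordinate identification of $S^{(a)}$ together with the averaging argument showing $p_2 \notin S^{(a)}$, in particular handling ties in $\lambda_0$ cleanly; a secondary subtlety is verifying that the smallest face of $S^{(w_j)}$ containing $p_2$ must actually include $a$ as a vertex.
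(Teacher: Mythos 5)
The paper does not prove this lemma itself; it is cited verbatim from [AWW11, p.\ 6], so there is no in-paper argument to compare against. On its own terms, your proof is correct. The existence step is sound: the Carath\'eodory-type reduction (choosing a vertex of the polytope of representing convex coefficients) really does yield affinely independent $v_1,\ldots,v_t \in \vertset{F_0}$ with $b \in \relintr \conv\{v_1,\ldots,v_t\}$, and the observation that $a \notin \aff F_0$ (else $p \in \aff F_0 \cap P = F_0 \subseteq \relbd{P}$) makes $S_0$ a genuine simplex with $p$ in its relative interior. The descent step is also sound: the barycentric computation identifying $S^{(a)} = \conv\{p_1,w_1,\ldots,w_h\}$ via the conditions $r_j \ge r_0$ is exactly right (and the normalization $t+\sum\mu_j=1$ is automatic, as you implicitly use), the weighted-average argument using $r_0 \ge 1$ cleanly rules out $p_2 \in S^{(a)}$, and both branches of the case analysis on the smallest face $F \ni p_2$ of $S^{(w_j)}$ correctly contradict lexicographic minimality (the inclusion $\relintr{S^{(w_j)}} \subseteq \relintr{S}$ plus $p_1 \in \vertset{S^{(w_j)}}$ strictly drops the interior lattice-point count when $\dim F = h$). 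The one subtlety you flagged --- that in the $a \notin \vertset{F}$ case $F$ sits in the facet of $S^{(w_j)}$ opposite $a$, which coincides with the facet of $S^{(a)}$ opposite $w_j$ --- is correct, since both are $\conv\left(\{p_1\} \cup \vertset{S} \setminus \{a, w_j\}\right)$. So the argument gives a self-contained, elementary proof of a lemma the paper only quotes.
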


\begin{proof}[Proof of Theorems~\ref{general_ld_bound} and \ref{lpf_ld_bound}.]
We first prove the bounds on the lattice diameter given in the two theorems. Then, we characterize the equality cases.

{\em The inequalities in Theorems~\ref{general_ld_bound} and \ref{lpf_ld_bound}.}
We can assume $d \ge 2$. Let $P \subseteq \R^d$ be an integral polytope and let $m := \ld{P'}$, where $P'$ denotes the convex hull of $\intr{P} \cap \Z^d$.
There are two cases: $P' \neq \emptyset$, i.e. $m \ge 0$, or $P' = \emptyset$ and hence $m = -1$.
For most of the proof, our argumentation is the same for both cases, with the only difference that, if $P' = \emptyset$, we make the additional assumption
of maximality of $P$ (i.e. $P \in \Plm$) as in the formulation of Theorem~\ref{lpf_ld_bound}.
Choose a line $l$ such that $|P \cap \Z^d \cap l|$ is maximal, i.e., such that $\ld{P}$ is attained along $l$. 
If $\ld{P} \le m+2$, then the statements hold trivially since $s_d \ge 2$. Hence, we can assume $\ld{P} > m+2$. 
By applying some unimodular transformation,
we can assume $l = \lspan(\{e_d\})$. 
By $\pi: \R^d \rightarrow \R^{d-1}$ we denote the projection to the first $d-1$ components.
Then, the polytope $Q := \pi(P)$ is the projection of $P$ along $l$. 
Clearly, for every point $p \in P \cap \Z^d \cap l$, we have
$\pi(p) = o$. Since $\ld{P} > m+2$, it follows that $o \in \bd{Q}$. To see this, observe that otherwise $\relintr{l \cap P} \subseteq \intr{P}$.
Now $\pi^{-1}(o) \cap P$ contains $\ld{P} + 1 \ge m + 4$ integral points. This implies that $\intr{P}$ contains at least
$m+2$ collinear integral points, a contradiction to the choice of $m$. Note that $\intr{Q} \cap \Z^{d-1}$ is not empty.
In the case $\intr{P} \cap \Z^d \neq \emptyset$ (i.e. $m \ge 0$), this is obvious
as every interior point of $P$ has to be projected to a point in $\intr{Q}$. 
In the case $P' = \emptyset$, we made the additional assumption $P \in \Plm$. 
It is easy to see that under this assumption,
$\intr{\pi(P)} \cap \Z^{d-1}$ is not empty (see also \rescite{MR2855866}{Lemma 3.7}).

Hence, from Lemma~\ref{lem_capturing_simplex},
we know that for some $h \in \{1, \ldots, d-1\}$ there exists an $h$-dimensional simplex $S \subseteq Q$ which satisfies $o \in \vertset{S}$ and contains exactly one point of 
$\pi(\Z^d)$ in its relative interior.
Denote this point by $q$ and let $q := \sum_{i=0}^{h} \lambda_i v_i$ be a barycentric representation of $q$, where $v_i \in \vertset{S}$ for $i \in \{1, \ldots, h\}$ 
and $v_0 = o$.
For $x \in Q$, define $f(x)$ as the length of the intersection between $x+\lspan(\{e_d\})$ and $P$, or more formally, 
$f(x) := \len(\pi^{-1}(x) \cap P)$. Observe that $f(q) \le m+2$ as otherwise, there would be at least $m+2$ collinear points in the interior of $P$. 
Furthermore, by convexity of $P$, the function $f$ is concave on $Q$ and we can use 
Jensen's inequality (see, e.g., \rescite{MR1451876}{Theorem 4.3}) to obtain
\begin{align}
\ld{P'} + 2 & \ge m + 2 \ge f(q) = f \left( \sum_{i=0}^h \lambda_i v_i \right) \geq \sum_{i=0}^h \lambda_i f(v_i) \notag \\ 
& \geq \lambda_0 f(v_0) \geq \lambda_0 \ld{P} \label{eq:jensen}\\ 
&\geq \frac{1}{s_{h+1} - 1} \ld{P}  \geq \frac{1}{s_{d} - 1} \ld{P} \notag. 
\end{align}
% This yields $\ld{P} \le \frac{\ld{P'} + 2}{\lambda_0}$.
The last line in \eqref{eq:jensen} comes from the fact that
$\lambda_0$ is a barycentric coordinate of the unique
interior integral point of an integral simplex of dimension at most $d-1$ and we can apply Theorem~\ref{main_bc_bound}.
This proves the inequalities in Theorems~\ref{general_ld_bound} and \ref{lpf_ld_bound}.

{\em Characterization of the equality cases.}
First, we show that in the case $P' = \emptyset$, $P \in \Plm$ as well as in the case $P' \neq \emptyset$, there exist polytopes for which the respective bounds are attained with equality. 
If $P' \neq \emptyset$ and hence $m \ge 0$, note that the simplex $S^d_{m+1}$
has $m+1$ interior integral points which are collinear (see \rescite{MR1138580}{Proposition 2.6}) and hence $\operatorname{ld}\left(\conv\left(\operatorname{int}\left(S^d_{m+1}\right) \cap \Z^d\right)\right) = m$. Furthermore, this simplex has
$[o, (m+2)(s_d - 1)e_d]$ as an edge. This edge contains $(m+2)(s_d - 1) + 1$ integral points 
and therefore, $\operatorname{ld}\left(S^d_{m+1}\right) \ge (s_d - 1) (m + 2)$.
Together with the bound in \eqref{eq:jensen}, this yields $\operatorname{ld}\left(S^d_{m+1}\right) = (s_d - 1) (m + 2)$. In the case $P' = \emptyset$, $P \in \Plm$, i.e. $m = -1$,
observe that the simplex $S^d_0$ does not contain an integral point in its interior but each facet of $S^d_0$ 
has an integral point in its relative interior. Thus, $S^d_0$ belongs to $\Pim$ and therefore also to $\Plm$. It is easy to check that
$\operatorname{ld}\left(S^d_0\right) \ge s_d - 1$ and by our previous arguments, this implies $\operatorname{ld}\left(S^d_0\right) = s_d - 1$.

Now, we assume that $P$ fulfils the inequality in Theorem~\ref{general_ld_bound} or \ref{lpf_ld_bound}, respectively, with equality.
More precisely, let $P \subseteq \R^d$ be a polytope with $\vertset{P} \subseteq \Z^d$ and $\ld{P'} = m$ for some $m \in \N \cup \{0, -1\}$
such that $\ld{P} = (m + 2)(s_d - 1)$, where $P' := \conv(\intr{P} \cap \Z^d)$. 
We proceed as above, i.e, we project along the direction in which the lattice diameter
is attained (we can again assume that this direction is $e_d$) and write $Q := \pi(P)$. 
Again, we can then construct the simplex $S$ as we did before.
Thus, $S$ is an integral simplex of dimension $h \le d-1$ with precisely one interior integral point and $v_0 := o$ as one of its vertices.
By our assumption on the lattice diameter of $P$, all inequalities in \eqref{eq:jensen} are fulfilled with equality for $P$.
The equality $s_{h+1} - 1 = s_d - 1$ then immediately implies $h = d-1$. Moreover, equality in 
\eqref{eq:jensen} also yields 
$\lambda_0 = 1/(s_d - 1)$. This implies
that the interior integral point of $S$ has a barycentric coordinate with minimal value. By Theorem~\ref{main_bc_bound}(b),
we know that $S \cong \conv(\{o, s_1 e_1, \ldots, e_{d-1} s_{d-1}\})$. 
%Since $h \le d-1$ and in view of 
%\eqref{eq:jensen}, we have that
%\begin{align}
% \ld{P} \le (\ld{P'}+2)(s_{h+1} - 1) \le (\ld{P'}+2)(s_d - 1). \label{eq:h-dim}
%\end{align}
%Therefore, $P$ having maximal value for $\ld{P}$ implies that all inequalities in \eqref{eq:h-dim} are attained with equality and hence $h=d-1$.
%The assumption of maximality for $\ld{P}$ also yields $\lambda_0 = s_d - 1$. This implies
%that the interior integral point of $S$ has a barycentric coordinate with minimal value. By Theorem~\ref{main_bc_bound}(b),
%we know that $S$ is (up to unimodular equivalence) uniquely determined as $\conv(\{o, s_1 e_1, \ldots, e_{d-1} s_{d-1}\})$. 

Next, we want to show $Q = S$ by showing that all facets of $S$ are also facets of $Q$.
As $P$ fulfils all inequalities in \eqref{eq:jensen} with equality,
we have $f(v) = 0$ for all vertices $v \in \vertset{S} \setminus \{v_0\}$. 
This suffices to show $Q = S$ in the
case $d=2$ (and hence $\dim(Q) = \dim(S) = 1$), as the vertex of $S$ which is not $v_0$ has to be in $\bd{Q}$. Hence, we can assume $d \ge 3$.
Denote by $F$ the facet of $S$ such that $v_0 \not\in F$. Note that for all facets $G$ of $S$ with $G \neq F$, one has
$\relintr{G} \cap \Z^{d-1} \neq \emptyset$. 
If $G \not\subseteq \bd{Q}$, we have $\relintr{G} \subseteq \intr{Q}$ and one can apply 
Lemma~\ref{lem_capturing_simplex} and find a simplex $S' \subseteq G$ of dimension $h' \le d-2$ which has $v_0$ as a vertex
and contains an interior integral point $q'$. Let $v'_i$ be the vertices of $S'$ and let $q' = \sum_{i=0}^{h'} \lambda'_i v'_i$ with
$\sum_{i=0}^{h'} \lambda'_i = 1$ and $\lambda'_i > 0$ for all $i \in \{0, \ldots, h'\}$.
By the same construction as before, we get
\[
 m+2 \ge f(q') = f \left(\sum_{i=0}^{h'} \lambda'_i v'_i \right) \ge \lambda'_0 f(v'_0) \ge \lambda'_0 \ld{P}. 
\]
This yields $\lambda'_0 \le \frac{1}{s_d - 1}$ by our assumption on $\ld{P}$, a contradiction to \eqref{eq:bc_bound_formula} as $S'$
is of dimension at most $d-2$.
This shows that $G \subseteq \bd{Q}$ for all facets $G \neq F$ and therefore, $Q \subseteq \R^{d-1}_{\ge 0}$.
It remains to show $F \subseteq \bd{Q}$. Assume the contrary, then $\relintr{F} \subseteq \intr{Q}$ and therefore, $f(y) > 0$ for every $y \in \relintr{F}$.
Let $y'$ be the point of $\relintr{F}$ such that $q$ is in the relative interior of the line segment $[v_0,y']$. Again employing concavity of $f$, we have
\[
 m+2 \ge f(q) \ge \lambda_0 f(v_0) + (1 - \lambda_0) f(y') \ge m+2 + (1 - \lambda_0) f(y').
\]
As both $1 - \lambda_0$ and $f(y')$ are positive, this is a contradiction.

We have proven $Q = S$. In other words, we have shown that the projection of $P$ along $e_d$ is equivalent to 
$\conv(\{o, s_1 e_1, \ldots, e_{d-1} s_{d-1}\})$. 
We conclude the proof by constructing $P$ from this projection. First, we show that $P$ is a simplex in dimension $d$.
For each vertex $v_i$ of $Q$, with $i \in \{1,\ldots,d-1\}$, the point
$p_i := \pi^{-1}(v_i) \cap P$ is a vertex of $P$ (since $f(v_i) = 0$ as we have shown before). 
Furthermore, the endpoints of $\pi^{-1}(v_0) \cap P$ are two more vertices of $P$, 
denoted by $p_0$ and $p'_0$. 
Let $T := \conv(\setcond{p_i}{i \in \{0, \ldots, d-1\}} \cup \{p'_0\})$. By construction,
all facets of $T$ which contain both $p_0$ and $p'_0$ are in the boundary of $P$ (as their respective projections lie in the boundary of $Q$).
We can write
\[
 q_0 := \lambda_0 p_0 + \sum_{i=1}^{d-1} \lambda_i p_i, \qquad q_0' := \lambda_0 p_0' + \sum_{i=1}^{d-1} \lambda_i p_i,
\]
which yields a point in the respective relative interior of each of the remaining two facets of $T$. 
By construction, the segment $[q_0, q'_0]$ has length
\begin{align*}
\len([q_0,q'_0]) = \lambda_0 \len([p_0,p'_0]) = \lambda_0 (m+2) (s_d - 1) = m+2.
\end{align*}
Consequently, one has $q_0,q'_0 \in \bd{P}$ since otherwise, the segment $[q_0, q'_0]$ has $m+2$ integral points
contained in the interior of $P$, which contradicts $m = \ld{P'}$. Thus, all facets of $T$ are contained in the boundary of $P$ and hence $P = T$.

Finally, we show that the simplex $P$ is unimodularly equivalent to $S^d_{m+1}$.
Without loss of generality, we set $p_0 = o$ and $p'_0 = \ld{P} e_d =(m+2)(s_d - 1)e_d$. 
The points $p_i$, $i \in \{1, \ldots, d-1\}$, are of the form $p_i = s_i e_i + c_i e_d$
for some integers $c_i$. Note that there
exists some $c_0 \in \R$ such that $q_0 = (1, \ldots, 1, c_0)$
and $q_0' = (1, \ldots, 1, c_0 + m + 2)$. Moreover, $c_0 \in \Z$. To see this, assume the contrary, i.e. $c_0 \in \R \setminus \Z$. Then $[c_0, c_0 + m+2]$ contains
$m+2$ integral points which lie in the interior of $P$, a contradiction to the definition of $m$. From the definition of $q_0,q_0'$ we get
\[
 c_0 = \sum_{i=1}^{d-1} \lambda_i c_i. 
\]
By Proposition~\ref{div_property_sylv} and because $\lambda_i = 1/s_i$ for $i \in \{1, \ldots, d-1\}$, we know that $c_0 \in \Z$ implies that $s_i$ divides $c_i$ for every $i \in \{1, \ldots, d-1\}$.
Consider the linear mapping $\varphi$ defined by $e_i \mapsto e_i - \frac{c_i}{s_i}e_d$ for every $i \in \{1, \ldots, d-1\}$ and $e_d \mapsto e_d$. Because $s_i$ divides
$c_i$ for $i \in \{1, \ldots, d-1\}$, $\varphi(\Z^d) \subseteq \Z^d$. The inverse mapping to $\varphi$ is given by $e_i \mapsto e_i + \frac{c_i}{s_i}e_d$ for 
$i \in \{1, \ldots, d-1\}$ and $e_d \mapsto e_d$, which shows that $\varphi(\Z^d) = \Z^d$. Thus, $\varphi$ is a unimodular transformation.
Clearly, $\varphi(P) = S^d_{m+1}$. This proves that, up to unimodular equivalence, $P$ is a uniquely determined simplex, namely $P \cong S^d_{m+1}$.
\end{proof}

\begin{proof}[Proof of Corollary~\ref{cor_vol_lpf}]
Corollary~\ref{cor_vol_lpf} can be obtained by replacing the bound on the lattice diameter used in
the proof of the volume bound in \cite{MR2855866} by the sharp bound given in Theorem~\ref{lpf_ld_bound}.
We follow the lines of the proof in \cite{MR2855866}. 
It can be shown that $P$ is a subset of an integral polytope $Q$ whose number of interior integral points is between $1$ and $G(P) := |P \cap \Z^d|$. 
Since a $d$-dimensional   integral polytope with $k \in \N$ interior integral points is known to have volume of order $k 2^{2^{2d+o(d)}}$ (see \cite{MR1996360}),
we conclude $\vol(P) \le \vol(Q) \le G(P) 2^{2^{2 d + o(d)}}$. Thus, if we use the sharp bound on $\ld{P}$ and the implied bound $G(P) \le (\ld{P} + 1)^d \le 2^{d2^{d-1}}$ 
presented at page~\pageref{lpe_inequalities}, we arrive at the bound in Corollary~\ref{cor_vol_lpf}. 
\end{proof}

\begin{remark}\thmtitle{Rationality assumption in the definition of $\Plm$}
We remark that if $P+g$ is not lattice-free for all rational lines $g$ passing through the origin, then $P+g$ is also not lattice-free for all lines $g$ 
passing through the origin (not necessarily rational ones). This follows directly from the facts that every lattice-free set is contained in an 
inclusion-maximal lattice-free set and that the recession cone of every inclusion-maximal lattice-free set is a linear space spanned by rational vectors 
(for more information see \cite[\S 3]{MR1114315} or \cite[Theorem 1]{MR3027668}). 
Thus, the rationality assumption on $g$ in Theorem~\ref{lpf_ld_bound} is not relevant for the definition of $\Plm$. 
\end{remark}

\section{Proof of results from toric geometry (Theorems~\ref{ag1} and \ref{ag2})}\label{sec:ag}

\begin{proof}[Proof of Theorems~\ref{ag1}~and~\ref{ag2}.]
We will assume that $d \ge 4$, since these results are known to hold for $d\le3$; see \cite{MR2760660}. Let $X$ be a $d$-dimensional $\Q$-factorial toric Fano variety with Picard number one and at most canonical singularities. In this case, 
the associated fan is spanned by the faces of an integral simplex $S \subseteq \R^d$ with the origin as its only interior integral point. We note that, in general,
$S^*$ does not have to be an integral simplex.

By the toric dictionary (see \cite{toricbook}), 
\[(-K_X)^d = d!\, \vol_\Z(S^{\ast}) \le 2 (s_d-1)^2,\]
where the inequality follows from Theorem~\ref{thm_dual_face_volumes} (with $l=d$).
 Moreover, equality holds if and only if $S^* \cong S^d_1$. As explained in \cite{ben} (in particular, Theorem~3.3 and Proposition~4.4), this yields  
$X \cong \P\left(\frac{2 (s_d-1)}{s_1}, \ldots, \frac{2 (s_d-1)}{s_{d-1}}, 1, 1\right)$. This proves Theorem~\ref{ag1}.

Regarding Theorem~\ref{ag2}, it is well-known (see \cite{Lat96}) that the maximal anticanonical degree $(-K_X).C$ of a torus-invariant integral curve $C$ on $X$ equals 
\[\max_{F \in \cF_1(S^{\ast})} \vol_\Z(F).\]
Again, Theorem~\ref{thm_dual_face_volumes} (with $l=1$) yields that this value is at most $2 (s_d - 1)$. Here, the equality case is also uniquely determined by the same $X$ as 
in Theorem~\ref{ag1}(c).
\end{proof}

% \bibliographystyle{amsalpha}
% \bibliography{lit2}

\begin{thebibliography}{CDPDS{\etalchar{+}}12}

\bibitem[Ave12]{MR2967480}
G.~Averkov, \emph{On the size of lattice simplices with a single interior
  lattice point}, SIAM J. Discrete Math. \textbf{26} (2012), no.~2, 515--526.

\bibitem[Ave13]{MR3027668}
\bysame, \emph{A proof of {L}ov\'asz's theorem on maximal lattice-free sets},
  Beitr. Algebra Geom. \textbf{54} (2013), no.~1, 105--109.

\bibitem[AWW11]{MR2855866}
G.~Averkov, C.~Wagner, and R.~Weismantel, \emph{Maximal lattice-free polyhedra:
  finiteness and an explicit description in dimension three}, Math. Oper. Res.
  \textbf{36} (2011), no.~4, 721--742.

\bibitem[Bal71]{Balas71}
E.~Balas, \emph{Intersection cuts --- a new type of cutting planes for integer
  programming}, Operations Res. \textbf{19} (1971), 19--39.

\bibitem[Bar02]{MR1940576}
A.~Barvinok, \emph{A {C}ourse in {C}onvexity}, Graduate Studies in Mathematics,
  vol.~54, American Mathematical Society, Providence, RI, 2002.

\bibitem[Bat94]{MR1269718}
V.~V. Batyrev, \emph{Dual polyhedra and mirror symmetry for {C}alabi-{Y}au
  hypersurfaces in toric varieties}, J. Algebraic Geom. \textbf{3} (1994),
  no.~3, 493--535.

\bibitem[BB92]{MR1166957}
A.~A. Borisov and L.~A. Borisov, \emph{Singular toric {F}ano three-folds}, Mat.
  Sb. \textbf{183} (1992), no.~2, 134--141.

\bibitem[Bli14]{MR1500976}
H.~F. Blichfeldt, \emph{A new principle in the geometry of numbers, with some
  applications}, Trans. Amer. Math. Soc. \textbf{15} (1914), no.~3, 227--235.

\bibitem[Buc08]{fake}
W.~Buczynska, \emph{Fake weighted projective spaces},
  http://arxiv.org/abs/0805.1211, 2008.

\bibitem[Cas97]{MR1434478}
J.~W.~S. Cassels, \emph{An {I}ntroduction to the {G}eometry of {N}umbers}, Classics
  in Mathematics, Springer-Verlag, Berlin, 1997, Corrected reprint of the 1971
  edition.

\bibitem[CDPDS{\etalchar{+}}12]{arxiv:math/1211.0388}
M.~Conforti, A.~Del~Pia, M.~Di~Summa, Y.~Faenza, and R.~Grappe, \emph{Reverse
  {C}hv{\'a}tal-{G}omory rank}, in M. Goemans, J. Correa (Eds.), Proceedings of 
  the XVI International Conference on Integer Programming and Combinatorial 
  Optimization (IPCO), Lecture Notes in Computer Science 7801, 
  Springer-Verlag (2013), pp. 133-144.

\bibitem[CLS11]{toricbook}
D.~A. Cox, J.~B. Little, and H.~K. Schenck, \emph{Toric {V}arieties}, Graduate
  Studies in Mathematics, vol. 124, American Mathematical Society, Providence,
  RI, 2011.

\bibitem[Cur22]{MR1520110}
D.~R. Curtiss, \emph{On {K}ellogg's {D}iophantine {P}roblem}, Amer. Math.
  Monthly \textbf{29} (1922), no.~10, 380--387.

\bibitem[DPW12]{MR2968262}
A.~Del~Pia and R.~Weismantel, \emph{On convergence in mixed integer
  programming}, Math. Program. \textbf{135} (2012), no.~1-2, Ser. A, 397--412.

\bibitem[Erd50]{MR0043117}
P.~Erd{\H o}s, \emph{On a {D}iophantine equation}, Mat. Lapok \textbf{1}
  (1950), 192--210.

\bibitem[GL87]{MR893813}
P.~M. Gruber and C.~G. Lekkerkerker, \emph{Geometry of {N}umbers}, second ed.,
  North-Holland Mathematical Library, vol.~37, North-Holland Publishing Co.,
  Amsterdam, 1987.

\bibitem[Hen83]{MR688412}
D.~Hensley, \emph{Lattice vertex polytopes with interior lattice points},
  Pacific J. Math. \textbf{105} (1983), no.~1, 183--191.

\bibitem[HLP52]{MR0046395}
G.~H. Hardy, J.~E. Littlewood, and G.~P{\'o}lya, \emph{Inequalities},
  Cambridge, at the University Press, 1952, 2d ed.

\bibitem[IK87]{IK_original}
O.~Izhboldin and L.~Kurliandchik, \emph{Partition of unity}, Kvant, no.~7,
  1987, pp.~48--52 (in Russian).

\bibitem[IK95]{MR1363298}
\bysame, \emph{Unit fractions}, Proceedings of the {S}t.\ {P}etersburg
  {M}athematical {S}ociety, {V}ol.\ {III} (Providence, RI), Amer. Math. Soc.
  Transl. Ser. 2, vol. 166, Amer. Math. Soc., 1995, pp.~193--200.

\bibitem[Kas09]{MR2549542}
A.~M. Kasprzyk, \emph{Bounds on fake weighted projective space}, Kodai Math. J.
  \textbf{32} (2009), no.~2, 197--208.

\bibitem[Kas10]{MR2760660}
\bysame, \emph{Canonical toric {F}ano threefolds}, Canad. J. Math. \textbf{62}
  (2010), no.~6, 1293--1309.

\bibitem[Kas13]{fake2}
\bysame, \emph{Classifying terminal weighted projective space},
  http://arXiv:1304.3029v1, 2013.

\bibitem[KL88]{MR970611}
R.~Kannan and L.~Lov{\'a}sz, \emph{Covering minima and lattice-point-free
  convex bodies}, Ann. of Math. (2) \textbf{128} (1988), no.~3, 577--602.

\bibitem[Lat96]{Lat96}
R.~Laterveer, \emph{{Linear systems on toric varieties}}, Tohoku Math. J.
  \textbf{48} (1996), 451--458.

\bibitem[Law91]{MR1116368}
J.~Lawrence, \emph{Finite unions of closed subgroups of the {$n$}-dimensional
  torus}, Applied geometry and discrete mathematics, DIMACS Ser. Discrete Math.
  Theoret. Comput. Sci., vol.~4, Amer. Math. Soc., Providence, RI, 1991,
  pp.~433--441.

\bibitem[Lov89]{MR1114315}
L.~Lov{\'a}sz, \emph{Geometry of numbers and integer programming}, Mathematical
  programming ({T}okyo, 1988), Math. Appl. (Japanese Ser.), vol.~6, SCIPRESS,
  Tokyo, 1989, pp.~177--201.

\bibitem[LZ91]{MR1138580}
J.~C. Lagarias and G.~M. Ziegler, \emph{Bounds for lattice polytopes containing
  a fixed number of interior points in a sublattice}, Canad. J. Math.
  \textbf{43} (1991), no.~5, 1022--1035.

\bibitem[Mah39]{MR0001242}
K.~Mahler, \emph{Ein \"{U}bertragungsprinzip f\"ur konvexe {K}\"orper}, \v
  Casopis P\v est. Mat. Fys. \textbf{68} (1939), 93--102.

\bibitem[Nil05]{gorst}
B.~Nill, \emph{Gorenstein toric {F}ano varieties}, Manuscripta Math.
  \textbf{116} (2005), no.~2, 183--210.

\bibitem[Nil07]{ben}
\bysame, \emph{Volume and lattice points of reflexive simplices}, Discrete
  Comput. Geom. \textbf{37} (2007), no.~2, 301--320.

\bibitem[NZ11]{MR2832401}
B.~Nill and G.~M. Ziegler, \emph{Projecting lattice polytopes without interior
  lattice points}, Math. Oper. Res. \textbf{36} (2011), no.~3, 462--467.

\bibitem[Pik01]{MR1996360}
O.~Pikhurko, \emph{Lattice points in lattice polytopes}, Mathematika
  \textbf{48} (2001), no.~1-2, 15--24 (2003).

\bibitem[Rab89a]{MR1039134}
S.~Rabinowitz, \emph{A census of convex lattice polygons with at most one
  interior lattice point}, Ars Combin. \textbf{28} (1989), 83--96.

\bibitem[Rab89b]{MR1030772}
\bysame, \emph{A theorem about collinear lattice points}, Utilitas Math.
  \textbf{36} (1989), 93--95.

% \bibitem[Roc70]{MR0274683}
% R.~Tyrrell Rockafellar, \emph{Convex analysis}, Princeton Mathematical Series,
%   No. 28, Princeton University Press, Princeton, N.J., 1970.

\bibitem[Roc97]{MR1451876}
R.~T. Rockafellar, \emph{Convex {A}nalysis}, Princeton Landmarks in Mathematics,
  Princeton University Press, Princeton, NJ, 1997, Reprint of the 1970
  original, Princeton Paperbacks.

\bibitem[Saw54]{MR0061139}
D.~B. Sawyer, \emph{The lattice determinants of asymmetrical convex regions},
  J. London Math. Soc. \textbf{29} (1954), 251--254.

\bibitem[Sch86]{MR874114}
A.~Schrijver, \emph{Theory of {L}inear and {I}nteger {P}rogramming},
  Wiley-Interscience Series in Discrete Mathematics, John Wiley \& Sons Ltd.,
  Chichester, 1986, A Wiley-Interscience Publication.

\bibitem[Sch93]{MR1216521}
R.~Schneider, \emph{Convex {B}odies: {T}he {B}runn-{M}inkowski {T}heory},
  Encyclopedia of Mathematics and its Applications, vol.~44, Cambridge
  University Press, Cambridge, 1993.

\bibitem[Sou05]{arxiv:math/0502247v1}
K.~Soundararajan, \emph{Approximating 1 from below using $n$ egyptian
  fractions}, Preprint arXiv:math/0502247v1, 2005.

\bibitem[ZPW82]{MR651251}
J.~Zaks, M.~A. Perles, and J.~M. Wills, \emph{On lattice polytopes having
  interior lattice points}, Elem. Math. \textbf{37} (1982), no.~2, 44--46.

\end{thebibliography}

\newcommand{\etalchar}[1]{$^{#1}$}
\providecommand{\bysame}{\leavevmode\hbox to3em{\hrulefill}\thinspace}
\providecommand{\MR}{\relax\ifhmode\unskip\space\fi MR }
% \MRhref is called by the amsart/book/proc definition of \MR.
\providecommand{\MRhref}[2]{%
  \href{http://www.ams.org/mathscinet-getitem?mr=#1}{#2}
}
\providecommand{\href}[2]{#2}

\end{document}